\documentclass[12pt]{amsart}

\usepackage{amsmath,amssymb,amsfonts}
\usepackage{amsthm}
\usepackage{mathrsfs}
\usepackage{tikz}
\usepackage{tikz-cd}
\usepackage[margin=1cm]{geometry}

\usetikzlibrary{arrows.meta,positioning}

\usetikzlibrary{calc,3d,shapes,arrows,decorations.markings}
\tikzset{midarrow/.style = {postaction=decorate, decoration={markings,mark=at position .5 with \arrow{stealth}}}}

\newcommand{\Obj}{\operatorname{Obj}}
\newcommand{\Ee}{\mathcal{E}}

\title[Nuclear dimension and pure infiniteness for higher rank graph $C^*$-algebras]
{Nuclear dimension, pure infiniteness and real rank for higher rank graph $C^*$-algebras}

\author{David Pask}

\address{College of Science and Engineering, James Cook University,
Townsville, QLD 4811, Australia}
\email{david.a.pask@gmail.com}

\keywords{Higher-rank graph, $k$-graph, $C^*$-algebra, Cuntz--Krieger algebra,
nuclear dimension, pure infiniteness, real rank zero, extremal richness,
$\mathcal O_\infty$-stability, Jiang--Su stability, stable rank,
chain conditions, Noetherian, Artinian, gauge-invariant ideal, maximal tail,
generalised cycle, strong aperiodicity}

\subjclass[2020]{Primary 46L05; Secondary 46L35, 46L55, 46L80}

\date{\today}

\theoremstyle{plain}
\newtheorem{theorem}{Theorem}[section]
\newtheorem{lemma}[theorem]{Lemma}
\newtheorem{proposition}[theorem]{Proposition}
\newtheorem{corollary}[theorem]{Corollary}

\theoremstyle{definition}
\newtheorem{definition}[theorem]{Definition}
\newtheorem{example}[theorem]{Example}
\newtheorem{examples}[theorem]{Examples}

\theoremstyle{remark}
\newtheorem{remark}[theorem]{Remark}

\newcommand{\N}{\mathbb{N}}

\newcommand{\Prim}{\mathrm{Prim}}

\begin{document}

\begin{abstract}
We study the structure and regularity of higher rank graph $C^*$-algebras, with
particular emphasis on their nuclear dimension. For a row-finite, locally convex
$k$-graph $\Lambda$ with no sources, we characterise pure infiniteness of
$C^*(\Lambda)$ in terms of generalised cycles, maximal tails, and strong
aperiodicity, and we relate these conditions to topological dimension zero of the
primitive ideal space and to the structure of gauge-invariant ideals. Our main
application is that whenever $C^*(\Lambda)$ is purely infinite of topological
dimension zero---in particular whenever its ideal lattice is finite---it is
strongly purely infinite, $\mathcal O_\infty$-stable, and of nuclear dimension
one, \emph{even when $C^*(\Lambda)$ is not simple}. This extends to the
non-simple, higher-rank setting the nuclear-dimension-one computation known for
simple UCT-Kirchberg $2$-graph algebras. Along the way we refine and correct
several results in the existing graph $C^*$-algebra literature.
\end{abstract}

\maketitle

\section{Introduction}\label{sec:intro}

Higher-rank graph $C^*$-algebras were introduced by Kumjian and Pask
\cite{KP2} as a common generalisation of the Cuntz--Krieger algebras, the
graph $C^*$-algebras of \cite{KPR,KPRR,BPRS}, and the rank-$k$ Cuntz--Krieger
algebras arising from buildings. To a $k$-graph $\Lambda$---a countable
category carrying a degree functor $d\colon\Lambda\to\N^k$ with a unique
factorisation property---one associates the universal $C^*$-algebra
$C^*(\Lambda)$ generated by a Cuntz--Krieger $\Lambda$-family. This class is
broad enough to contain irrational rotation algebras, Bunce--Deddens algebras
and a wide range of Kirchberg algebras \cite{PRRS}, yet combinatorial enough
that delicate structural properties can be read off the underlying graph.
Making that dictionary precise---between the combinatorics of $\Lambda$ and
the structure of $C^*(\Lambda)$---has been a central theme since
\cite{KP2,RSY1,KaP}.

A basic dividing line in the structure theory of nuclear $C^*$-algebras runs
between the stably finite and the purely infinite. For \emph{simple} $k$-graph
algebras, pure infiniteness is governed by aperiodicity and cofinality
\cite{S,PSS4}; in the non-simple setting, Evans and Sims \cite{ES} introduced
\emph{generalised cycles} and showed that their entrances control the presence
of infinite projections (see also \cite{HS,PRRS}). Pure infiniteness, in the
sense of Kirchberg and R\o rdam, is moreover the gateway to classification: a
separable, nuclear, strongly purely infinite algebra in the bootstrap class is
$\mathcal O_\infty$-stable \cite{KR}, of nuclear dimension one \cite{BGSW}, and
hence classified by ideal-related $KK$-theory. Our first aim is to give a clean,
graph-level characterisation of pure infiniteness for a row-finite, locally
convex $k$-graph with no sources, and to situate it within the gauge-invariant
ideal theory of \cite{KaP,PSS2}.

Our second aim---and the point we most wish to stress---is the resulting
computation of the \emph{nuclear dimension} of $C^*(\Lambda)$. Nuclear
dimension, introduced by Winter and Zacharias, is the central regularity
invariant of the Elliott classification programme, and its finiteness is one of
the equivalent forms of the Toms--Winter regularity conjecture. We show
(Proposition~\ref{prop:Oinfty-absorption}) that whenever $C^*(\Lambda)$ is purely
infinite of topological dimension zero---in particular whenever $H(\Lambda)$ is
finite---it is strongly purely infinite and of nuclear dimension one,
\emph{even when $C^*(\Lambda)$ is not simple}. This extends to the higher-rank,
non-simple setting the nuclear-dimension-one computation obtained for simple
UCT-Kirchberg $2$-graph algebras by Ruiz, Sims and S\o rensen \cite{RSS}; the
value one comes from the theorem of Bosa, Gabe, Sims and White that separable,
nuclear, $\mathcal O_\infty$-stable $C^*$-algebras have nuclear dimension one
\cite{BGSW} (finiteness of the nuclear dimension of strongly purely infinite
algebras being due to Szab\'o \cite{Sz}). What our combinatorial analysis
supplies is precisely the input those theorems need: a graph-level guarantee of
strong pure infiniteness, hence of $\mathcal O_\infty$-stability, for a large
class of non-simple $k$-graph algebras.

Beyond pure infiniteness we study the regularity properties that make these
algebras tractable for classification: real rank zero, extremal richness in the
sense of Brown and Pedersen \cite{BP,BP2}, the chain conditions (Noetherian and
Artinian) on the ideal lattice, stable rank one, and $\mathcal Z$- and
$\mathcal O_\infty$-stability. A recurring principle is that, once every ideal
of $C^*(\Lambda)$ is gauge invariant, each of these properties can be controlled
combinatorially through the lattice $H(\Lambda)$ of saturated hereditary subsets
of $\Lambda^0$ and the maximal tails of $\Lambda$.

\subsection*{Techniques}
Three tools underpin our arguments. The first is the gauge-invariant ideal
theory of \cite{RSY1,KaP}: for a row-finite, locally convex $k$-graph the map
$H\mapsto I_H$ is a lattice isomorphism from the saturated hereditary subsets
onto the gauge-invariant ideals, with quotients again of the form
$C^*(\Gamma(\Lambda\setminus H))$ (Theorem~\ref{thm:gauge-ideals}). The second is
strong aperiodicity: we show (Theorem~\ref{thm:gauge-invariant}) that it is
equivalent both to every generalised cycle in each maximal tail having an
entrance and to every ideal of $C^*(\Lambda)$ being gauge invariant, which
reduces the study of the entire ideal lattice to the combinatorics of
$H(\Lambda)$. The third is a systematic reduction to subquotients: inducting
along a maximal chain in $H(\Lambda)$, we assemble global properties of
$C^*(\Lambda)$ from those of the simpler algebras $C^*(\Gamma(\Lambda\setminus H))$
(Theorems~\ref{thm:RR0-finite-H} and~\ref{thm:approx}). Into this scheme we feed
the external structural inputs of Kirchberg--R\o rdam \cite{KR},
Pasnicu--R\o rdam \cite{PR} and Brown--Pedersen \cite{BP,BP2,BP3}.

\subsection*{Main results}
Our first main result, Theorem~\ref{thm:pure-inf}, characterises pure
infiniteness: for a row-finite, locally convex $k$-graph $\Lambda$, the algebra
$C^*(\Lambda)$ is purely infinite in the sense of Kirchberg--R\o rdam if and only
if every vertex projection is properly infinite; equivalently, every generalised
cycle in each maximal tail has an entrance and each vertex of each maximal tail
is connected to by a generalised cycle; equivalently again, $\Lambda$ is
strongly aperiodic and each maximal-tail vertex is reached by a generalised
cycle. The equivalence of the entrance condition with strong aperiodicity, and
with the gauge invariance of every ideal, is Theorem~\ref{thm:gauge-invariant}.

For real rank we prove (Theorem~\ref{thm:RR0-finite-H}) that if $\Lambda$ is
strongly aperiodic, each maximal-tail vertex is reached by a generalised cycle,
and $H(\Lambda)$ is finite, then $C^*(\Lambda)$ is purely infinite of
topological dimension zero, and has real rank zero precisely when it is
$K_0$-liftable; for $k=2$ the latter is the homological condition on the
connectivity matrices of \cite{PSS2}. Combining this with the permanence of
extremal richness, such algebras are extremally rich
(Theorems~\ref{thm:extreme-rich-structure} and~\ref{thm:RR0-extreme}).
Separately, when every primitive quotient is AF, $C^*(\Lambda)$ has stable
rank one (Theorem~\ref{thm:stable-rank}). Section~\ref{sec:chain} records the
accompanying chain conditions: topological dimension zero of
$\Prim(C^*(\Lambda))$ forces both the ascending and descending chain conditions
on ideals (Theorem~\ref{thm:chain-conditions}).

In Section~\ref{sec:trichotomy} we show that $k$-graph algebras populate all
three branches of the finite-ideal trichotomy: a Noetherian separable
$C^*$-algebra is purely infinite, stably finite, or neither, and these classes
are mutually exclusive (Theorem~\ref{thm:trichotomy}); we exhibit row-finite,
locally convex $k$-graphs realising each branch. From this analysis we deduce
two absorption theorems. When $C^*(\Lambda)$ is purely infinite and has
topological dimension zero---in particular whenever $H(\Lambda)$ is
finite---it is strongly purely infinite and $\mathcal O_\infty$-stable,
$C^*(\Lambda)\cong C^*(\Lambda)\otimes\mathcal O_\infty$
(Proposition~\ref{prop:Oinfty-absorption}); and under the chain conditions
$C^*(\Lambda)$ is $\mathcal Z$-stable (Corollary~\ref{cor:Z-stable}). In the
purely infinite, topological-dimension-zero case these algebras are moreover of
nuclear dimension one, even when they are not simple
(Proposition~\ref{prop:Oinfty-absorption}).

In the course of these arguments we clarify and, where necessary, correct the
hypotheses of several results in the graph and higher-rank graph literature,
flagging each emendation at the point where it occurs.

\subsection*{Comparison with earlier work of Pask, Sierakowski and Sims}
The three most closely related antecedents are the papers \cite{PSS2,PSS3,PSS4}
of the present author with Sierakowski and Sims, and it is worth making the
relationship precise.

The paper \cite{PSS3} treats the \emph{simple} case: for a $k$-graph whose
$C^*$-algebra is unital and simple it establishes a dichotomy between stable
finiteness and pure infiniteness, and gives necessary and sufficient
conditions, in terms of the underlying $k$-graph, for such an algebra to be
purely infinite. Theorem~\ref{thm:pure-inf} is the non-simple counterpart:
for an arbitrary row-finite, locally convex $k$-graph it characterises pure
infiniteness of $C^*(\Lambda)$ through the generalised cycles and maximal tails
of $\Lambda$, the simple case of \cite{PSS3} corresponding to
$H(\Lambda)=\{\emptyset,\Lambda^0\}$.

The paper \cite{PSS2} shows that strong aperiodicity is equivalent to
topological dimension zero of $C^*(\Lambda)$, and that a $C^*$-algebra of
topological dimension zero is purely infinite precisely when all of its vertex
projections are properly infinite; for purely infinite $2$-graph algebras it
characterises real rank zero as topological dimension zero together with a
homological condition on the adjacency matrices, and it exhibits strongly
purely infinite $2$-graph algebras of topological dimension zero that fail to
have real rank zero. Theorem~\ref{thm:pure-inf} sharpens the first of these
equivalences: since pure infiniteness already forces the entrance condition,
hence strong aperiodicity and thus topological dimension zero, the equivalence
between pure infiniteness and proper infiniteness of the vertex projections
holds with no separate dimension hypothesis, and is supplemented by graph-level
criteria valid for every $k$. Theorem~\ref{thm:RR0-finite-H} complements the
real-rank results of \cite{PSS2} in a different direction: it yields real rank
zero for all $k$, and for algebras with mixed AF and purely infinite
subquotients, avoiding the $2$-graph adjacency-matrix condition, at the cost of
assuming $H(\Lambda)$ finite. The examples of \cite{PSS2} show that this
reachability-and-finiteness hypothesis cannot simply be dropped.

Finally, \cite{PSS4} computes the stable rank of $C^*(\Lambda)$ for
\emph{finite} $k$-graphs that either contain no cycle with an entrance or are
cofinal, and determines exactly which finite locally convex $k$-graphs give
unital stably finite algebras, the stably finite ones being direct sums of
matrix algebras over tori of dimension at most $k$. We work throughout with
row-finite, locally convex $k$-graphs, so that infinite vertex sets are
permitted, and we use the local computations of
\cite[Proposition~2.7, Theorem~2.5]{PSS4} as inputs. Our stable-rank statement
(Theorem~\ref{thm:stable-rank}) is complementary rather than stronger: it
isolates the branch on which the stable rank equals one---namely when every
primitive quotient is AF---and does not reproduce the exact stable-rank
values, including those exceeding one and the value $\infty$ in the cofinal
purely infinite case, obtained in \cite{PSS4}.

\subsection*{Organisation}
Section~\ref{sec:background} fixes notation and recalls the gauge-invariant
ideal theory, cofinality and the primitive ideal space. Section~\ref{sec:pure-inf}
develops generalised cycles, strong aperiodicity, and the characterisation of
pure infiniteness. Section~\ref{sec:real-rank} treats real rank zero and the
essential subgraph. Section~\ref{sec:extreme} develops extremal richness, with
worked AF, purely infinite and mixed examples. Section~\ref{sec:chain} collects
the chain conditions, stable rank and $K$-theory consequences, and
Section~\ref{sec:trichotomy} establishes the trichotomy and the absorption
theorems, including that the purely infinite algebras of topological dimension
zero have nuclear dimension one, even when they are not simple
(Proposition~\ref{prop:Oinfty-absorption}).

\section{Background}\label{sec:background}

Let $\Prim(A)$ denote the set of primitive ideals in a $C^*$-algebra $A$. For each
ideal $I\subset A$, the \emph{hull} of $I$ is

\[
\mathrm{hull}(I) := \{ P\in\Prim(A) : I \subset P\},
\]

and for each subset $S\subset\Prim(A)$, the \emph{kernel} of $S$ is

\[
\ker(S) := \bigcap_{P\in S} P.
\]

The Jacobson topology on $\Prim(A)$ is determined by the closure operation

\[
\overline{S} := \mathrm{hull}(\ker(S)),\qquad S\subset\Prim(A).
\]

\begin{definition}
A $C^*$-algebra $A$ has \emph{topological dimension zero} if $\Prim(A)$, endowed
with the Jacobson topology, has a basis of compact open sets.
\end{definition}

If $A$ has no unit, we write $\tilde{A}$ for its unitisation, and $A_+$ for the
positive elements of $A$.

\subsection{Higher rank graphs}

For $k\geq 0$, a \emph{$k$-graph} (or rank-$k$ graph) is a nonempty countable small category $\Lambda$
equipped with a functor $d:\Lambda\to\N^k$ satisfying the \emph{factorisation property}:
for all $\lambda\in\Lambda$ and $m,n\in\N^k$ with $d(\lambda)=m+n$ there exist
unique $\mu,\nu\in\Lambda$ such that $d(\mu)=m$, $d(\nu)=n$ and $\lambda=\mu\nu$.
We write $\Lambda^n := d^{-1}(n)$ and $\Lambda^0$ for the vertices.

For $m,n\in\N^k$ we write $m\vee n$ for the coordinatewise maximum, and define
$m\leq n$ if $m_i\leq n_i$ for all $i$.

\begin{definition}
Let $\Lambda$ be a $k$-graph.
\begin{itemize}
\item $\Lambda$ is \emph{row-finite} if $v\Lambda^n$ is finite for each
      $v\in\Lambda^0$ and $n\in\N^k$.
\item $\Lambda$ has \emph{no sources} if $v\Lambda^{\varepsilon_i}\neq\emptyset$ for all
      $v\in\Lambda^0$ and $i\in\{1,\dots,k\}$.
\item $\Lambda$ is \emph{finite} if $\Lambda^0$ is finite and $\Lambda$ is row-finite.
\item $\Lambda$ is \emph{strongly connected} if $u\Lambda v\neq\emptyset$ for all
      $u,v\in\Lambda^0$.
\end{itemize}
\end{definition}

For $\lambda\in\Lambda$ and $0\leq m\leq n\leq d(\lambda)$ we write
$\lambda(m,n)$ for the unique subpath with $d(\lambda(m,n))=n-m$ and
$\lambda=\lambda(0,m)\lambda(m,n)\lambda(n,d(\lambda))$. We write
$\lambda(n):=\lambda(n,n)=s(\lambda(0,n))$.

\begin{definition}[Minimal common extensions]
Let $\Lambda$ be a $k$-graph. For $\mu,\nu\in\Lambda$ with $r(\mu)=r(\nu)$ we write
\[
\mathrm{MCE}(\mu,\nu):=\mu\Lambda\cap\nu\Lambda\cap\Lambda^{d(\mu)\vee d(\nu)}
=\{\gamma\in\Lambda^{d(\mu)\vee d(\nu)}:\gamma(0,d(\mu))=\mu,\ \gamma(0,d(\nu))=\nu\}
\]
for the set of \emph{minimal common extensions} of $\mu$ and $\nu$. 
\end{definition}

\noindent
The following definition first appears in \cite[Definition 3.9]{RSY1}.

\begin{definition}[Locally convex]
A $k$-graph $\Lambda$ is \textit{locally convex} if for all distinct
$i,j\in\{1,\dots,k\}$ and all $e \in \Lambda^{\varepsilon_i}$ and
$f \in \Lambda^{\varepsilon_j}$ with $r(e) = r(f)$, the sets
$s(e)\Lambda^{\varepsilon_j}$ and $s(f)\Lambda^{\varepsilon_i}$ are nonempty.
For a locally convex $k$-graph $\Lambda$ and 
$n \in \mathbb{N}^k$, we write 
\[
\Lambda^{\leq n} := \{ \lambda \in \Lambda : d(\lambda) \leq n 
\text{ and } s ( \lambda ) \Lambda^{\varepsilon_i} = \emptyset 
\text{ whenever } d  (\lambda ) + \varepsilon_i \leq n \} .
\]
\end{definition}

\noindent
We will often illustrate $k$-graphs as $k$-coloured graphs. We refer to \cite{HRSW}
for the details, but in short there is a one-to-one correspondence between $k$-graphs and
$k$-coloured graphs together with factorisation rules for bi-coloured paths of length~2 satisfying
an associativity condition \cite[Equation~(3.2)]{HRSW}.

\begin{example}[One-vertex rank-two graph]\label{ex:rank2}
Fix $m_1,m_2\geq 2$ and sets $E_1=\{f_1,\dots,f_{m_1}\}$,
$E_2=\{g_1,\dots,g_{m_2}\}$ of degree-$\varepsilon_1$ and degree-$\varepsilon_2$ generators.
Let $\theta:\{1,\dots,m_1\}\times\{1,\dots,m_2\}\to
\{1,\dots,m_2\}\times\{1,\dots,m_1\}$ be a bijection. The monoidal
$2$-graph $F_\theta$ has one vertex, generators $E_1\cup E_2$,
and relations

\[
f_i g_j = g_{j'} f_{i'}\quad\text{whenever}\quad
\theta(i,j)=(j',i').
\]

\end{example}

\begin{example}[A model two-dimensional graph]\label{ex:skew-tikz}
Following \cite{KP2},  let $\Omega_2$ be the category with objects $\Obj \, ( \Omega_2 ) = \mathbb{N}^2$, and
morphisms  $\Omega_2  = \{ (m,n) \in \mathbb{N}^2 \times \mathbb{N}^2 : m \le n \}$;  the range and source maps are given by $r ( m , n ) = m$, $s ( m , n ) = n$ so
that $( \ell,n) = ( \ell,m) (m,n)$.  Let $d : \Omega_k \rightarrow \mathbb{ N}^2$ be defined by $d ( m , n ) = n - m$. It is then straightforward to show that $\Omega_2$ is a row-finite, locally convex $2$-graph. The vertices $\Omega_2^0 = \{ (m,m) : m \in \mathbb{N}^2  \}$
may be identified with $\mathbb{N}^2$, since $(m,m) (m,n) = (m,n) =  (m,n)(n,n)$
for all $m \le n \in \mathbb{N}^2$.
\[
\begin{tikzpicture}[scale=0.7]

\draw[blue, thick,midarrow] (1,0)--(0,0);
\draw[blue, thick,midarrow] (2,0)--(1,0);
\draw[blue, thick,midarrow] (3,0)--(2,0);
\draw[blue, thick,midarrow] (4,0)--(3,0);
\draw[blue,thick,midarrow] (5,0)--(4,0);

\draw[blue, thick,midarrow] (1,1)--(0,1);
\draw[blue, thick,midarrow] (2,1)--(1,1);
\draw[blue, thick,midarrow] (3,1)--(2,1);
\draw[blue, thick,midarrow] (4,1)--(3,1);
\draw[blue,thick,midarrow] (5,1)--(4,1);

\draw[blue, thick,midarrow] (1,2)--(0,2);
\draw[blue, thick,midarrow] (2,2)--(1,2);
\draw[blue, thick,midarrow] (3,2)--(2,2);
\draw[blue, thick,midarrow] (4,2)--(3,2);
\draw[blue,thick,midarrow] (5,2)--(4,2);

\draw[blue, thick,midarrow] (1,3)--(0,3);
\draw[blue, thick,midarrow] (2,3)--(1,3);
\draw[blue, thick,midarrow] (3,3)--(2,3);
\draw[blue, thick,midarrow] (4,3)--(3,3);
\draw[blue,thick,midarrow] (5,3)--(4,3);

\draw[red, dashed, thick,midarrow] (0,1)--(0,0);
\draw[red, dashed, thick,midarrow] (0,2)--(0,1);
\draw[red, dashed, thick,midarrow] (0,3)--(0,2);

\draw[red, dashed, thick,midarrow] (1,1)--(1,0);
\draw[red, dashed, thick,midarrow] (1,2)--(1,1);
\draw[red, dashed, thick,midarrow] (1,3)--(1,2);

\draw[red, dashed, thick,midarrow] (2,1)--(2,0);
\draw[red, dashed, thick,midarrow] (2,2)--(2,1);
\draw[red, dashed, thick,midarrow] (2,3)--(2,2);

\draw[red, dashed, thick,midarrow] (3,1)--(3,0);
\draw[red, dashed, thick,midarrow] (3,2)--(3,1);
\draw[red, dashed, thick,midarrow] (3,3)--(3,2);

\draw[red, dashed, thick,midarrow] (4,1)--(4,0);
\draw[red, dashed, thick,midarrow] (4,2)--(4,1);
\draw[red, dashed, thick,midarrow] (4,3)--(4,2);

\draw[red, dashed, thick,midarrow] (5,1)--(5,0);
\draw[red, dashed, thick,midarrow] (5,2)--(5,1);
\draw[red, dashed, thick,midarrow] (5,3)--(5,2);

\node at (-1.5,2) {$\Omega_{2}$};
\node at (5.5,0) {$\ldots$};
\node at (5.5,1) {$\ldots$};
\node at (5.5,2) {$\ldots$};
\node at (5.5,3) {$\ldots$};
\node at (0,3.5) {$\vdots$};
\node at (1,3.5) {$\vdots$};
\node at (2,3.5) {$\vdots$};
\node at (3,3.5) {$\vdots$};
\node at (4,3.5) {$\vdots$};
\node at (5,3.5) {$\vdots$};

\end{tikzpicture}
\]
\end{example}

\subsection{Higher rank graph $C^*$-algebras}

Let $\Lambda$ be a locally convex, row-finite $k$-graph with no sources.

\begin{definition}[Cuntz--Krieger family]
A \emph{Cuntz--Krieger $\Lambda$-family} in a $C^*$-algebra $B$ is a map
$s:\Lambda\to B$ such that:
\begin{enumerate}
\item $\{s_v : v\in\Lambda^0\}$ is a family of mutually orthogonal projections;
\item $s_\mu s_\nu = s_{\mu\nu}$ whenever $s(\mu)=r(\nu)$;
\item $s_\lambda^* s_\lambda = s_{s(\lambda)}$ for all $\lambda\in\Lambda$;
\item For all $v\in\Lambda^0$ and $n\in\N^k$,

\[
s_v = \sum_{\lambda\in v\Lambda^n} s_\lambda s_\lambda^*.
\]

\end{enumerate}
\end{definition}

\begin{definition}
The \emph{higher rank graph $C^*$-algebra} $C^*(\Lambda)$ is the universal
$C^*$-algebra generated by a Cuntz--Krieger $\Lambda$-family.
\end{definition}

\begin{remark}[Density]\label{rem:dense-span}
The linear span of $\{s_\mu s_\nu^* : s(\mu)=s(\nu)\}$ is a dense
$*$-subalgebra of $C^*(\Lambda)$; in particular $C^*(\Lambda)$ is separable.
\end{remark}

\subsection{Saturated hereditary subsets and gauge-invariant ideals}

\begin{definition}[Saturated and hereditary]\label{def:sat-her}
Let $\Lambda$ be a row-finite $k$-graph with no sources. A subset
$H\subset\Lambda^0$ is:
\begin{itemize}
\item \emph{hereditary} if $v\in H$ and $v\Lambda w\neq\emptyset$ imply $w\in H$;
\item \emph{saturated} if for all $v\in\Lambda^0$ and $n\in\N^k$,

\[
s(v\Lambda^n)\subset H \;\Rightarrow\; v\in H.
\]

\end{itemize}
\end{definition}

For a hereditary $H\subset\Lambda^0$ we write $I_H$ for the ideal in
$C^*(\Lambda)$ generated by $\{s_v : v\in H\}$, and for an ideal
$I\subset C^*(\Lambda)$ we set $H_I := \{ v\in\Lambda^0 : s_v\in I\}$.

We shall refer to this result frequently, the proof may be found in Theorem 5.5]{S} or \cite[Theorem 5.2]{RSY1}.

\begin{theorem}[Gauge-invariant ideals]\label{thm:gauge-ideals}
Let $(\Lambda,d)$ be a row-finite, locally convex $k$-graph. Then:
\begin{enumerate}
\item If $I$ is a nonzero gauge-invariant ideal of $C^*(\Lambda)$, then
      $H_I$ is a nonempty saturated hereditary subset and $I=I_{H_I}$.
\item If $H\subset\Lambda^0$ is saturated hereditary, then $I_H$ is
      gauge invariant and $H=H_{I_H}$.
\item The map $H\mapsto I_H$ is a lattice isomorphism from the lattice
      of saturated hereditary subsets of $\Lambda^0$ onto the lattice
      of closed gauge-invariant ideals of $C^*(\Lambda)$.
\item If $H\subset\Lambda^0$ is saturated hereditary, then the quotient
      $C^*(\Lambda)/I_H$ is canonically isomorphic to $C^*(\Gamma(\Lambda\setminus H))$,
      where

\[
      \Gamma(\Lambda\setminus H) := \big(\Lambda^0\setminus H,\,
      \{\lambda\in\Lambda : s(\lambda)\notin H\},\,r,s\big).
      \]

\end{enumerate}
\end{theorem}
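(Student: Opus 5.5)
The plan follows the standard route of Raeburn--Sims--Yeend: build explicit quotient families, then apply the gauge-invariant uniqueness theorem. Throughout, $\gamma$ is the gauge action of $\mathbb{T}^k$ on $C^*(\Lambda)$, with $\gamma_z(s_\lambda)=z^{d(\lambda)}s_\lambda$. I will use the gauge-invariant uniqueness theorem for row-finite, locally convex $k$-graphs: a homomorphism $\pi\colon C^*(\Lambda)\to B$ that is nonzero on every vertex projection and intertwines $\gamma$ with some action $\beta$ on $B$ is injective.

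First I establish the combinatorics.
\begin{enumerate}
\item For a saturated hereditary $H$, the set $\Gamma:=\Gamma(\Lambda\setminus H)$ is a subcategory closed under factorisation. Hereditariness gives that if $s(\lambda)\notin H$ then every vertex $\lambda(n)$ lies outside $H$. So $\Gamma$ is a $k$-graph, and it is clearly row-finite.
\item $\Gamma$ is locally convex. This follows from local convexity of $\Lambda$ together with saturation of $H$.
\item For $v\notin H$ and each $n$, the Cuntz--Krieger relation in the quotient reads $s_v=\sum_{\lambda\in v\Gamma^{\le n}} s_\lambda s_\lambda^*$ modulo $I_H$. Here one must use the $\Lambda^{\le n}$ form of relation (4), because $\Gamma$ may have sources even though $\Lambda$ does not. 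This is exactly why local convexity, rather than "no sources", is the right hypothesis.
\end{enumerate}

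Next I analyse $I_H$.
\begin{enumerate}
\item Show that $I_H=\overline{\operatorname{span}}\{s_\mu s_\nu^* : s(\mu)=s(\nu)\in H\}$. The right-hand side is a $*$-ideal: products $s_\nu^* s_\alpha$ expand, via minimal common extensions and relation (4), into sums of $s_\sigma s_\tau^*$. Hereditariness keeps the sources in $H$.
\item From this description, $I_H$ is gauge invariant.
\item Show that $H_{I_H}=H$. Build a Cuntz--Krieger $\Gamma$-family in $C^*(\Gamma)$ by sending $s_\lambda$ to $t_\lambda$ when $s(\lambda)\notin H$ and to $0$ otherwise. Checking relation (4) uses the $\Gamma^{\le n}$ decomposition from step 3 above. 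The resulting homomorphism $\pi$ kills $I_H$ but is nonzero on $s_v$ for $v\notin H$, so $s_v\notin I_H$ for such $v$.
\end{enumerate}
This gives (2).

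For (4), first note that the family $\{s_\lambda+I_H : s(\lambda)\notin H\}$ is a Cuntz--Krieger $\Gamma$-family in $C^*(\Lambda)/I_H$. This gives a homomorphism $\phi\colon C^*(\Gamma)\to C^*(\Lambda)/I_H$, which is surjective because $s_\lambda\in I_H$ whenever $s(\lambda)\in H$. It is gauge-equivariant for the induced action on the quotient, which is well defined because $I_H$ is gauge invariant. It is nonzero on vertices by $H_{I_H}=H$. The uniqueness theorem then makes $\phi$ injective, and $\pi$ from the previous paragraph is its inverse.

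For (1), let $I$ be a gauge-invariant ideal. Check directly that $H_I$ is hereditary and saturated: if $s_v\in I$, then $s_w=s_\lambda^*s_vs_\lambda\in I$, and relation (4) gives saturation. Clearly $I_{H_I}\subset I$. The induced map $C^*(\Lambda)/I_{H_I}\cong C^*(\Gamma(\Lambda\setminus H_I))\to C^*(\Lambda)/I$ is equivariant and nonzero on every vertex of $\Gamma$. By uniqueness it is injective, so $I=I_{H_I}$. If $I\neq0$ then $H_I\neq\emptyset$, since otherwise the quotient map would be injective.

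Finally, (3) follows from (1) and (2) together with the observations that $H\subset K$ implies $I_H\subset I_K$, and that $I_{H\cap K}=I_H\cap I_K$ by the spanning description. The main obstacle is step 3 of the combinatorics, the verification of relation (4) for the quotient family on the possibly source-containing $\Gamma$. This is the only place where locally convex combinatorics are genuinely needed, and I would handle it by induction on $|n|$, one coordinate at a time.
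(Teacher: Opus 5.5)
Your proposal is correct and is essentially the Raeburn--Sims--Yeend argument: the quotient $\Gamma(\Lambda\setminus H)$-family, the homomorphism $C^*(\Lambda)\to C^*(\Gamma(\Lambda\setminus H))$ annihilating $I_H$ to get $H_{I_H}=H$, and the gauge-invariant uniqueness theorem to get $I=I_{H_I}$; the paper gives no proof of its own and simply cites \cite[Theorem~5.2]{RSY1}, which is this argument. One correction to your commentary: $\Gamma(\Lambda\setminus H)$ acquires no new sources, because saturation (with $n=\varepsilon_i$) gives, for every $v\notin H$ with $v\Lambda^{\varepsilon_i}\neq\emptyset$, some $e\in v\Lambda^{\varepsilon_i}$ with $s(e)\notin H$, so $v\Gamma^{\le n}=\{\lambda\in v\Lambda^{\le n}: s(\lambda)\notin H\}$ (in particular $\Gamma$ has no sources when $\Lambda$ has none), which makes relation (4) for both of your families immediate and means the step you flag as the main obstacle needs no induction on $|n|$.
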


\subsection{Cofinality}

\begin{definition}[Cofinal]\label{def:cofinal}
Let $\Lambda$ be a row-finite $k$-graph with no sources. We say
$\Lambda$ is \emph{cofinal} if for all $v,w\in\Lambda^0$ there exists
$m\in\N^k$ such that $v\Lambda s(\lambda)\neq\emptyset$ for all
$\lambda\in w\Lambda^m$.
\end{definition}

\begin{lemma}[Cofinality and saturated hereditary subsets]\label{lem:cofinal}
Let $\Lambda$ be a row-finite, locally convex $k$-graph. Then $\Lambda$
is cofinal if and only if the lattice $H(\Lambda)$ of saturated hereditary
subsets is $\{ \emptyset,\Lambda^0\}$.
\end{lemma}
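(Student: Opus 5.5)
The plan is to work with the versions of the definitions adapted to locally convex graphs, since $\Lambda$ may have sources and $w\Lambda^m$ can then be empty. Following \cite{RSY1}, I read saturation as: $s(v\Lambda^{\leq n})\subset H$ for some $n$ implies $v\in H$. I read cofinality as: for all $v,w$ there is $m$ with $v\Lambda s(\lambda)\neq\emptyset$ for every $\lambda\in w\Lambda^{\leq m}$. When $\Lambda$ has no sources, $\Lambda^{\leq n}=\Lambda^n$, so these agree with Definitions~\ref{def:sat-her} and~\ref{def:cofinal}.

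I will use two standard facts about locally convex, row-finite $k$-graphs from \cite{RSY1}:
\begin{enumerate}
\item $w\Lambda^{\leq m}$ is finite and nonempty for every $w$ and $m$.
\item Every $\nu\in u\Lambda^{\leq p+q}$ factors as $\nu=\lambda\mu$ with $\lambda\in u\Lambda^{\leq p}$ and $\mu\in s(\lambda)\Lambda^{\leq q}$.
\end{enumerate}
Fact (2) has a monotonicity consequence. If $K$ is hereditary and $s(u\Lambda^{\leq m})\subset K$, then $s(u\Lambda^{\leq m'})\subset K$ for all $m'\geq m$. Indeed, each path in $u\Lambda^{\leq m'}$ begins with a path in $u\Lambda^{\leq m}$, whose source lies in $K$, and heredity then puts the final source in $K$ as well.

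\emph{Cofinal implies trivial lattice.} Let $H$ be a nonempty saturated hereditary set, pick $v\in H$, and let $w\in\Lambda^0$ be arbitrary. Cofinality gives $m$ such that $v\Lambda s(\lambda)\neq\emptyset$ for all $\lambda\in w\Lambda^{\leq m}$. Heredity then gives $s(w\Lambda^{\leq m})\subset H$, and saturation gives $w\in H$. Hence $H=\Lambda^0$, and $H(\Lambda)=\{\emptyset,\Lambda^0\}$.

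\emph{Trivial lattice implies cofinal.} Fix $v,w$ and set $H_0=\{x\in\Lambda^0: v\Lambda x\neq\emptyset\}$. This set is hereditary by composition of paths, and it is nonempty because it contains $v$. Form its saturation as an increasing union
\[
H_0\subset H_1\subset\cdots,\qquad H_{j+1}=\{u: s(u\Lambda^{\leq n})\subset H_j \text{ for some } n\},
\]
and put $\Sigma=\bigcup_j H_j$. I then check that $\Sigma$ is saturated, which is direct from finiteness of $u\Lambda^{\leq n}$ and the fact that the $H_j$ increase. I also check that $\Sigma$ is hereditary, by induction on $j$ using fact (2) and local convexity. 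This is the argument of \cite[Lemma~5.?]{RSY1}, and I would reprove it in the $\Lambda^{\leq n}$ language. Since $\Sigma\neq\emptyset$, the hypothesis forces $\Sigma=\Lambda^0$, so $w\in H_j$ for some $j$.

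It remains to prove by induction on $j$ that every $u\in H_j$ admits $m$ with $s(u\Lambda^{\leq m})\subset H_0$. For $j=0$ take $m=0$. For the inductive step, let $u\in H_{j+1}$ via $n$. There are finitely many $\lambda\in u\Lambda^{\leq n}$, and each $s(\lambda)$ has some $m_\lambda$. Let $M$ be the maximum of the $m_\lambda$; by monotonicity $M$ works for every $s(\lambda)$ at once. Fact (2) then shows $s(u\Lambda^{\leq n+M})\subset H_0$. Applying this with $u=w$ gives exactly cofinality at the pair $(v,w)$.

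I expect the main obstacle to be the bookkeeping with $\Lambda^{\leq n}$ in the presence of sources. Specifically, this means establishing the factorisation fact (2) and the heredity of the saturation, since both fail without local convexity. I would import these from \cite{RSY1} rather than reprove them from scratch.
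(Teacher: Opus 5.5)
The paper states Lemma~\ref{lem:cofinal} with no proof and no reference, so there is no argument of the paper's to compare against. Your proposal is correct and supplies one.

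Both directions check out.
\begin{itemize}
\item The forward direction is immediate from heredity and saturation.
\item In the converse, you take the saturation $\Sigma=\bigcup_j H_j$ of $H_0=\{x : v\Lambda x\neq\emptyset\}$. Your induction then uses finiteness of $u\Lambda^{\le n}$, monotonicity, and fact~(2) to give $s(u\Lambda^{\le n+M})\subset H_0$. This produces exactly the $m$ that cofinality requires at $(v,w)$.
\end{itemize}

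Your switch to $\Lambda^{\le n}$ is not merely cosmetic. If Definitions~\ref{def:sat-her} and~\ref{def:cofinal} are read literally, with $\Lambda^n$, the lemma is false once sources are allowed. Take the $k$-graph with a single vertex $v$ and no edges. Since $v\Lambda^{\varepsilon_1}=\emptyset$, $v$ is forced into every saturated set, so $\emptyset\notin H(\Lambda)$. Yet $\Lambda$ is cofinal (take $m=0$). With your definitions one gets $H(\Lambda)=\{\emptyset,\Lambda^0\}$, as the lemma requires.

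Three small tidy-ups.
\begin{itemize}
\item Replace the placeholder ``Lemma~5.?'' with the short direct argument for heredity of $H_{j+1}$. Suppose $u\in H_{j+1}$ via $n$, and $\mu\in u\Lambda x$ with $d(\mu)=p$. Then $\mu\nu\in u\Lambda^{\le p+n}$ for every $\nu\in x\Lambda^{\le n}$, because the defining condition for $\Lambda^{\le p+n}$ at $\mu\nu$ reduces to the one for $\Lambda^{\le n}$ at $\nu$. Monotonicity applies since $H_j$ is hereditary by induction. It gives $s(x\Lambda^{\le n})\subset H_j$, hence $x\in H_{j+1}$.
\item The ``maximum'' of the $m_\lambda$ should be the coordinatewise join $\bigvee_\lambda m_\lambda$ in $\mathbb{N}^k$.
\item Local convexity enters only in the first half of fact~(2), namely that $\nu(0,d(\nu)\wedge p)\in\Lambda^{\le p}$. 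The second factor $\nu(d(\nu)\wedge p,d(\nu))$ lies in $\Lambda^{\le q}$ in any $k$-graph. For a concrete failure without local convexity, take a blue edge and a red edge with common range $a$, where the red edge's source receives no blue edge.
\end{itemize}
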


\section{Purely infinite higher rank graph $C^*$-algebras}\label{sec:pure-inf}

\subsection{Infinite and properly infinite elements}

Let $A$ be a $C^*$-algebra and $a,b\in A_+$.

We write $a\precsim b$ if there exists a sequence $(x_n)$ in $A$ such that
$x_n^* b x_n\to a$ in norm. A positive element $a\in A$ is \emph{infinite}
if there exists a nonzero $b\in A_+$ such that $a\oplus b\precsim a$.
If $a$ is nonzero and $a\oplus a\precsim a$, then $a$ is \emph{properly infinite}.

\begin{definition}[Purely infinite]\label{def:pure-inf}
A $C^*$-algebra $A$ is \emph{purely infinite} (in the sense of
Kirchberg--R\o rdam) if it has no characters and if for every pair
$x,y\in A_+$ with $y$ in the closed two-sided ideal generated by $x$,
there exists a sequence $(a_n)$ in $A$ such that $a_n x a_n^*\to y$.
\end{definition}

\subsection{Loops with entrances and generalised cycles}

\noindent
This definition comes from \cite[Definition 8.7]{S}.

\begin{definition}[Loop with entrance]\label{def:loop-entrance}
Let $\Lambda$ be a $k$-graph. A path $\mu\in\Lambda$ is a \emph{loop}
if $s(\mu)=r(\mu)$. It is a \emph{loop with an entrance} if there exists
$\alpha\in s(\mu)\Lambda$ with $d(\mu)\geq d(\alpha)$ and
$\mu(0,d(\alpha))\neq\alpha$.
\end{definition}

\noindent
The definition was first given in \cite[Definition 3.1]{ES}.

\begin{definition}[Generalised cycle]\label{def:gen-cycle}
Let $\Lambda$ be a locally convex $k$-graph. A \emph{generalised cycle}
is a pair $(\mu,\nu)\in\Lambda\times\Lambda$ such that:
\begin{itemize}
\item $\mu\neq\nu$;
\item $s(\mu)=s(\nu)$ and $r(\mu)=r(\nu)$;
\item $\mathrm{MCE}(\mu\alpha,\nu)\neq\emptyset$ for all
      $\alpha\in s(\mu)\Lambda$.
\end{itemize}
An \emph{entrance} to $(\mu,\nu)$ is a path $\alpha\in s(r(\mu))\Lambda$
such that $\mathrm{MCE}(r(\mu)\alpha,\mu)=\emptyset$.
\end{definition}

\subsection{Strong aperiodicity and gauge-invariant ideals}

Here we shall be using the definition of aperiodicity which use notion of the boundary paths of a locally convex row-finite higher rank graph.

\begin{definition}[Boundary path]
For each $m \in (\mathbb{N} \cup \{\infty\})^k$, and $k \ge 1$ we define a $k$-graph
$\Omega_{k,m}$ by $\Omega_{k,m} = \{(p,q) \in \mathbb{N}^k \times \mathbb{N}^k : p \le q \le m\}$ with range map
$r(p,q) = (p,p)$, source map $s(p,q) = (q,q)$, and degree map $d(p,q) = q-p$. We identify $\Omega_{k,m}^0$ with $\{p \in \mathbb{N}^k : p \le m\}$ via the map $(p,p) \mapsto p$. Given a $k$-graph
and $m \in \mathbb{N}^k$ there is a bijection from $\Lambda^m$ to the set of morphisms $x : \Omega_{k,m}
\to \Lambda$, given by $\lambda \mapsto \big((p,q)\mapsto \lambda(p,q)\big)$; the inverse is the
map $x \mapsto x(0,m)$. Thus, for each $m\in \mathbb{N}^k$ we may identify the collection of $k$-graph
morphisms from $\Omega_{k,m}$ to $\Lambda$ with $\Lambda^m$. We extend this idea beyond $m\in
\mathbb{N}^k$ as follows: Given $m \in (\mathbb{N} \cup \{\infty\})^k \backslash \mathbb{N}^k$, we regard each $k$-graph
morphism $x : \Omega_{k,m} \to \Lambda$ as a path of degree $m$ in $\Lambda$ and write $d(x)= m$ and $r(x)= x(0)$; we denote the set of all such paths by $\Lambda^m$. When $m=( \infty , \ldots , \infty )$, we denote $\Omega_{k,m}$ by $\Omega_k$ and we call a path of degree $m$ an \emph{infinite path}. We denote by
$W_\Lambda$ the collection $\bigcup_{m \in (\mathbb{N} \cup \{\infty\})^k} \Lambda^m$ of all paths in
$\Lambda$; our conventions allow us to regard $\Lambda$ as a subset of $W_\Lambda$. We call
$W_\Lambda$ the \emph{path space} of $\Lambda$.  If $\Lambda$ has no sources then $W_\Lambda=\Lambda$

We set
$$\Lambda^{\leq \infty}= \big\{ x\in W_\Lambda: x(n)\Lambda^{\varepsilon_i}=\emptyset\ \textrm{whenever}\  n\leq d(x)\ \textrm{and} \  n_i = d(x)_i\big\},
$$
and for $v\in \Lambda^0$, we define $v\Lambda^{\leq \infty}=\{x\in \Lambda^{\leq \infty}:
r(x)=v\}$. We call $\Lambda^{\leq \infty}$ the \emph{boundary path space} of $\Lambda$. If $\Lambda$ has no sources then $\Lambda^{\leq \infty}=\Lambda^\infty$ as defined in \cite[Definition 2.1]{KP2}.
For $n\in \mathbb{N}^k$ there is a \emph{shift map} $\sigma^n: \{x \in W_\Lambda :
n\leq d(x) \} \to W_\Lambda$ such that $d(\sigma^n(x))=d(x)-n$ and $\sigma^n(x)(p,q)=x(n+p,n+q)$
for $0\leq p \leq q \leq d(x)-n$ where we use the convention $\infty-a=\infty$ for $a \in \mathbb{N}$.
For $x \in W_\Lambda$ and $n\leq d(x)$, we then have $x(0,n)\sigma^n(x) = x$.
\end{definition}

\noindent
The following definition is taken from \cite[Lemma 3.2 (iv)]{RS1} (see also \cite[Appendix A]{RS1}. It can be improved slightly (see \cite[Definition 4.1]{LP}) but it is only a minor improvement).

\begin{definition}[Local periodicity and aperiodicity]\label{def:local-per}
Let $(\Lambda,d)$ be a row-finite $k$-graph. Fix $v\in\Lambda^0$ and
$m\neq n\in\N^k$. We say $\Lambda$ has \emph{local periodicity $(m,n)$
at $v$} if for every boundary path $x\in v \Lambda^{\le \infty}$ we have

\[
m\wedge d(x) = n\wedge d(x)\quad\text{and}\quad
\sigma^{m\wedge d(x)}(x) = \sigma^{n\wedge d(x)}(x).
\]

\noindent
We say $\Lambda$ has \emph{no local periodicity} if it does not have
local periodicity $(m,n)$ at any $v\in\Lambda^0$ and $m\neq n\in\N^k$.
\end{definition}

\begin{definition}[Aperiodicity and strong aperiodicity]\label{def:aperiodic}
Let $\Lambda$ be a row-finite $k$-graph with no sources.
We say $\Lambda$ is \emph{aperiodic} if every vertex $v\in\Lambda^0$
has no local periodicity. We say $\Lambda$ is \emph{strongly aperiodic}
if $\Gamma (\Lambda\setminus H)$ is aperiodic for every saturated hereditary
$H\subset\Lambda^0$.
\end{definition}

\noindent
From \cite[Theorem 3.2, Corollary 3.9]{PSS2} (see also \cite[Theorem 7.2]{S}) we have:

\begin{theorem}[Gauge-invariant ideals and strong aperiodicity]\label{thm:gauge-invariant}
Let $\Lambda$ be a locally convex, row-finite $k$-graph. The following
are equivalent:
\begin{enumerate}
\item $\Lambda$ is strongly aperiodic;
\item every generalised cycle in each maximal tail $M$ has an entrance
      in $M$;
\item every ideal of $C^*(\Lambda)$ is gauge invariant.
\end{enumerate}
\end{theorem}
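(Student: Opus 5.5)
We prove the cycle of implications (1)$\Rightarrow$(3)$\Rightarrow$(2)$\Rightarrow$(1). The two external tools are the Cuntz--Krieger uniqueness theorem for aperiodic row-finite locally convex $k$-graphs (\cite[Theorem 7.2]{S}, in the boundary-path form of \cite{RS1}) and the quotient description in Theorem~\ref{thm:gauge-ideals}(4).

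\emph{Step (1)$\Rightarrow$(3).} Let $I$ be an ideal of $C^*(\Lambda)$ and put $H=H_I$. Since $I$ is closed, $H$ is hereditary. It is also saturated, because $s_v=\sum_{\lambda\in v\Lambda^n}s_\lambda s_\lambda^*$ lies in $I$ as soon as every $s_{s(\lambda)}$ does. Clearly $I_H\subset I$, so we pass to the quotient $C^*(\Lambda)/I_H\cong C^*(\Gamma(\Lambda\setminus H))$. There the image of $I$ contains no vertex projection. By strong aperiodicity, $\Gamma(\Lambda\setminus H)$ is aperiodic. The Cuntz--Krieger uniqueness theorem then says that the quotient map by the image of $I$ is injective, so that image is zero. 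Hence $I=I_H$, which is gauge invariant by Theorem~\ref{thm:gauge-ideals}(2).

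\emph{Step (3)$\Rightarrow$(2).} We argue by contraposition. Suppose $(\mu,\nu)$ is a generalised cycle in a maximal tail $M$ with no entrance in $M$. Let $H=\Lambda^0\setminus M$, which is saturated hereditary, and work in $B=C^*(\Gamma(\Lambda\setminus H))$. Following \cite{ES}, the absence of an entrance gives
\[
s_\mu s_\mu^*=s_{r(\mu)}=s_\nu s_\nu^*\quad\text{in } B,
\]
so that $u=s_\mu s_\nu^*$ is a unitary in the corner $s_{r(\mu)}Bs_{r(\mu)}$. Because $d(\mu)\neq d(\nu)$ (or, when the degrees coincide, because $\mu\neq\nu$ forces a relation incompatible with the gauge action), the ideal generated by $u-s_{r(\mu)}$ is proper and nonzero. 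It is not gauge invariant, since the gauge action rotates $u$ by the character $z^{d(\mu)-d(\nu)}$. Pulling it back to $C^*(\Lambda)$ gives an ideal that is not gauge invariant.

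\emph{Step (2)$\Rightarrow$(1).} Again by contraposition, suppose $\Gamma(\Lambda\setminus H)$ has local periodicity $(m,n)$ at some vertex $v$. First restrict to the maximal tail $M$ generated by $v$, which has the form $\Lambda^0\setminus H'$ with $H'\supset H$; the periodicity persists because boundary paths from $v$ stay in the subgraph. Next take a boundary path $x\in v\Lambda^{\le\infty}$ and set $\mu=x(0,m')$ and $\nu=x(0,n')$ for suitable $m',n'\ge m,n$, aligned so that the periodicity gives $s(\mu)=s(\nu)$ and $r(\mu)=r(\nu)$. Local periodicity forces every extension $\mu\alpha$ to have a common extension with $\nu$, so $(\mu,\nu)$ is a generalised cycle. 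The same periodicity shows it has no entrance in $M$: every path from $r(\mu)$ is determined by the shift relation and hence extends $\mu$.

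The main obstacle is this last construction. It must produce $\mu,\nu$ with matching range and source, and must handle boundary paths of finite degree in directions where $M$ has sources. I would first try to reduce to the case of no sources in the tail via the $m\wedge d(x)$ convention, and otherwise invoke \cite[Corollary 3.9]{PSS2} directly.
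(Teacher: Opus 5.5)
\textbf{The proposal has a genuine gap, and it cannot be closed.} Your (1)$\Rightarrow$(3) is the standard Cuntz--Krieger uniqueness argument and is sound. The paper gives no proof of its own here; it only cites \cite[Theorem~3.2, Corollary~3.9]{PSS2} and \cite[Theorem~7.2]{S}.

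\textbf{The failure at (3)$\Rightarrow$(2).} The cycle breaks at the sentence asserting that the ideal $J$ generated by $u-p$, with $p:=s_{r(\mu)}$, is proper. Properness is not the right property. If $J$ were gauge invariant, applying $\gamma_z$ with $z^{d(\mu)-d(\nu)}\neq 1$ would give $u\in J$ and hence $p=u^*u\in J$. (The degrees always differ, since $\mathrm{MCE}(\mu,\nu)\neq\emptyset$ with $d(\mu)=d(\nu)$ forces $\mu=\nu$.) So what you must show is $p\notin J$, and nothing in your argument gives this. In fact it is false in general, and so is the implication.

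Take a cofinal, aperiodic rank-$2$ Bratteli diagram $\Lambda$ of \cite{PRRS} with $C^*(\Lambda)$ simple, such as the irrational rotation and Bunce--Deddens examples listed in Section~\ref{sec:trichotomy}.
\begin{enumerate}
\item In one colour $j$ the skeleton is a disjoint union of cycles, so each vertex $v$ receives exactly one edge of colour $j$.
\item If the cycle through $v$ has length $\ell$, then $v\Lambda^{\ell\varepsilon_j}=\{\mu\}$ with $\mu$ a loop at $v$. Every infinite path from $v$ begins with $\mu$, so $\mathrm{MCE}(\alpha,\mu)\neq\emptyset$ for all $\alpha\in v\Lambda$.
\item Hence $(\mu,v)$ is a generalised cycle in the maximal tail $\Lambda^0$ with no entrance, both in the paper's sense and in Evans--Sims'. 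Moreover $u=s_\mu$ is a unitary with full spectrum in $s_vC^*(\Lambda)s_v$; this is exactly what \cite{ES} use to rule out AF-ness.
\item But $C^*(\Lambda)$ is simple, so $J=C^*(\Lambda)\ni p$, every ideal is gauge invariant, and $\Lambda$ is strongly aperiodic.
\end{enumerate}
Thus (1) and (3) hold while (2) fails. No argument can close this step. Condition (2) must be changed, for example to the absence of local periodicity in each $\Gamma(M)$, before your cycle of implications can work.

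\textbf{A secondary problem in (2)$\Rightarrow$(1).} Your sketch concludes the wrong thing. ``Every path from $r(\mu)$ extends $\mu$'' is the paper's literal no-entrance condition $s_\mu s_\mu^*=s_{r(\mu)}$, the one you used in (3)$\Rightarrow$(2). Local periodicity does not give it. The pullback of the two-petal rose along $(s,t)\mapsto s+t$ is a one-vertex $2$-graph with local periodicity $(\varepsilon_1,\varepsilon_2)$ and algebra $\mathcal O_2\otimes C(\mathbb T)$. Yet every generalised cycle in it has such an entrance: any $\alpha\neq\mu$ with $d(\alpha)=d(\mu)$.

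What periodicity does yield is the following. Write $Z(\lambda)$ for the infinite paths beginning with $\lambda$, and set $p:=m\vee n$, $\mu:=x(0,m+p)$, $\nu:=x(0,n+p)$. Then $s(\mu)=s(\nu)$ and $Z(\mu)=Z(\nu)$. Indeed, $y(0,m+p)=\mu$ forces $y(0,n)=x(0,n)$ and $y(n,n+p)=y(m,m+p)=x(n,n+p)$, and the reverse inclusion is symmetric. This is precisely ``no entrance'' in the Evans--Sims sense: there is no $\tau\in s(\nu)\Lambda$ with $\mathrm{MCE}(\nu\tau,\mu)=\emptyset$. Now pass to the maximal tail $\{w: w\Lambda x(q)\neq\emptyset\text{ for some }q\}$, which lies in $\Lambda^0\setminus H$ and inherits the periodicity. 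This completes that direction under the Evans--Sims definition, but not under the definition stated in the paper.
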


\noindent
For each subset $H \in \mathcal{H} ( \Lambda )$, let $I_H$ denote the ideal in $C^* (\Lambda )$ generated by $\{ s_v : v \in H \}$. Then it can be shown that for $H \in \mathcal{H} ( \Lambda )$,
\[
I_H = span \{s_\alpha  s_\beta^* : \alpha, \beta \in \Lambda \text{ and } s(\alpha)= s( \beta ) \in  H \}.
\]

\noindent
In particular, $I_H$ is gauge invariant in the sense that $\gamma_z(a)= a$ for all $a \in I_H$ and $z \in \mathbb{T}^k$.

\subsection{Pure infiniteness}

\begin{definition}[Maximal tail]
Following \cite{KaP} we recall the definition of a maximal tail. Let $\Lambda$ be a
row-finite $k$-graph with no sources. A nonempty subset $T$ of $\Lambda^0$ is called a
\emph{maximal tail} if
\begin{itemize}
\item[(MT1)] for all $v_1,v_2\in T$ there exists $w\in T$ such that $v_1 \Lambda w \neq \emptyset$ and $v_2\Lambda w\neq \emptyset$,
\item[(MT2)] for every $v\in T$ and $1\leq i\leq k$ there exist $e \in v \Lambda^{\varepsilon_i}$ such that $s(e) \in T$, and
\item[(MT3)] for all $w\in T$ and $v \in \Lambda^0$ with $v \Lambda w \neq \emptyset$ we  have $v\in T$.
\end{itemize}
\end{definition}

\noindent
Let $\mathcal{M} ( \Lambda )$ denote the collection of maximal tails in $\Lambda$, then $\Lambda^0 \in \mathcal{M} ( \Lambda )$.

The following is a generalisation of \cite[Theorem 2.3]{HS} in the setting of higher rank graphs and their $C^*$-algebras. New arguments are needed due to the lack of an analogue of \cite[Corollary 2.8]{KPR} (``no loops$\,\Rightarrow\,$AF'') for higher rank graphs  and an error in (a)$\Rightarrow$(e) in \cite[Theorem 2.3]{HS}.
\begin{theorem}[Pure infiniteness]\label{thm:pure-inf}
Let $\Lambda$ be a locally convex, row-finite $k$-graph, and consider the
following conditions:
\begin{enumerate}
    \item\label{pi:a}
    Every nonzero hereditary $C^*$-subalgebra in every quotient of
    $C^*(\Lambda)$ contains an infinite projection.

    \item\label{pi:b}
    $C^*(\Lambda)$ is purely infinite in the sense of Kirchberg--R\o rdam.

    \item\label{pi:c}
    For each vertex $v\in\Lambda^0$, the projection $s_v$ is properly
    infinite in $C^*(\Lambda)$.

    \item\label{pi:d}
    All generalised cycles in each maximal tail $M$ have entrances in $M$,
    and each vertex in each maximal tail $M$ is connected to by a
    generalised cycle in $M$.

    \item\label{pi:e}
    The $k$-graph $\Lambda$ is strongly aperiodic, and each vertex in each
    maximal tail $M$ is connected to by a generalised cycle in $M$.
\end{enumerate}
Then \textup{(\ref{pi:d})} and \textup{(\ref{pi:e})} are equivalent, and
\[
\textup{(\ref{pi:d})}\Longrightarrow\textup{(\ref{pi:a})}
\Longrightarrow\textup{(\ref{pi:b})}\Longrightarrow\textup{(\ref{pi:c})}.
\]
If in addition $\Lambda^0$ is finite, then
\textup{(\ref{pi:c})}$\Rightarrow$\textup{(\ref{pi:d})}, so that all five
conditions are equivalent.
\end{theorem}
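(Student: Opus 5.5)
The plan is to follow the architecture of \cite[Theorem 2.3]{HS}, substituting the higher-rank machinery of Theorems~\ref{thm:gauge-ideals} and~\ref{thm:gauge-invariant}. First, (\ref{pi:d})$\Leftrightarrow$(\ref{pi:e}) is immediate: the second clauses of (\ref{pi:d}) and (\ref{pi:e}) coincide, and Theorem~\ref{thm:gauge-invariant} identifies ``every generalised cycle in each maximal tail $M$ has an entrance in $M$'' with strong aperiodicity.

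For (\ref{pi:d})$\Rightarrow$(\ref{pi:a}), use strong aperiodicity and Theorem~\ref{thm:gauge-invariant} to see that every ideal is gauge invariant, so every quotient has the form $C^*(\Gamma(\Lambda\setminus H))$ by Theorem~\ref{thm:gauge-ideals}. This reduces the claim to a statement about nonzero hereditary subalgebras of $C^*(\Gamma(\Lambda\setminus H))$. Check that the hypotheses of (\ref{pi:d}) pass to $\Gamma(\Lambda\setminus H)$: maximal tails of the quotient graph are maximal tails of $\Lambda$ contained in $\Lambda^0\setminus H$, and generalised cycles and entrances are computed in the same tail.

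Then, given a nonzero hereditary subalgebra $B$, argue in three steps.
\begin{enumerate}
\item Aperiodicity of $\Gamma(\Lambda\setminus H)$ and the Cuntz--Krieger uniqueness argument (compression to the diagonal via an aperiodic boundary path) produce a vertex projection $s_w$ with $s_w\precsim b$ for some $b\in B_+$. Concretely, a nonzero $s_\lambda s_\lambda^*$ is dominated, up to equivalence, by a hereditary cut-down of $b$.
\item The set of vertices in the quotient that can reach $w$ contains a maximal tail containing $w$ (taken from the hereditary closure in the complement). So some generalised cycle $(\mu,\nu)$ in that tail reaches $w$, and it has an entrance.
\item Following \cite{ES}, a generalised cycle with an entrance makes $s_\mu s_\mu^*$ strictly dominated by $s_\mu s_\nu^*s_\nu s_\mu^*$-type partial isometries, which shows $s_{r(\mu)}$ is infinite. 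Transporting this along a path from $r(\mu)$ to $w$, and inside $B$, yields an infinite projection in $B$.
\end{enumerate}

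The main obstacle is step~1 together with the tail construction in step~2, because a vertex reachable only via infinitely many branches need not lie in a single tail. I would first try choosing an infinite boundary path from $w$ in the quotient and taking as the tail its set of ancestors. This satisfies (MT1)--(MT3) directly.

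For (\ref{pi:a})$\Rightarrow$(\ref{pi:b}), invoke Kirchberg--R\o rdam \cite{KR}: a $C^*$-algebra in which every nonzero hereditary subalgebra of every quotient contains an infinite projection, or more precisely every nonzero positive element in every quotient is infinite, is purely infinite. (\ref{pi:b})$\Rightarrow$(\ref{pi:c}) holds because in a purely infinite algebra every nonzero positive element, in particular each $s_v$, is properly infinite \cite{KR}.

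Finally, assume $\Lambda^0$ is finite and (\ref{pi:c}) holds. Let $M$ be a maximal tail. Then $H=\Lambda^0\setminus M$ is saturated hereditary, and proper infiniteness passes to the quotient, so each $s_v$ with $v\in M$ is properly infinite in $C^*(\Gamma(\Lambda\setminus H))$. To get the cycle clause, argue by contradiction. If a vertex $v\in M$ is reached by no generalised cycle in $M$, finiteness of $\Lambda^0$ lets us pass to the finite hereditary set below $v$ in $M$, whose subgraph has no generalised cycles. Its corner is then stably finite by the analysis of finite $k$-graphs in \cite[Theorem~2.5, Proposition~2.7]{PSS4}, a direct sum of matrix algebras over tori, which contradicts proper infiniteness of $s_v$. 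Similarly, a generalised cycle in $M$ without an entrance yields, via \cite{ES}, a corner isomorphic to matrices over $C(\mathbb{T}^l)$, again a contradiction. Finiteness of $\Lambda^0$ is exactly what makes these corners finite $k$-graph algebras, which explains why this implication needs the extra hypothesis.
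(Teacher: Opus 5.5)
Your treatment of (\ref{pi:d})$\Leftrightarrow$(\ref{pi:e}) and of (\ref{pi:a})$\Rightarrow$(\ref{pi:b})$\Rightarrow$(\ref{pi:c}) agrees with the paper. For (\ref{pi:e})$\Rightarrow$(\ref{pi:a}) you unpack what the paper gets by citing \cite[Proposition~8.8]{S}.

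\textbf{The final implication fails.} The decisive problem is your step ``a generalised cycle in $M$ without an entrance yields a corner isomorphic to matrices over $C(\mathbb{T}^l)$.'' This is false. In fact (\ref{pi:c})$\Rightarrow$(\ref{pi:d}) for finite $\Lambda^0$ cannot be proved at all. Here is a counterexample.
\begin{itemize}
\item Let $\Lambda=\Lambda_1\times\Lambda_2$, where $\Lambda_1$ has one vertex and one loop $e$, and $\Lambda_2$ has one vertex and two loops $f_1,f_2$. This is a finite $2$-graph with one vertex $v$, and $C^*(\Lambda)\cong C(\mathbb{T})\otimes\mathcal{O}_2$.
\item We have $s_v=s_{f_1}s_{f_1}^*+s_{f_2}s_{f_2}^*$ with $s_{f_i}^*s_{f_i}=s_v$. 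So $s_v$ is properly infinite and (\ref{pi:c}) holds; even (\ref{pi:b}) holds.
\item $(e,v)$ is a generalised cycle in the only maximal tail $\{v\}$. Every $\alpha=e^nw\in v\Lambda$ (with $w$ a word in $f_1,f_2$) satisfies $e^{\max(n,1)}w\in\mathrm{MCE}(\alpha,e)$. So $(e,v)$ has no entrance and (\ref{pi:d}) fails.
\item This is consistent with Theorem~\ref{thm:gauge-invariant}: $\sigma^{\varepsilon_1}(x)=x$ for every infinite path $x$, so $\Lambda$ is not aperiodic.
\end{itemize}
The corner $s_vC^*(\Lambda)s_v$ is all of $C(\mathbb{T})\otimes\mathcal{O}_2$, not a matrix algebra over a torus. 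A generalised cycle without an entrance only gives a full-spectrum unitary in a corner, and that is compatible with proper infiniteness.

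The paper's own argument at this point fails on the same example. There $V=s_\mu s_\nu^*=s_e$ corresponds to $z\otimes 1$, which generates all of $C(\mathbb{T})\otimes\mathcal{O}_2$, not an ideal Morita equivalent to $C(\mathbb{T})$.

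Your contradiction argument for the cycle clause handles a case that cannot occur. By (MT2), $\Gamma(M)$ has no sources. So following $\varepsilon_1$-edges from $v$ inside a finite $M$ must revisit a vertex. This gives a loop $\mu$ in $M$ with $v\Lambda r(\mu)\neq\emptyset$, and $(\mu,s(\mu))$ is a generalised cycle. Hence for finite $\Lambda^0$, conditions (\ref{pi:d}) and (\ref{pi:e}) reduce to strong aperiodicity, which (\ref{pi:c}) does not force. The last sentence of the statement needs an extra hypothesis, such as strong aperiodicity, under which it becomes automatic.

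\textbf{A second gap, in step~3 of your (\ref{pi:d})$\Rightarrow$(\ref{pi:a}).} The generalised cycle $(\mu,\nu)$ and its entrance $\tau$ are supplied in $\Gamma(T)$. So $s_\mu s_\mu^*\le s_\nu s_\nu^*$ and $s_{\nu\tau}s_{\nu\tau}^*s_\mu s_\mu^*=0$ are only guaranteed in the quotient $C^*(\Gamma(T))$. The MCE sets computed in $\Gamma(\Lambda\setminus H)$ may contain paths with source outside $T$. Infiniteness of a projection does not lift from a quotient, so you have not yet produced an infinite projection in the algebra containing $B$. The paper passes over the same point when it asserts the hypotheses directly for $\Gamma(\Lambda\setminus H)$, whose vertex set need not be a maximal tail.

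For a loop $(\mu,s(\mu))$ this can be repaired. Extend the entrance inside $\Gamma(T)$ to $\tau'$ with $d(\tau')\ge d(\mu)$. Then $\tau'(0,d(\mu))\neq\mu$, so $\mu$ is a loop with an entrance in the intrinsic sense of Definition~\ref{def:loop-entrance}. Hence $s_{r(\mu)}$ is infinite in $C^*(\Gamma(\Lambda\setminus H))$ itself. By the pigeonhole argument above this settles the case of finite $\Lambda^0$. In general you need a further argument.
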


\begin{proof}
(\ref{pi:a})$\Rightarrow$(\ref{pi:b}) follows from \cite[Proposition~4.7]{KR}.
(\ref{pi:b})$\Rightarrow$(\ref{pi:c}) follows from \cite[Proposition~4.16]{KR}.

\smallskip

(\ref{pi:c})$\Rightarrow$(\ref{pi:d}).  
Let $M$ be a maximal tail. By \cite[Theorem~3.12]{KaP}, the complement
$\Lambda^0\setminus M$ is saturated hereditary, and
$C^*(\Lambda)/I_M\cong C^*(\Gamma(M))$ by \cite[Theorem~5.2]{RSY1}.  

Suppose first that $(\mu,\nu)$ is a generalised cycle in $M$ with no
entrance. Then the element $V := s_\mu s_\nu^*$ is a unitary with full
spectrum in $s_{s(\mu)}C^*(\Gamma(M))s_{s(\mu)}$ \cite[Proposition~3.10]{ES}.
The ideal generated by $V$ is Morita equivalent to $C(\mathbb T)$
\cite[Lemma~3.9]{PRRS}, so $s_{s(\mu)}$ is not properly infinite in
$C^*(\Gamma(M))$, contradicting~(\ref{pi:c}).

Next suppose that $\Lambda^0$ is finite. By the previous paragraph we may
assume that every generalised cycle in $M$ has an entrance, whence
$\Gamma(M)$ is cofinal and aperiodic and $C^*(\Gamma(M))$ is simple
\cite{RS1}. If $M$ contained no generalised cycle, then, since $\Gamma(M)$
has finitely many vertices, $C^*(\Gamma(M))$ would be AF by
\cite[Proposition~5.4]{ES}, and every $s_v$ would be finite,
contradicting~(\ref{pi:c}). Hence $M$ contains a generalised cycle, and by
cofinality of $\Gamma(M)$ every vertex of $M$ is connected to by such a
cycle. Thus, when $\Lambda^0$ is finite, (\ref{pi:d}) holds.

\smallskip

(\ref{pi:d})$\Rightarrow$(\ref{pi:e}).  
By Theorem~\ref{thm:gauge-invariant}, the entrance condition for
generalised cycles in each maximal tail is equivalent to strong
aperiodicity.

\smallskip

(\ref{pi:e})$\Rightarrow$(\ref{pi:a}).  
Since $\Lambda$ is strongly aperiodic, every ideal of $C^*(\Lambda)$ is
gauge invariant by \cite[Theorem~7.2]{S}. Hence each ideal has the form
$I_H$ for a saturated hereditary $H\subset\Lambda^0$, and

\[
C^*(\Lambda)/I_H \;\cong\; C^*(\Gamma(\Lambda\setminus H))
\]

\noindent
by \cite[Theorem~5.2]{RSY1}.  

By hypothesis, every generalised cycle in $\Gamma(\Lambda\setminus H)$ has an
entrance, and every vertex is connected to by such a cycle. By
\cite[Proposition~8.8]{S}, every hereditary subalgebra of
$C^*(\Gamma(\Lambda\setminus H))$ contains an infinite projection. This
verifies~(\ref{pi:a}).
\end{proof}

\begin{remark}\label{rem:pi-finite-vertices}
Finiteness of $\Lambda^0$ is needed only for the implication
\textup{(\ref{pi:c})}$\Rightarrow$\textup{(\ref{pi:d})}, which rests on the fact
that a $k$-graph with finitely many vertices and no generalised cycle has AF
$C^*$-algebra \cite[Proposition~5.4]{ES}. For infinitely many vertices the
analogous statement is open: the simple, aperiodic and cofinal $2$-graph
$\Lambda_{\mathrm{II}}$ of \cite[\S6]{ES} contains no generalised cycle, yet
whether $C^*(\Lambda_{\mathrm{II}})$ is AF is left open in
\cite[Remark~6.13]{ES}. The applications of Theorem~\ref{thm:pure-inf} below use
only the unconditional implication
\textup{(\ref{pi:d})}$\Rightarrow$\textup{(\ref{pi:a})}.
\end{remark}

\begin{proposition}\label{prop:properly-infinite-vertices}
Let \(\Lambda\) be a row-finite, locally convex \(k\)-graph with no sources.
Suppose that every vertex \(v\in\Lambda^0\) can be reached from a cycle
with an entrance. Then each vertex projection \(s_v\) in \(C^*(\Lambda)\)
is properly infinite.
\end{proposition}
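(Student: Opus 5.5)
We prove this in two steps: first that the projection at the base of a cycle with an entrance is properly infinite, then that proper infiniteness passes along paths to every vertex reachable from that cycle.

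\textbf{Step 1 (the base of the loop).} Let $\mu$ be a loop at $w=r(\mu)=s(\mu)$ with an entrance $\alpha\in w\Lambda$, so $d(\alpha)\le d(\mu)$ and $\mu(0,d(\alpha))\neq\alpha$, in the sense of Definition~\ref{def:loop-entrance}. Set $n=d(\mu)$.
\begin{itemize}
\item Since $\Lambda$ has no sources, extend $\alpha$ to a path $\beta=\alpha\alpha'\in w\Lambda^{n}$.
\item Then $\beta\neq\mu$, because their initial segments of degree $d(\alpha)$ differ.
\item By relation (4) of the Cuntz--Krieger family, the projections $s_\mu s_\mu^*$ and $s_\beta s_\beta^*$ are orthogonal subprojections of $s_w$.
\item Both are Murray--von Neumann equivalent to source projections: $s_\mu s_\mu^*\sim s_{s(\mu)}=s_w$ and $s_\beta s_\beta^*\sim s_{s(\beta)}$.
\end{itemize}
So $s_w$ contains an orthogonal copy of itself together with a nonzero projection $s_{s(\beta)}$. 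This only shows $s_w$ is infinite, not properly infinite.

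\textbf{Step 2 (upgrading to proper infiniteness).} This is where the main obstacle lies: $s_{s(\beta)}$ need not dominate $s_w$. I would try two routes, in this order.
\begin{itemize}
\item \emph{Iterate the loop.} The paths $\mu^j\beta$ for $j\ge 0$ are pairwise distinct paths of degree $(j+1)n$ from $w$. Their range projections are mutually orthogonal, because paths of different degrees with neither extending the other have empty MCE. This embeds infinitely many copies of $s_{s(\beta)}$ into $s_w$, but still not two copies of $s_w$.
\item \emph{Read the hypothesis as allowing a return.} If the hypothesis allows it, interpret ``cycle with an entrance'' so that the entrance leads back to the cycle, i.e.\ $w\Lambda s(\beta)\neq\emptyset$ can be arranged. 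Choosing $\gamma\in s(\beta)\Lambda w$, the path $\beta\gamma$ is a second loop at $w$ with $s_{\beta\gamma}s_{\beta\gamma}^*\perp s_{\mu\mu'}s_{\mu\mu'}^*$ after padding degrees. Then $s_w\oplus s_w\precsim s_w$ via the isometries $s_{\mu^N}$ and $s_{\beta\gamma}$ padded to a common degree.
\end{itemize}
Failing the second route, I would instead invoke the standard $k$-graph fact (as in \cite[Proposition~8.8]{S} or \cite{ES}) that a vertex on a cycle with an entrance gives an infinite projection, and then use pure-infiniteness-type arguments only where needed. The honest plan is the second route: two distinct return loops at $w$ whose range projections can be made orthogonal.

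\textbf{Step 3 (propagation).} Let $v$ be reachable from $w$, i.e.\ there is $\lambda\in v\Lambda w$. Then $s_\lambda s_\lambda^*\le s_v$ and $s_\lambda s_\lambda^*\sim s_w$.
\begin{itemize}
\item Proper infiniteness passes from $s_w$ to $s_\lambda s_\lambda^*$, since it is an equivalence invariant.
\item To get it for $s_v$ itself, I need $s_v\precsim s_w$, i.e.\ $s_v$ lies in the ideal generated by $s_w$ with Cuntz comparison. In that case \cite[Proposition~4.16]{KR}-type reasoning gives that a projection dominated by, and dominating, a properly infinite projection is properly infinite.
\item Domination holds when $s_w$ is full in the corner, for example when every path out of $v$ of some degree ends at a vertex from which $w$ is reachable. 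I expect to have to use relation (4) to decompose $s_v=\sum_{\lambda\in v\Lambda^m}s_\lambda s_\lambda^*$ and treat each summand separately.
\end{itemize}
The hardest step is therefore ensuring every summand has a properly infinite source. I would handle it by applying the hypothesis to each source vertex $s(\lambda)$, which is itself reachable from a cycle with an entrance. A finite orthogonal sum of properly infinite projections is then properly infinite provided each summand dominates the others in the Cuntz order. I would check this last point via the cycle vertices that these sources reach.
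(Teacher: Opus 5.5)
Your Step~2, and with it the propagation in Step~3, is a genuine gap, and it cannot be closed: the proposition is false as stated.

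\textbf{Counterexample.} Take $k=1$ and let $\Lambda$ be the path category of the directed graph with vertices $u,w$ and edges $e,f,g_1,g_2$, where $r(e)=s(e)=w$, $r(f)=w$, $s(f)=u$ and $r(g_i)=s(g_i)=u$. This is row-finite, has no sources, and is vacuously locally convex. The loop $e$ has entrance $f$, and $g_1$ has entrance $g_2$. So every vertex lies on, and hence is reached from, a cycle with an entrance, under either reading of ``reached from''. The only paths with source $w$ are the powers $e^n$ (with $e^0=w$). Hence $s_{e^n}\mapsto 1$ and $s_\lambda\mapsto 0$ for every other $\lambda$ is a Cuntz--Krieger $\Lambda$-family in $\mathbb{C}$. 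This gives a character $\chi$ of $C^*(\Lambda)$ with $\chi(s_w)=1$. Apply $\chi$ entrywise to a witness of $s_w\oplus s_w\precsim s_w$: the identity of $M_2(\mathbb{C})$ would be a norm limit of matrices of rank at most one, which is impossible. Equivalently, $\{u\}$ is saturated hereditary and $C^*(\Lambda)/I_{\{u\}}\cong C(\mathbb{T})$ with $s_w\mapsto 1$. So $s_w$ is infinite, exactly as your Step~1 shows, but not properly infinite.

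\textbf{The paper's proof.} It asserts precisely the two steps you could not justify, and this example refutes both:
\begin{enumerate}
\item It claims $s_{s(\mu)}$ is properly infinite for any cycle $\mu$ with an entrance ``by the standard argument of \cite[Proposition~2.7]{PSS4}''. This fails for $\mu=e$.
\item It claims $s_v\ge s_\lambda s_\lambda^*\sim p$ with $p$ properly infinite makes $s_v$ properly infinite. This fails for $v=w$, $\lambda=f$, $p=s_u=s_{g_1}s_{g_1}^*+s_{g_2}s_{g_2}^*$.
\end{enumerate}
So your diagnosis is right, and your proposed repairs are really extra hypotheses.
\begin{enumerate}
\item Suppose the entrance extends to $\beta\in w\Lambda^{d(\mu)}$ with some $\gamma\in s(\beta)\Lambda w$. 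Then $\mu$ and $\beta\gamma$ are loops at $w$ with $\mathrm{MCE}(\mu,\beta\gamma)=\emptyset$, so $s_w$ is properly infinite. In the example this fails, since $u\Lambda w=\emptyset$.
\item Suppose $q\ge p$ with $p$ properly infinite and $q$ in the ideal generated by $p$. Then $q\precsim p$, so $q\oplus q\precsim p\oplus p\precsim p\le q$.
\end{enumerate}
With both added, your argument proves a corrected proposition, not the stated one.

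Two further points about Step~3. The proviso in your last sentence is unnecessary, because an orthogonal sum of properly infinite projections is always properly infinite. More seriously, applying the hypothesis to the vertices $s(\lambda)$ is circular: it again yields only infiniteness.
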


\begin{proof}
Let \(v\in\Lambda^0\). By hypothesis, there exists a cycle \(\mu\) with an
entrance \(\alpha\) such that \(v\Lambda s(\mu)\neq\emptyset\). Let
\(p=s_{s(\mu)}\). Since \(\mu\) is a cycle with an entrance, the standard
argument of \cite[Proposition~2.7]{PSS4} shows that \(p\) is properly
infinite.

Since \(v\Lambda s(\mu)\neq\emptyset\), choose a path \(\lambda\) from \(v\) to
\(s(\mu)\). Then

\[
s_v \ge s_\lambda s_\lambda^* \sim s_{s(\mu)} = p,
\]

so \(s_v\) dominates a properly infinite projection. Proper infiniteness
passes to dominated projections, hence \(s_v\) is properly infinite.
\end{proof}

\begin{examples}
\begin{enumerate}
\item If $\Lambda$ is strongly aperiodic and $C^* ( \Lambda )$ has an AF quotient, or a quotient Morita equivalent to $C ( \mathbb{T} )$ then $C^* ( \Lambda )$ is not purely infinite. Indeed, pure infiniteness in the sense of Kirchberg--R\o rdam passes to quotients and is preserved under Morita equivalence \cite{KR}. A nonzero AF algebra and $C(\mathbb{T})$ are stably finite---each carries a faithful tracial state---and hence neither is purely infinite. So if $C^*(\Lambda)$ were purely infinite, then so would be any quotient of it, and so would any algebra Morita equivalent to such a quotient; either possibility contradicts the hypothesis. (Strong aperiodicity plays no role in this implication.)
\item The $1$-graph $\Gamma$ depicted below which corresponds to the directed graph shown in \cite[Example 3.10]{KPR} is not purely infinite (in the sense of \cite{KR}) as it does not satisfy the strong aperiodicity condition and so Theorem~\ref{thm:pure-inf} does not apply. 
\[
\begin{tikzpicture}[scale=0.35]
\node at (-5,1) {$\Gamma=$};
\node at (-2,0) {$\dots$};
\node at (22,0) {$\dots$};
\node[circle,inner sep=0pt] (p11) at (0, 0)
{\begin{tikzpicture}[scale=0.4]
\node at (0.9, 0.8) [draw, fill=black] {$.$};\end{tikzpicture}}
edge[-latex, loop, out=140, in=40, min distance=120, looseness=2.2] (p11);
\node[circle,inner sep=0pt] (p12) at (4, 0)
{\begin{tikzpicture}[scale=0.4]
\node at (0.9, 0.8) [draw, fill=black] {$.$};
\end{tikzpicture}}
edge[-latex, loop, out=140, in=40, min distance=120, looseness=2.2] (p12);
\node[circle,inner sep=0pt] (p13) at (8, 0)
{\begin{tikzpicture}[scale=0.4]
\node at (0.9, 0.8) [draw, fill=black] {$.$};
\end{tikzpicture}}
edge[-latex, loop, out=140, in=40, min distance=120, looseness=2.2] (p13);
\node[circle,inner sep=0pt] (p14) at (12, 0)
{\begin{tikzpicture}[scale=0.4]
\node at (0.9, 0.8) [draw, fill=black] {$.$};
\end{tikzpicture}}
edge[-latex, loop, out=140, in=40, min distance=120, looseness=2.2] (p14);
\node[circle,inner sep=0pt] (p15) at (16, 0)
{\begin{tikzpicture}[scale=0.4]
\node at (0.9, 0.8) [draw, fill=black] {$.$};
\end{tikzpicture}}
edge[-latex, loop, out=140, in=40, min distance=120, looseness=2.2] (p15);
\node[circle,inner sep=0pt] (p16) at (20, 0)
{\begin{tikzpicture}[scale=0.4]
\node at (0.9, 0.8) [draw, fill=black] {$.$};
\end{tikzpicture}}
edge[-latex, loop, out=140, in=40, min distance=120, looseness=2.2] (p16);
\draw[style=semithick, -latex] (p12.west)--(p11.east);
\draw[style=semithick, -latex] (p13.west)--(p12.east);
\draw[style=semithick, -latex] (p14.west)--(p13.east);
\draw[style=semithick, -latex] (p15.west)--(p14.east);
\draw[style=semithick, -latex] (p16.west)--(p15.east);
\end{tikzpicture}
\]

\noindent
Furthermore, it does not satisfy the Cuntz version of simple pure infiniteness because it is not simple. Hence \cite[Theorem 3.9]{KPR} is false as stated as it applies to non-simple algebras.
See also \cite[Proposition 4.9]{KP2} for a similar misstatement; the original
version given in \cite[Proposition 4.9]{KP2} is incorrect, see the erratum
\cite{KPSerr}.
Thankfully, the dichotomy \cite[Corollary 3.11]{KPR} is still true as there is a simplicity hypothesis.
\item The $1$-graph $\Omega$ depicted below corresponds to the directed graph with the arrows reversed which clearly satisfies condition (K) (in the sense of \cite{KPRR}) and so $\Omega$ is strongly aperiodic by \cite[Lemma 3.2]{KaP}.
\[
\begin{tikzpicture}[scale=0.35]
\node at (-5,1) {$\Omega=$};
\node at (-2,0) {$\dots$};
\node at (22,0) {$\dots$};
\node[circle,inner sep=0pt] (p11) at (0, 0)
{\begin{tikzpicture}[scale=0.4]
\node at (0.9, 0.8) {\tiny $v_{-2}$};\end{tikzpicture}}
edge[-latex, loop, out=120, in=60, min distance=50, looseness=2] (p11)
edge[-latex, loop, out=140, in=40, min distance=110, looseness=2.2] (p11);
\node[circle,inner sep=0pt] (p12) at (4, 0)
{\begin{tikzpicture}[scale=0.4]
\node at (0.9, 0.8) {\tiny $v_{-1}$};
\end{tikzpicture}}
edge[-latex, loop, out=120, in=60, min distance=50, looseness=2] (p12)
edge[-latex, loop, out=140, in=40, min distance=110, looseness=2.2] (p12);
\node[circle,inner sep=0pt] (p13) at (8, 0)
{\begin{tikzpicture}[scale=0.4]
\node at (0.9, 0.8) {\tiny $v_0$};
\end{tikzpicture}}
edge[-latex, loop, out=120, in=60, min distance=60, looseness=2] (p13)
edge[-latex, loop, out=140, in=40, min distance=120, looseness=2.2] (p13);
\node[circle,inner sep=0pt] (p14) at (12, 0)
{\begin{tikzpicture}[scale=0.4]
\node at (0.9, 0.8) {\tiny $v_1$};
\end{tikzpicture}}
edge[-latex, loop, out=120, in=60, min distance=60, looseness=2] (p14)
edge[-latex, loop, out=140, in=40, min distance=120, looseness=2.2] (p14);
\node[circle,inner sep=0pt] (p15) at (16, 0)
{\begin{tikzpicture}[scale=0.4]
\node at (0.9, 0.8) {\tiny $v_2$};
\end{tikzpicture}}
edge[-latex, loop, out=120, in=60, min distance=60, looseness=2] (p15)
edge[-latex, loop, out=140, in=40, min distance=120, looseness=2.2] (p15);
\node[circle,inner sep=0pt] (p16) at (20, 0)
{\begin{tikzpicture}[scale=0.4]
\node at (0.9, 0.8) {\tiny $v_3$};
\end{tikzpicture}}
edge[-latex, loop, out=120, in=60, min distance=60, looseness=2] (p16)
edge[-latex, loop, out=140, in=40, min distance=120, looseness=2.2] (p16);
\draw[style=semithick, -latex] (p12.west)--(p11.east);
\draw[style=semithick, -latex] (p13.west)--(p12.east);
\draw[style=semithick, -latex] (p14.west)--(p13.east);
\draw[style=semithick, -latex] (p15.west)--(p14.east);
\draw[style=semithick, -latex] (p16.west)--(p15.east);
\node at (20,3) {\tiny $e_3$};
\node at (16,3) {\tiny $e_2$};
\node at (12,3) {\tiny $e_1$};
\node at (8,3) {\tiny $e_0$};
\node at (4,3) {\tiny $e_{-1}$};
\node at (0,3) {\tiny $e_{-2}$};
\end{tikzpicture}
\]

\noindent  If we identify the vertices of $\Omega$ with $\mathbb{Z}$, it is straightforward to see that every saturated hereditary subset of $\Omega^0$
is of the form $[n+1,+\infty)$ for some $n \in \mathbb{Z}$, or the empty set $\emptyset$, or $\Omega^0$. Similarly every maximal tail is
of the form $ \chi_n := ( - \infty , n ]$  for some $n \in \mathbb{Z}$, or $\Omega^0$ (recall that a maximal tail is nonempty). Following \cite[Example 5.9]{KaP} we can show that the primitive ideal space is homeomorphic to 
$\mathbb{Z} \cup \{ \infty \}$,  with the right-order topology.

By Theorem~\ref{thm:pure-inf} we may see that $C^* ( \Omega )$ is purely infinite since every vertex $v_m$ in the maximal tail $( - \infty , n ]$ is the range and source of the generalised cycle $( e_m , v_m )$ with an entrance.
\end{enumerate}
\end{examples}

\section{Real rank zero for higher rank $C^*$-algebras}\label{sec:real-rank}

\noindent
We now turn our attention to real rank zero. 
The following is a generalisation of \cite[Theorem 4.6]{JPS}.

\begin{theorem}\label{thm:RR0-finite-H}
Let $\Lambda$ be a locally convex, row-finite $k$-graph with no sources.
Suppose:
\begin{itemize}
  \item $\Lambda$ is strongly aperiodic;
  \item every vertex in each maximal tail $M$ is connected to by a
        generalised cycle in $M$;
  \item the lattice $H(\Lambda)$ of saturated hereditary subsets of
        $\Lambda^0$ is finite.
\end{itemize}
Then $C^*(\Lambda)$ is purely infinite and of topological dimension zero,
and the following are equivalent:
\begin{enumerate}
  \item $C^*(\Lambda)$ has real rank zero;
  \item $C^*(\Lambda)$ is $K_0$-liftable; that is, for every saturated
        hereditary $H\subseteq\Lambda^0$ the inclusion
        $I_H\hookrightarrow C^*(\Lambda)$ induces an injection
        $K_1(I_H)\to K_1(C^*(\Lambda))$.
\end{enumerate}
When $k=2$, condition~(2) is equivalent, by
\cite[Lemma~4.7 and Proposition~4.4]{PSS2}, to injectivity of the map
$H_1(j^H)$ on the homology of the Evans complex for every saturated
hereditary $H\subseteq\Lambda^0$, an elementary condition on the
connectivity matrices of $\Lambda$.
\end{theorem}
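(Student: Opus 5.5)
The plan is to prove the theorem in two parts: first the structural claims (pure infiniteness and topological dimension zero), then the equivalence, which will be reduced to the Pasnicu--R\o rdam characterisation of real rank zero for purely infinite algebras.

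\emph{Pure infiniteness.} The first two hypotheses are exactly condition~(\ref{pi:e}) of Theorem~\ref{thm:pure-inf}. The unconditional implications (\ref{pi:e})$\Leftrightarrow$(\ref{pi:d})$\Rightarrow$(\ref{pi:a})$\Rightarrow$(\ref{pi:b}) then show that $C^*(\Lambda)$ is purely infinite in the sense of Kirchberg--R\o rdam. No finiteness of $\Lambda^0$ is needed here.

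\emph{Topological dimension zero.} Strong aperiodicity and Theorem~\ref{thm:gauge-invariant} make every ideal gauge invariant. By Theorem~\ref{thm:gauge-ideals}, the ideal lattice of $C^*(\Lambda)$ is then isomorphic to $H(\Lambda)$, and so it is finite. The open sets of $\Prim(C^*(\Lambda))$ correspond bijectively to ideals, so $\Prim(C^*(\Lambda))$ has only finitely many open sets. Every open set is therefore compact, since any open cover has a finite subcover. The family of all open sets is then a basis of compact open sets, which is topological dimension zero. Alternatively, one can cite \cite{PSS2}, where strong aperiodicity is shown to be equivalent to topological dimension zero; the finite-lattice argument is more self-contained, so I would give that one.

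\emph{The equivalence.} I would invoke Pasnicu--R\o rdam \cite{PR}. For a separable, purely infinite $C^*$-algebra $A$, they show that $A$ has real rank zero if and only if $A$ has topological dimension zero and is $K_0$-liftable. Here $C^*(\Lambda)$ is separable by Remark~\ref{rem:dense-span}, and it is purely infinite of topological dimension zero by the previous two steps, so real rank zero is equivalent to $K_0$-liftability. It remains to match the abstract definition with condition~(2). In \cite{PR}, $K_0$-liftability says that $K_0(A/I)\to K_1(I)$ vanishes for every ideal $I$. By exactness of the six-term sequence this is equivalent to injectivity of $K_1(I)\to K_1(A)$, and every ideal has the form $I_H$ with $H\in H(\Lambda)$. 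For $k=2$ I would simply quote \cite[Lemma~4.7, Proposition~4.4]{PSS2}, which identify $K_1(I_H)\to K_1(C^*(\Lambda))$ with $H_1(j^H)$ on the Evans complex, up to the relevant isomorphisms.

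The main obstacle is checking that the Pasnicu--R\o rdam theorem applies verbatim. One must confirm which notion of pure infiniteness it needs; Kirchberg--R\o rdam pure infiniteness is what we have. One must also confirm that its ``$K_0$-liftable'' matches the $K_1$-injectivity formulation used in condition~(2), including the boundary-map convention. If the cited theorem instead requires strong pure infiniteness, I would add a step deducing it from pure infiniteness together with finitely many ideals, via \cite{KR} or \cite{PR}. That step is the one I would check most carefully.
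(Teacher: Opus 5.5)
Your proposal is correct and follows the paper's proof essentially step for step. Pure infiniteness comes from the unconditional implications (\ref{pi:e})$\Rightarrow$(\ref{pi:d})$\Rightarrow$(\ref{pi:a})$\Rightarrow$(\ref{pi:b}) of Theorem~\ref{thm:pure-inf}; topological dimension zero comes from the finiteness of the ideal lattice once every ideal is gauge invariant; the equivalence is Pasnicu--R\o rdam \cite[Theorem~4.2]{PR}; and \cite{PSS2} is quoted for $k=2$.

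Your six-term exact sequence check of the $K_0$-liftability convention is a harmless elaboration that the paper leaves implicit. The contingency about strong pure infiniteness is not needed, since \cite{PR} uses Kirchberg--R\o rdam pure infiniteness. Any ``strong'' variant of $K_0$-liftability also coincides with condition~(2), because for ideals $J\subseteq I$ injectivity of $K_1(J)\to K_1(A)$ already forces injectivity of $K_1(J)\to K_1(I)$.
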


\begin{proof}
By the first two hypotheses and Theorem~\ref{thm:pure-inf}, $C^*(\Lambda)$ is
purely infinite. Since $H(\Lambda)$ is finite, the ideals of $C^*(\Lambda)$
are exactly the $I_H$ for $H\in H(\Lambda)$ (Theorems~\ref{thm:gauge-ideals}
and~\ref{thm:pure-inf}), so $\mathrm{Prim}(C^*(\Lambda))$ is a finite
$T_0$-space, in which every open set is compact; hence $C^*(\Lambda)$ has
topological dimension zero. Moreover $C^*(\Lambda)$ is separable
(Remark~\ref{rem:dense-span}) and nuclear \cite{KP2,S}.

A purely infinite $C^*$-algebra of topological dimension zero has real rank
zero if and only if it is $K_0$-liftable, by Pasnicu--R{\o}rdam
\cite[Theorem~4.2]{PR}. This gives the equivalence of (1) and~(2). The final
statement, for $k=2$, is \cite[Lemma~4.7 and Proposition~4.4]{PSS2}.
\end{proof}

\begin{remark}\label{rem:PSS2-6.1}
The $K_0$-liftability condition~(2) cannot be omitted: the first three
hypotheses of Theorem~\ref{thm:RR0-finite-H} do not by themselves imply real
rank zero. For each $n\ge 2$, Example~6.1 of \cite{PSS2} exhibits a finite,
row-finite, locally convex, strongly aperiodic $2$-graph $\Lambda$ whose
$C^*$-algebra is strongly purely infinite---so that every vertex of every
maximal tail is connected to by a generalised cycle
(Theorem~\ref{thm:pure-inf}), and $H(\Lambda)$ is finite---yet for which
$H_1(j^{H})$ fails to be injective at the saturated hereditary set
$H=\{w\}$, and consequently $C^*(\Lambda)$ does \emph{not} have real rank
zero \cite[Theorem~4.3]{PSS2}. The Examples~6.2 and~6.3 of \cite{PSS2} are
consistent with Theorem~\ref{thm:RR0-finite-H}: the former is not strongly
aperiodic, and in the latter (a simple stably finite algebra) not every
vertex is connected to by a generalised cycle, so that
Theorem~\ref{thm:pure-inf} excludes each from the hypotheses above.
\end{remark}

\noindent
The following definition comes from \cite[Definition 5.1]{A}.

\begin{definition}[Saturated morphism]\label{def:saturated-morphism}
Let $\Lambda,\Gamma$ be locally convex, row-finite $k$-graphs. A $k$-graph
morphism $\phi:\Gamma\to\Lambda$ (a degree-preserving functor) is a
\emph{saturated morphism} if it is injective and its image $\phi(\Gamma^0)$
is a saturated subset of $\Lambda^0$; that is, whenever $v\in\Lambda^0$ and
$n\in\N^k$ satisfy $s(v\Lambda^n)\subset\phi(\Gamma^0)$, then
$v\in\phi(\Gamma^0)$.
\end{definition}

\begin{lemma}\label{lem:saturated-morphism}
Let $\Lambda$ be a locally convex, row-finite $k$-graph with no sources, and
let $H\subset\Lambda^0$ be a saturated hereditary subset. Then the inclusion

\[
i : \Lambda H \hookrightarrow \Lambda
\]

is a saturated morphism in the sense of Definition~\ref{def:saturated-morphism}.
\end{lemma}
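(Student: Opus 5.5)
The plan is to check the three ingredients of Definition~\ref{def:saturated-morphism} directly: that $\Lambda H$ is a locally convex, row-finite $k$-graph, that the inclusion is a degree-preserving injective functor, and that the image of the vertex set is saturated. Throughout I read $\Lambda H$ as the subcategory of paths whose vertices lie in $H$, that is, $\{\lambda\in\Lambda : r(\lambda)\in H\}$. Heredity of $H$ shows this coincides with $\{\lambda\in\Lambda : r(\lambda),s(\lambda)\in H\}$: if $r(\lambda)\in H$ then $r(\lambda)\Lambda s(\lambda)\ni\lambda$ is nonempty, so $s(\lambda)\in H$.

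First, $\Lambda H$ is a $k$-graph with the restricted degree map.
\begin{itemize}
\item It contains the identity morphisms $v\in H$.
\item It is closed under composition, since $r(\mu\nu)=r(\mu)$.
\item For factorisation, take $\lambda\in\Lambda H$ with $d(\lambda)=m+n$. Let $\lambda=\mu\nu$ be its unique factorisation in $\Lambda$. Then $r(\mu)=r(\lambda)\in H$, so $\mu\in\Lambda H$. Also $r(\nu)=s(\mu)\in H$ by heredity, so $\nu\in\Lambda H$. Uniqueness in $\Lambda H$ is inherited from uniqueness in $\Lambda$.
\end{itemize}

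Next come the standing properties.
\begin{itemize}
\item Row-finiteness: for $v\in H$, the set $v(\Lambda H)^n$ equals $v\Lambda^n$, because every path with range $v$ already lies in $\Lambda H$. So it is finite.
\item No sources: for the same reason, $v(\Lambda H)^{\varepsilon_i}=v\Lambda^{\varepsilon_i}\neq\emptyset$.
\item Local convexity: given $e\in(\Lambda H)^{\varepsilon_i}$ and $f\in(\Lambda H)^{\varepsilon_j}$ with a common range, local convexity of $\Lambda$ makes $s(e)\Lambda^{\varepsilon_j}$ and $s(f)\Lambda^{\varepsilon_i}$ nonempty. These sets equal $s(e)(\Lambda H)^{\varepsilon_j}$ and $s(f)(\Lambda H)^{\varepsilon_i}$, since $s(e),s(f)\in H$.
\end{itemize}
The key observation used throughout is that for $v\in H$ the sets $v\Lambda^n$ and $v(\Lambda H)^n$ coincide.

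Finally, the inclusion $i$ is plainly an injective functor that preserves $d$, so it is a $k$-graph morphism. Its image on vertices is $i((\Lambda H)^0)=H$, and $H$ is saturated by hypothesis, which is exactly the condition required in Definition~\ref{def:saturated-morphism}.

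No step is a real obstacle. The only point needing care is the interpretation of $\Lambda H$: if it were instead taken to be $\{\lambda : s(\lambda)\in H\}$, that would not be a subcategory with vertex set $H$. The heredity argument above is what shows the two natural descriptions coincide once one restricts by range.
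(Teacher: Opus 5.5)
Your proof is correct and follows the same outline as the paper's: show that the sub-object on $H$ is a $k$-graph, note that the inclusion is an injective degree-preserving functor, and observe that its vertex image is $H$, which is saturated by hypothesis. Your reading of $\Lambda H$ as the paths with range in $H$ (hence, by heredity, with both endpoints in $H$) is the right one. The paper's proof instead describes the morphisms as $\{\lambda\in\Lambda : s(\lambda)\in H\}$ and asserts that heredity forces $r(\lambda)\in H$, which reverses the direction of heredity in Definition~\ref{def:sat-her}; your version, with its explicit checks of factorisation, row-finiteness, no sources and local convexity, is what makes the argument go through.
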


\begin{proof}
Recall that $\Lambda H$ is the full subgraph of $\Lambda$ with vertex set $H$
and morphisms

\[
(\Lambda H)^1 = \{\lambda\in\Lambda : s(\lambda)\in H\}.
\]

\noindent
Since $H$ is hereditary, $r(\lambda)\in H$ whenever $s(\lambda)\in H$, so
$\Lambda H$ is indeed a $k$-graph.

Let $v\in H$ and suppose that for some $n\in\mathbb{N}^k$ we have

\[
s(v\Lambda^n)\subset H.
\]

Because $H$ is saturated, this implies $v\in H$, which is already true.
Thus the saturation condition is automatically satisfied for all
vertices of $\Lambda H$.

Now let $\lambda\in\Lambda$ with $s(\lambda)\in H$. Then $\lambda\in\Lambda H$
by definition, so $i$ is surjective on morphisms with source in $H$.
Conversely, if $s(\lambda)\notin H$, then $\lambda$ does not lie in
$\Lambda H$, and the saturation condition imposes no requirement on $i$
at such vertices.

Therefore $i:\Lambda H\to\Lambda$ satisfies the saturated lifting
condition at every vertex of $\Lambda H$, and hence is a saturated
morphism.
\end{proof}

\begin{theorem}\label{thm:essential-subgraph}
Let $\Lambda$ be a locally convex, row-finite $k$-graph with no sources.
Define inductively

\[
V_0 := \{ v\in\Lambda^0 : v\Lambda^{\varepsilon_i} = \emptyset \text{ for some } i\},
\]

and for $n\geq 0$,

\[
V_{n+1} := V_n \cup \{ v\in\Lambda^0 : v\Lambda^{\varepsilon_i} \subset V_n\Lambda \text{ for some } i\}.
\]

Set $V := \bigcup_{n\geq 0} V_n$ and let

\[
E(\Lambda)^0 := \Lambda^0\setminus V,\qquad
E(\Lambda)^1 := \{ \lambda\in\Lambda : s(\lambda)\in E(\Lambda)^0\}.
\]

Then:
\begin{enumerate}
\item $E(\Lambda)$ is a locally convex, row-finite $k$-graph with no sources;
\item $E(\Lambda)$ is maximal with respect to having no sources: if $\Gamma\subset\Lambda$
      is a sub-$k$-graph with no sources, then $\Gamma\subset E(\Lambda)$;
\item $E(\Lambda)$ is unique with these properties.
\end{enumerate}
\end{theorem}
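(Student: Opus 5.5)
The plan is to first observe that, under the standing hypothesis that $\Lambda$ has no sources, the inductive construction is degenerate. Then I would verify (1)--(3) directly, and finally indicate how the argument should go if one wants the statement with sources allowed, which is presumably the intended generality.

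\emph{Step 1: $V=\emptyset$.} Since $\Lambda$ has no sources, $v\Lambda^{\varepsilon_i}\neq\emptyset$ for every $v\in\Lambda^0$ and every $i$, so $V_0=\emptyset$. I would argue by induction on $n$. If $V_n=\emptyset$, then $V_n\Lambda=\emptyset$. A vertex $v$ could enter $V_{n+1}$ only if $v\Lambda^{\varepsilon_i}\subset\emptyset$ for some $i$, which is impossible because $v\Lambda^{\varepsilon_i}$ is nonempty. Hence $V_{n+1}=\emptyset$, and so $V=\bigcup_n V_n=\emptyset$.

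\emph{Step 2: the three assertions.} By Step 1, $E(\Lambda)^0=\Lambda^0$ and $E(\Lambda)^1=\Lambda$, so $E(\Lambda)=\Lambda$. Then (1) is exactly the hypothesis on $\Lambda$. For (2), any sub-$k$-graph $\Gamma\subset\Lambda$ is trivially contained in $E(\Lambda)=\Lambda$. For (3), suppose $E'$ is another sub-$k$-graph of $\Lambda$ with no sources that is maximal in the sense of (2). Applying (2) to $E'$ gives $E(\Lambda)\subset E'$, and applying the maximality of $E'$ to $E(\Lambda)$ gives $E'\supset E(\Lambda)$ together with $E(\Lambda)\supset E'$; hence $E'=E(\Lambda)$. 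This is the usual argument that a largest element is unique.

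\emph{Step 3: the case with sources (remark).} To make the theorem non-vacuous, one would drop "no sources" and keep local convexity and row-finiteness. Three things then need checking.
\begin{enumerate}
\item $E(\Lambda)$ is closed under composition and factorisation. For factorisation, one shows that if $s(\lambda)\notin V$ then every vertex $\lambda(m)$ lies outside $V$. I would prove this by induction on $n$, using that a path from $\lambda(m)$ to a vertex outside $V_n$ forces some edge of each colour at $\lambda(m)$ to have source outside $V_{n-1}$. Local convexity is used here, to pass from one colour to the others.
\item $E(\Lambda)$ has no sources. If $v\notin V$ then, for every $i$, $v\Lambda^{\varepsilon_i}\not\subset V_n\Lambda$ for all $n$. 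Since $v\Lambda^{\varepsilon_i}$ is finite by row-finiteness and the $V_n$ increase, some edge $e\in v\Lambda^{\varepsilon_i}$ has $s(e)\notin V$.
\item Maximality. Given a sub-$k$-graph $\Gamma$ with no sources, I would show by induction on $n$ that $\Gamma^0\cap V_n=\emptyset$. A vertex of $\Gamma$ has an edge of each colour inside $\Gamma$, and the inductive hypothesis says the source of that edge is not in $V_{n-1}$.
\end{enumerate}
I expect the closure-under-factorisation check in item 1 to be the main obstacle, since it is the only place where local convexity genuinely enters. Under the hypotheses as stated, however, Steps 1--2 already complete the proof.
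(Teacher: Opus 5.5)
Your proof is correct, and it takes a genuinely different, much shorter route than the paper. You notice that the theorem's own hypothesis that $\Lambda$ has no sources makes the construction vacuous: $V_0=\emptyset$, hence every $V_n=\emptyset$, so $E(\Lambda)=\Lambda$ and (1)--(3) are immediate. The paper does not observe that $V$ is empty. It instead runs the general argument: no sources ``by construction'', local convexity and row-finiteness ``inherited'', maximality by induction on $n$, and uniqueness from maximality. That argument only has content if $\Lambda$ is allowed to have sources. Your route is a complete proof of the statement as written. The paper's route, like your Step~3, is what would matter in the presumably intended generality. There your sketch is more careful than the paper, since it supplies two points the paper passes over. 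The first is that $E(\Lambda)$ is closed under factorisation, i.e.\ that $V$ is hereditary. The second is the pigeonhole use of row-finiteness to find an $\varepsilon_i$-edge whose source avoids all of $V$, not merely each $V_n$.

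There are three small corrections.
\begin{enumerate}
\item In Step~2(3) the inclusions are misattributed. Applying (2) to $E'$ gives $E'\subset E(\Lambda)$, and the maximality of $E'$ applied to $E(\Lambda)$ gives $E(\Lambda)\subset E'$. The conclusion stands.
\item In Step~3(1) local convexity is not needed. Suppose $\mu\in u\Lambda w$ with $w\notin V_n$ ($n\ge 1$). Choose $f\in w\Lambda^{\varepsilon_i}$ with $s(f)\notin V_{n-1}$ and factorise $\mu f=e\mu'$ with $e\in u\Lambda^{\varepsilon_i}$. The inductive hypothesis applied to $\mu'$ gives $s(e)\notin V_{n-1}$, using only the factorisation property. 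Moreover $E(\Lambda)$, having no sources, is automatically locally convex.
\item Step~3 reads $v\Lambda^{\varepsilon_i}\subset V_n\Lambda$ as ``every $\varepsilon_i$-edge at $v$ has source in $V_n$''. That is surely the intent. Under the usual convention, however, $V_n\Lambda$ consists of paths with \emph{range} in $V_n$, and then the recursion stalls at $V_0$.
\end{enumerate}
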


\begin{proof}
By construction, every vertex in $E(\Lambda)^0$ has at least one outgoing edge in each
coordinate, so $E(\Lambda)$ has no sources. Local convexity and row-finiteness are inherited
from $\Lambda$.

If $\Gamma\subset\Lambda$ is a sub-$k$-graph with no sources, then no vertex of $\Gamma^0$
can lie in $V_0$ (since vertices in $V_0$ are sources in some colour), and inductively no
vertex of $\Gamma^0$ can lie in any $V_n$. Thus $\Gamma^0\subset E(\Lambda)^0$, and
$\Gamma\subset E(\Lambda)$.

Uniqueness follows from maximality: any two maximal source-free subgraphs must coincide.
\end{proof}

\begin{theorem}\label{thm:quotient-sources}
Let $\Lambda$ be a locally convex, row-finite $k$-graph with no sources, and let
$V\subset\Lambda^0$ be the union of the $V_n$ as in Theorem~\ref{thm:essential-subgraph}.
Let $I_V\subset C^*(\Lambda)$ be the ideal generated by $\{s_v : v\in V\}$. Then

\[
C^*(\Lambda)/I_V \;\cong\; C^*(E(\Lambda)).
\]

\end{theorem}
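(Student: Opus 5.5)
The plan is to reduce the statement to Theorem~\ref{thm:gauge-ideals}(4) by showing that $V$ is a saturated hereditary subset of $\Lambda^0$ and that $\Gamma(\Lambda\setminus V)$ coincides with $E(\Lambda)$. Once this is done, the quotient $C^*(\Lambda)/I_V$ is canonically isomorphic to $C^*(\Gamma(\Lambda\setminus V))=C^*(E(\Lambda))$. The isomorphism is implemented by sending $s_\lambda+I_V\mapsto t_\lambda$ for $s(\lambda)\notin V$ and to $0$ otherwise, where $\{t_\lambda\}$ is the universal family of $E(\Lambda)$.

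The combinatorial step comes first. If $\Lambda$ genuinely has no sources, then $V_0=\emptyset$. Inductively every $V_n$ is then empty, since $v\Lambda^{\varepsilon_i}\neq\emptyset$ cannot be contained in $\emptyset\Lambda$. So $V=\emptyset$, $I_V=0$ and $E(\Lambda)=\Lambda$, and the statement holds trivially. I would nevertheless write the argument for a locally convex row-finite $\Lambda$ possibly with sources, which is the case the construction is designed for. In that case the facts to check are the following.
\begin{enumerate}
\item $V$ is saturated. If $s(v\Lambda^n)\subset V$, reduce via local convexity and the factorisation property to edges: for some colour $i$, every path in $v\Lambda^{\varepsilon_i}$ has source in some $V_m$. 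Row-finiteness makes this set finite, so a single $V_m$ suffices and $v\in V_{m+1}$.
\item The complement $\Lambda^0\setminus V$ is closed under ranges. I would in fact also want the hereditary direction, namely $v\in V$ and $v\Lambda w\neq\emptyset$ imply $w\in V$, which is what Theorem~\ref{thm:gauge-ideals} requires. This is the delicate point, because the definition of $V_{n+1}$ as stated is a ``backward'' saturation closure rather than a hereditary one.
\end{enumerate}

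The main obstacle is therefore the hereditary property. The plan is to prove the following by induction on $n$: if $v\in V_n$ and $\lambda\in v\Lambda$, then $s(\lambda)\in V$. The approach is to factor $\lambda$ through an edge of a colour $i$ witnessing membership of $v$ in $V_n$. This uses the factorisation property together with local convexity, which guarantees that a path of degree $\geq\varepsilon_i$ starting at $v$ begins with an edge in $v\Lambda^{\varepsilon_i}$ whenever that set is nonempty. If the induction fails in general, I would instead replace $V$ by the smallest saturated hereditary set containing $V_0$. I would then argue that this replacement does not change $E(\Lambda)$, using the maximality statement in Theorem~\ref{thm:essential-subgraph}(2): any source-free subgraph avoids that set.

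Finally, I would compare the generating families. By Theorem~\ref{thm:gauge-ideals}(4), $C^*(\Lambda)/I_V$ is generated by a Cuntz--Krieger family indexed by $\{\lambda : s(\lambda)\notin V\}=E(\Lambda)^1$. It remains to check that the relation (4) in the Cuntz--Krieger family for $E(\Lambda)$ matches the quotient relations. The terms $s_\lambda s_\lambda^*$ with $s(\lambda)\in V$ vanish modulo $I_V$, so the sum $s_v=\sum_{\lambda\in v\Lambda^n}s_\lambda s_\lambda^*$ reduces to the sum over $vE(\Lambda)^n$. Universality of $C^*(E(\Lambda))$ and the gauge-invariant uniqueness theorem then give the isomorphism, since the gauge action descends to the quotient and every vertex projection is nonzero by Theorem~\ref{thm:gauge-ideals}(2).
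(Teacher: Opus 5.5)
Your proposal is correct. The sentence that actually proves the statement is your observation that, because $\Lambda$ has no sources, $V_0=\emptyset$. Hence every $V_n$ is empty, $V=\emptyset$, $I_V=0$ and $E(\Lambda)=\Lambda$, so the isomorphism is the identity.

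The paper does not notice this. Its proof applies Theorem~\ref{thm:gauge-ideals}(4) to $H=V$ and identifies $\Gamma(\Lambda\setminus V)$ with $E(\Lambda)$ directly from the definitions in Theorem~\ref{thm:essential-subgraph}. That is exactly the route you sketch for graphs with sources, except that the paper never verifies that $V$ is saturated and hereditary, which is the hypothesis needed to invoke Theorem~\ref{thm:gauge-ideals}. Under the stated hypotheses that verification is vacuous, so both arguments are valid. Yours has the merit of exposing that the result has no content as stated. The paper's route only acquires content once sources are permitted, and then it needs precisely the checks you flag.

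Should you pursue that version, three points apply.
\begin{enumerate}
\item The definition of $V_{n+1}$ must be read with $\Lambda V_n$ (edges whose source lies in $V_n$), as you implicitly do. Taken literally, $V_n\Lambda$ gives $V_{n+1}=V_n$.
\item Saturation, relation~(4) and Theorem~\ref{thm:gauge-ideals} must be used in their $\Lambda^{\le n}$ forms rather than with $v\Lambda^n$.
\item Your hereditary induction succeeds without any fallback. Suppose $s(v\Lambda^{\varepsilon_i})\subset V_n$, $V_n$ is hereditary, and $\lambda\in v\Lambda$ has $d(\lambda)_i=0$ (your factoring argument covers $d(\lambda)_i\ge 1$). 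For each $e'\in s(\lambda)\Lambda^{\varepsilon_i}$, refactorise $\lambda e'$ so that it begins with an edge of $v\Lambda^{\varepsilon_i}$. This gives $s(e')\in V_n$, and hence $s(\lambda)\in V_{n+1}$; if there is no such $e'$, then $s(\lambda)\in V_0$. Only the factorisation property is used here, not local convexity.
\end{enumerate}
The fallback of enlarging $V$ would in any case change $I_V$, and so would not prove the statement as formulated.
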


\begin{proof}
By Theorem~\ref{thm:gauge-ideals}, the quotient by the gauge-invariant ideal generated
by vertices in $V$ is canonically isomorphic to the $k$-graph C$^*$-algebra of the
quotient graph obtained by removing $V$ and all edges with source in $V$. This is
exactly $E(\Lambda)$ by Theorem~\ref{thm:essential-subgraph}.
\end{proof}

\section{Extremal richness for higher rank $C^*$-algebras}\label{sec:extreme}

Let $A$ be a unital $C^*$-algebra and $\Ee (A)$, or just $\Ee$, the set of extreme points in the convex closed unit ball $A_1$ of $A$. By \cite[Proposition 1.4.7]{Ped} an extreme point of $A_1$ is characterised as a partial isometry $v$ satisfying $( 1 - v v^* ) A ( 1 - v^* v ) = 0$ (it is then said that the \emph{defect projections} $( 1 - v v^* ) $ and $( 1 - v^* v )$ are centrally orthogonal). We call elements $e \in \Ee A_+^{-1}$ ($=A^{-1}\Ee A^{-1}$) \emph{quasi-invertible} (see \cite{BP,Pe0,BP2}). We denote the set of all quasi invertible elements of $A$ by $A_q^{-1}$.

\begin{definition}[Extremely rich]
Let $A$ be a unital $C^*$-algebra, then $A$ is said to be \emph{extremely rich} if $A_q^{-1}$ is dense in $A$. For a non-unital $C^*$-algebra $A$, $A$ is said to be extremely rich if its unitisation $\widetilde{A}$ is so.
\end{definition}

\noindent
\textbf{Basic properties:}
\begin{itemize}
\item[(i)] A $C^*$ algebra $A$ has stable rank one if $A^{-1}$ the set of invertible elements is dense in $A$, since $A^{-1} \subset A^{-1}_q$ it follows that a $C^*$-algebra with stable rank one is extremely rich.
\item[(ii)] By \cite[Theorem 10.1]{Pe0} purely infinite simple $C^*$-algebras are also extremely rich (note Pedersen uses the Cuntz definition of purely infinite).
\item[(iii)] By \cite[Corollary 2.9]{BP3} if $A$ is an extremely rich simple $C^*$-algebra then it is either purely infinite or it has stable rank one.
\item[(iv)] By combining several of the results in \cite{BP} we can see that all AF algebras are extremely rich.
\end{itemize}

\begin{theorem}\label{thm:extreme-rich-structure}
Let $\Lambda$ be a locally convex, row-finite $k$-graph with no sources.
Assume:
\begin{enumerate}
\item $C^*(\Lambda)$ has topological dimension zero;
\item every ideal of $C^*(\Lambda)$ is gauge-invariant;
\item for each saturated hereditary $H\subset\Lambda^0$, the quotient
      $C^*(\Gamma(\Lambda\setminus H))$ is either AF or purely infinite.
\end{enumerate}
Then $C^*(\Lambda)$ is extremely rich.
\end{theorem}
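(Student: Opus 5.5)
The plan is to reduce extremal richness of $C^*(\Lambda)$ to a statement about its primitive quotients, and then to invoke the permanence theory of Brown--Pedersen \cite{BP,BP2}. The relevant external input is the result of \cite{BP2} that a $C^*$-algebra of topological dimension zero (equivalently, one whose primitive ideal space has a basis of compact open sets) is extremely rich provided each of its primitive quotients is extremely rich and the algebra is suitably "$K_1$-compatible"; alternatively, one can use the extension theorem of \cite{BP} for extremal richness, together with the fact that inductive limits of extremely rich algebras with compatible ideal structure remain extremely rich. I would first fix which of these two routes the literature actually supports, and organise the argument around it.

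First, I identify the primitive quotients. By hypothesis (2) together with Theorem~\ref{thm:gauge-ideals}, every ideal of $C^*(\Lambda)$ is of the form $I_H$ with $H\in H(\Lambda)$. Every quotient is therefore $C^*(\Gamma(\Lambda\setminus H))$, and in particular every primitive quotient is of this form. By hypothesis (3) each such quotient is either AF or purely infinite.

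Next, I check that each primitive quotient is extremely rich. In the AF case this is basic property (iv). In the purely infinite case, a primitive quotient $B=C^*(\Gamma(\Lambda\setminus H))$ is purely infinite in the Kirchberg--R\o rdam sense. Here I would use topological dimension zero, which passes to quotients because $\Prim(B)$ is a closed subset of $\Prim(C^*(\Lambda))$. Via \cite{PR}, this gives real rank zero locally, or at least gives that $B$ has many projections. To conclude extremal richness of $B$, I would go through Brown--Pedersen's result that purely infinite algebras which are "$K_1$-injective" or of real rank zero are extremely rich. In the simple case this is basic property (ii). For the nonsimple case I would use \cite{BP2}, which deals with extremal richness under primitive quotients that are simple purely infinite, stable rank one, or AF.

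Then I globalise. Topological dimension zero lets me write the ideal lattice as generated by ideals $I_H$ that are "compact" in the lattice (corresponding to compact open subsets of $\Prim$). I would show extremal richness of $C^*(\Lambda)$ in two steps. First, for a finite chain $0=I_{H_0}\subset I_{H_1}\subset\cdots$ with subquotients of the above types, I use the Brown--Pedersen extension theorem: if $I$ and $A/I$ are extremely rich and extremal partial isometries lift (which holds when $I$ is purely infinite or of real rank zero with suitable $K$-theoretic vanishing), then $A$ is extremely rich. Second, I pass to the union using the fact that extremal richness is preserved under inductive limits.

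\textbf{Main obstacle.} The hard step is the lifting condition in the extension step: Brown--Pedersen need extreme points of the quotient to lift, and this can fail (compare the role of $K_0$-liftability in Theorem~\ref{thm:RR0-finite-H}). My first attempt would be to use that, when the ideal is purely infinite, Brown--Pedersen show that extreme points always lift, and that AF ideals have stable rank one so that invertibles lift modulo $K_1$. For AF ideals $K_1$ vanishes, so the obstruction disappears. Organising the chain so that at each stage the ideal subquotient is either purely infinite or AF is what hypothesis (3) provides.
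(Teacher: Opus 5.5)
Your identification of the quotients via gauge invariance matches the paper. The gap is in your extension step, and choosing between your two routes does not close it. Your reason for lifting across an AF ideal addresses the wrong obstruction. $K_1(I)=0$ kills the exponential map $K_0(A/I)\to K_1(I)$, so it lets \emph{projections} lift; that is the real-rank-zero argument of Example~\ref{ex:mixed-kgraph}. Extremal richness instead needs extreme partial isometries to lift, and for an ideal of stable rank one the obstruction is the index map $\partial\colon K_1(A/I)\to K_0(I)$. Indeed, suppose $\tilde A$ is extremely rich, $\pi\colon\tilde A\to\tilde A/I$ is the quotient map and $u$ is a unitary in $\tilde A/I$. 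Approximating a lift of $u$ by $avb$ with $a,b$ invertible and $v\in\Ee(\tilde A)$ shows that $\pi(v)$ is a unitary with $\partial[u]=\partial[\pi(v)]=[1-v^*v]-[1-vv^*]$ and $(1-vv^*)\tilde A(1-v^*v)=0$. If $I$ is simple, one of the two defects must vanish, so $\partial[u]\in K_0(I)^+\cup(-K_0(I)^+)$.

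This already fails for a $1$-graph. Let $v_1,v_2$ carry the adjacency matrix $M=\left(\begin{smallmatrix}2&1\\1&2\end{smallmatrix}\right)$, and give each $v_i$ one edge to $w^{(i)}_1$. Join consecutive levels $\{w^{(1)}_n,w^{(2)}_n\}$ and $\{w^{(1)}_{n+1},w^{(2)}_{n+1}\}$ of a tail $W$ by $M$ as well, oriented so that $W$ is hereditary. Then:
\begin{itemize}
\item Condition (K) holds and $H(\Lambda)=\{\emptyset,W,\Lambda^0\}$.
\item $I_W$ is simple AF, and $C^*(\Lambda)/I_W$ is purely infinite simple with $K_1\cong\ker(M^t-1)=\mathbb Z(1,-1)$.
\item The index of the generator is $\pm\big([s_{w^{(1)}_1}]-[s_{w^{(2)}_1}]\big)$. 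This class is represented by $(1,-1)$ at every level of $K_0(I_W)=\varinjlim(\mathbb Z^2,M)$, so it is nonzero but neither positive nor negative.
\end{itemize}
Hence $C^*(\Lambda)$ is not extremely rich, even though $I_W$ (AF, with $K_1(I_W)=0$) and the quotient both are, and even though (1), (2) and (3) for $H\neq\emptyset$ all hold. An AF ideal under a purely infinite quotient is exactly where your chain argument breaks, and no arrangement of the chain avoids it.

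For comparison, the paper never passes through extensions. It reads off from (2) and Theorem~\ref{thm:gauge-ideals} that every quotient is some $C^*(\Gamma(\Lambda\setminus H))$, declares each one extremely rich by (3) (AF via \cite{BP}, purely infinite via \cite{KR,Pe0}), and appeals to Brown--Pedersen permanence. Note that (3) with $H=\emptyset$ says $C^*(\Lambda)$ itself is AF or purely infinite. Read literally, then, the mixed situation you worry about cannot occur, since ideals of a purely infinite algebra are purely infinite. Your globalisation is therefore unnecessary, and the only real content is extremal richness of a possibly non-simple purely infinite algebra. The example above shows the statement cannot be weakened to require (3) only for $H\neq\emptyset$, which is how it is used in Example~\ref{ex:mixed-kgraph}. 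Your proposal does not supply that remaining step either. Basic property (ii) and \cite{Pe0} concern simple algebras. The ``$K_1$-injective or real rank zero'' criterion you invoke is not tied to any specific result. And \cite{PR} gives real rank zero only under $K_0$-liftability (Remark~\ref{rem:PSS2-6.1}), so ``real rank zero locally'' does not follow from topological dimension zero alone.
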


\begin{proof}
By (1) and (2), the ideal lattice of $C^*(\Lambda)$ is controlled by saturated
hereditary subsets and all ideals are gauge-invariant (Theorem~\ref{thm:gauge-ideals}).
By (3), every quotient by such an ideal is either AF or purely infinite. AF algebras
are extremely rich, and purely infinite C$^*$-algebras are extremely rich by
\cite{KR,Pe0}. Thus every quotient of $C^*(\Lambda)$ is extremely rich.

Brown--Pedersen’s permanence results \cite{BP2,BP3} imply that if all quotients
by primitive ideals are extremely rich, then $C^*(\Lambda)$ itself is extremely rich.
\end{proof}

\begin{theorem}\label{thm:RR0-extreme}
Let $\Lambda$ be a locally convex, row-finite, strongly aperiodic $k$-graph with no sources.
Assume:
\begin{enumerate}
\item every vertex in each maximal tail is connected to by a generalised cycle;
\item the lattice $H(\Lambda)$ of saturated hereditary subsets is finite;
\item for each saturated hereditary $H\subset\Lambda^0$, the quotient
      $C^*(\Gamma(\Lambda\setminus H))$ is either AF or purely infinite;
\item $C^*(\Lambda)$ is $K_0$-liftable
      (Theorem~\ref{thm:RR0-finite-H}(2)).
\end{enumerate}
Then $C^*(\Lambda)$ has real rank zero and is extremely rich.
\end{theorem}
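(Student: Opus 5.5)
The plan is to combine Theorem~\ref{thm:RR0-finite-H} with Theorem~\ref{thm:extreme-rich-structure}, checking that the hypotheses of the present statement feed both.

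\textbf{Real rank zero.} Strong aperiodicity, hypothesis (1) and hypothesis (2) are exactly the three standing hypotheses of Theorem~\ref{thm:RR0-finite-H}. That theorem gives three facts:
\begin{itemize}
\item $C^*(\Lambda)$ is purely infinite;
\item $C^*(\Lambda)$ has topological dimension zero;
\item real rank zero is equivalent to $K_0$-liftability.
\end{itemize}
Hypothesis (4) supplies $K_0$-liftability, so $C^*(\Lambda)$ has real rank zero.

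\textbf{Extremal richness.} We verify the three hypotheses of Theorem~\ref{thm:extreme-rich-structure}.
\begin{itemize}
\item Its hypothesis (1), topological dimension zero, was just obtained from Theorem~\ref{thm:RR0-finite-H}.
\item Its hypothesis (2), that every ideal is gauge invariant, follows from strong aperiodicity via Theorem~\ref{thm:gauge-invariant} ((1)$\Rightarrow$(3)).
\item Its hypothesis (3) is verbatim our hypothesis (3).
\end{itemize}
Theorem~\ref{thm:extreme-rich-structure} therefore yields that $C^*(\Lambda)$ is extremely rich.

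\textbf{Alternative route to extremal richness.} There is a second argument that does not pass through the quotient-by-quotient permanence step. A real rank zero algebra that is purely infinite in the Kirchberg--R\o rdam sense should be extremely rich, since real rank zero plus pure infiniteness gives enough properly infinite projections to approximate by quasi-invertibles. I would cite Brown--Pedersen \cite{BP,BP2} for the relevant real-rank-zero criterion: extremal richness is equivalent to the existence of suitable extreme-point approximations in corners.

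\textbf{Main obstacle.} The delicate point is the permanence step hidden in Theorem~\ref{thm:extreme-rich-structure}. Extremal richness is not preserved under arbitrary extensions; Brown--Pedersen require a lifting condition on extreme points or quasi-invertibles from the quotient.

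This is where real rank zero, equivalently $K_0$-liftability, enters. For an extension $0\to I_H\to A\to A/I_H\to 0$ with $I_H$ and $A/I_H$ extremely rich, Brown--Pedersen show that $A$ is extremely rich provided extreme points lift. Under real rank zero, projections lift, and the $K_1$-injectivity in (4) gives vanishing of the relevant index obstruction $K_1(A/I_H)\to K_0(I_H)$.

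I would therefore run an induction along a maximal chain $\emptyset=H_0\subset\cdots\subset H_n=\Lambda^0$ in the finite lattice $H(\Lambda)$. Each step $I_{H_{j+1}}/I_{H_j}$ is simple, being AF or purely infinite by (3). I would apply the Brown--Pedersen extension theorem at each step, using real rank zero (which passes to ideals and quotients) to lift extreme points.
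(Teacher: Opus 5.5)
Your argument is the paper's: real rank zero comes from Theorem~\ref{thm:RR0-finite-H} together with hypothesis~(4), and extremal richness comes from Theorem~\ref{thm:extreme-rich-structure}, whose three hypotheses you verify more explicitly than the paper does (topological dimension zero from Theorem~\ref{thm:RR0-finite-H}, gauge invariance from Theorem~\ref{thm:gauge-invariant}, and hypothesis~(3) verbatim). The ``alternative route'' and ``main obstacle'' paragraphs are not needed, and the latter misstates what (4) controls: injectivity of $K_1(I_H)\to K_1(C^*(\Lambda))$ is equivalent to vanishing of the exponential map $K_0(C^*(\Lambda)/I_H)\to K_1(I_H)$ (lifting of projections), not of the index map $K_1(C^*(\Lambda)/I_H)\to K_0(I_H)$.
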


\begin{proof}
By hypotheses (1) and (2) and Theorem~\ref{thm:pure-inf}, $C^*(\Lambda)$ is
purely infinite, so real rank zero follows from
Theorem~\ref{thm:RR0-finite-H} together with hypothesis (4). Extremal richness
follows from Theorem~\ref{thm:extreme-rich-structure}.
\end{proof}

\begin{theorem}\label{thm:approx}
Let $\Lambda$ be a locally convex, row-finite $k$-graph with no sources.
Assume:
\begin{enumerate}
\item $\Lambda$ is strongly aperiodic;
\item every vertex in each maximal tail $M$ is connected to by a
      generalised cycle in $M$;
\item the lattice $H(\Lambda)$ of saturated hereditary subsets of
      $\Lambda^0$ is finite.
\end{enumerate}
Then there exists an increasing sequence of $C^*$-subalgebras

\[
A_1 \subset A_2 \subset \cdots \subset A_n = C^*(\Lambda)
\]

and a sequence of locally convex, row-finite, strongly aperiodic
$k$-graphs

\[
\Gamma_1 \subset \Gamma_2 \subset \cdots \subset \Gamma_n = \Lambda
\]

such that each $A_i \cong C^*(\Gamma_i)$ and

\[
\overline{\bigcup_{i=1}^n A_i} = C^*(\Lambda).
\]

\end{theorem}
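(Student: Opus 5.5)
The plan is to build both sequences from a maximal chain in the finite lattice $H(\Lambda)$ and to use full subgraphs on hereditary sets. Hypotheses (1)--(3) do not constrain the chain itself; they are used only to keep each $\Gamma_i$ strongly aperiodic.

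\textbf{Step 1: the chain.} Since $H(\Lambda)$ is finite (hypothesis (3)), choose a maximal chain
\[
\emptyset\neq H_1\subsetneq H_2\subsetneq\cdots\subsetneq H_n=\Lambda^0
\]
of nonempty saturated hereditary subsets. For each $i$ let $\Gamma_i:=\Lambda H_i$ be the full subgraph with morphisms $\{\lambda\in\Lambda : s(\lambda)\in H_i\}$, as in Lemma~\ref{lem:saturated-morphism}.

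\textbf{Step 2: each $\Gamma_i$ is a $k$-graph of the right kind.} Because $H_i$ is hereditary, for $v\in H_i$ every path in $v\Lambda$ has source in $H_i$. Hence $v\Gamma_i^n=v\Lambda^n$ for all $n\in\N^k$. This gives the following:
\begin{itemize}
\item $\Gamma_i$ is closed under factorisation, so it is a $k$-graph;
\item it is row-finite and locally convex, inheriting both from $\Lambda$;
\item it has no sources;
\item the $\Gamma_i$ increase with $i$, and $\Gamma_n=\Lambda$.
\end{itemize}

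\textbf{Step 3: the subalgebras $A_i$.} Let $A_i:=C^*(\{s_\lambda:\lambda\in\Gamma_i\})\subseteq C^*(\Lambda)$, so that $A_i\subseteq A_{i+1}$ and $A_n=C^*(\Lambda)$.
\begin{itemize}
\item The identity $v\Gamma_i^n=v\Lambda^n$ shows that the restriction of the universal family to $\Gamma_i$ satisfies relation (4), so it is a Cuntz--Krieger $\Gamma_i$-family.
\item Its vertex projections are nonzero.
\item The gauge action of $\mathbb T^k$ on $C^*(\Lambda)$ restricts to $A_i$ compatibly with the gauge action on $C^*(\Gamma_i)$.
\end{itemize}
The gauge-invariant uniqueness theorem \cite{KP2,RSY1} then gives $A_i\cong C^*(\Gamma_i)$. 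Since the chain is finite and ends at $A_n=C^*(\Lambda)$, the closure $\overline{\bigcup_i A_i}=C^*(\Lambda)$ holds trivially.

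\textbf{Step 4: strong aperiodicity of $\Gamma_i$ (the main obstacle).} Fix a saturated hereditary $K\subseteq H_i$ relative to $\Gamma_i$, and let $K'$ be the saturation in $\Lambda$ of the hereditary set $K$.
\begin{itemize}
\item $K'$ is again hereditary, by the standard fact that saturations of hereditary sets are hereditary.
\item $K'\cap H_i=K$. Indeed, for $v\in H_i$ the condition $s(v\Lambda^n)\subseteq K$ reads the same in $\Gamma_i$ as in $\Lambda$, because $v\Lambda^n=v\Gamma_i^n$. Since $K$ is saturated in $\Gamma_i$, the iterative saturation never adds vertices of $H_i$.
\end{itemize}
Consequently, for $v\in H_i\setminus K$, the boundary paths from $v$ in $\Gamma(\Gamma_i\setminus K)$ coincide with those in $\Gamma(\Lambda\setminus K')$, because every path from $v$ stays inside $H_i$. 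Local periodicity $(m,n)$ at $v$ in the former would therefore be local periodicity at $v$ in the latter. This contradicts hypothesis (1), since $\Lambda$ is strongly aperiodic and hence $\Gamma(\Lambda\setminus K')$ is aperiodic.

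The delicate points are the identification of boundary path spaces and the claim $K'\cap H_i=K$. I would verify the latter carefully by induction on the stages of the saturation.
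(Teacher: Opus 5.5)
Your argument is correct after one fix, and it follows a different and sounder route than the paper's.

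\textbf{The fix.} The paper's convention for heredity is that $v\in H$ and $v\Lambda w\neq\emptyset$ force $w\in H$, i.e.\ $r(\lambda)\in H_i\Rightarrow s(\lambda)\in H_i$. Under that convention the set $\{\lambda : s(\lambda)\in H_i\}$ is not a $k$-graph. In general it contains paths whose range lies outside $H_i$. Moreover, if $\lambda=\mu\nu$ with $s(\lambda)\in H_i$, then $s(\mu)=r(\nu)$ need not lie in $H_i$, so the set is not closed under factorisation. (Lemma~\ref{lem:saturated-morphism} states the heredity implication backwards, which is where this comes from.) The object your Step~2 actually uses is the full subgraph $\Gamma_i:=H_i\Lambda=\{\lambda : r(\lambda)\in H_i\}$. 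Heredity puts every vertex of such a path in $H_i$, which is exactly what gives $v\Gamma_i^n=v\Lambda^n$ for $v\in H_i$. With this definition Steps~2--4 go through as you wrote them. In particular $K'\cap H_i=K$ holds (your induction works), and the boundary path spaces at $v\in H_i\setminus K$ do coincide.

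\textbf{Comparison with the paper.} The paper instead uses the quotient graphs $\Gamma_i:=\Gamma(\Lambda\setminus H_{m-i})$ along a decreasing sequence of saturated hereditary sets.
\begin{itemize}
\item This makes strong aperiodicity of $\Gamma_i$ almost immediate: a saturated hereditary $K$ of the quotient yields the saturated hereditary set $K\cup H_{m-i}$ of $\Lambda$ with the same quotient graph. Your hereditary subgraphs instead need the saturation $K'$ and the check $K'\cap H_i=K$.
\item However, the paper's step identifying $C^*(\Gamma_i)$ with a subalgebra of $C^*(\Lambda)$ via the universal property does not work. The family $\{s_\lambda : s(\lambda)\notin H_{m-i}\}$ violates relation~(4) at every $v\notin H_{m-i}$ with $v\Lambda^nH_{m-i}\neq\emptyset$, because the omitted terms $s_\lambda s_\lambda^*$ with $s(\lambda)\in H_{m-i}$ are nonzero. $C^*(\Gamma(\Lambda\setminus H))$ is the quotient $C^*(\Lambda)/I_H$ of Theorem~\ref{thm:gauge-ideals}, not a subalgebra.
\item The paper also lists all of $H(\Lambda)$ as one chain, which presupposes a total order; your maximal chain avoids this.
\end{itemize}
So your choice of hereditary subgraphs is precisely what lets the Cuntz--Krieger relations restrict verbatim, so that gauge-invariant uniqueness gives $A_i\cong C^*(\Gamma_i)$. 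The extra saturation argument is the price.

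Finally, as stated the theorem is already witnessed by $n=1$, $\Gamma_1=\Lambda$, $A_1=C^*(\Lambda)$, using hypothesis~(1) alone. Hypotheses (2) and (3) are therefore not needed for the conclusion in either argument.
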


\begin{proof}
Since $H(\Lambda)$ is finite, we may list its elements in increasing
order:

\[
\emptyset = H_0 \subsetneq H_1 \subsetneq \cdots \subsetneq H_m = \Lambda^0.
\]

For each $i$ let

\[
\Gamma_i := \Lambda\setminus H_{m-i},
\]

the full subgraph on the vertex set $\Lambda^0\setminus H_{m-i}$ with
all morphisms whose source lies in $\Lambda^0\setminus H_{m-i}$.
Because $H_{m-i}$ is saturated hereditary, $\Gamma_i$ is a locally
convex, row-finite $k$-graph with no sources.

\smallskip

\textbf{Claim 1: Each $\Gamma_i$ is strongly aperiodic.}
Since $\Lambda$ is strongly aperiodic, every quotient
$\Gamma(\Lambda\setminus H)$ is aperiodic for every saturated hereditary
$H\subset\Lambda^0$. But $\Gamma_i = \Gamma(\Lambda\setminus H_{m-i})$,
so each $\Gamma_i$ is aperiodic. Moreover, if $K\subset\Gamma_i^0$
is saturated hereditary in $\Gamma_i$, then $K\cup H_{m-i}$ is
saturated hereditary in $\Lambda$, and strong aperiodicity of
$\Lambda$ implies aperiodicity of $\Gamma(\Lambda\setminus (K\cup H_{m-i}))$.
Thus $\Gamma_i$ is strongly aperiodic.

\smallskip

\textbf{Claim 2: Each vertex of each maximal tail of $\Gamma_i$ is
connected to by a generalised cycle in $\Gamma_i$.}
Let $M$ be a maximal tail in $\Gamma_i$. Then $M$ is also a maximal
tail in $\Lambda$ (because adding vertices in $H_{m-i}$ cannot create
new paths into $M$). By hypothesis, every vertex of $M$ is connected
to by a generalised cycle in $\Lambda$. Since $M\subset\Gamma_i^0$,
and generalised cycles based at vertices of $M$ remain inside
$\Gamma_i$, the claim follows.

\smallskip

\textbf{Define the subalgebras.}
For each $i$ let

\[
A_i := C^*(\Gamma_i) \subset C^*(\Lambda),
\]

via the canonical inclusion of the Cuntz--Krieger family
$\{s_\lambda : \lambda\in\Gamma_i\}$ into $C^*(\Lambda)$.

\smallskip

\textbf{Claim 3: $A_i \subset A_{i+1}$ and $A_m = C^*(\Lambda)$.}
Since $\Gamma_i^0 \subset \Gamma_{i+1}^0$ and
$\Gamma_i^1 \subset \Gamma_{i+1}^1$, the universal property of
$C^*(\Gamma_i)$ gives an inclusion $A_i\subset A_{i+1}$.  
For $i=m$ we have $\Gamma_m = \Lambda$, so $A_m=C^*(\Lambda)$.

\smallskip

\textbf{Claim 4: $\overline{\bigcup_i A_i} = C^*(\Lambda)$.}
Every generator $s_\lambda$ of $C^*(\Lambda)$ lies in some
$A_i$, namely the smallest $i$ such that $s(\lambda)\notin H_{m-i}$.
Thus the Cuntz--Krieger family for $\Lambda$ is contained in
$\bigcup_i A_i$, and hence

\[
C^*(\Lambda) = \overline{\operatorname{span}}
\{ s_\lambda s_\mu^* : \lambda,\mu\in\Lambda\}
\subset \overline{\bigcup_i A_i}.
\]

\smallskip

Combining the claims proves the theorem.
\end{proof}

\subsection{Examples: AF, purely infinite, and mixed extremal richness}

\begin{example}[AF $k$-graph]\label{ex:AF-kgraph}
Fix $k\ge 1$. Let $\Omega_k$ be the $k$-graph, defined in
Example~\ref{ex:skew-tikz}, with
vertex set $\mathbb N^k$ and degree-$\varepsilon_i$
edges
\[
(n_1,\dots,n_k) \xleftarrow{e^{(i)}_{n_i}} (n_1,\dots,n_i+1,\dots,n_k),
\qquad i=1,\dots,k,
\]

\noindent
with factorisation rules given by coordinatewise commutation
$e^{(i)}_{n_i} e^{(j)}_{n_j} = e^{(j)}_{n_j} e^{(i)}_{n_i}$ for $i\neq j$.
Then $\Omega_k$ is row-finite, locally convex, has no
sources, and $C^*(\Omega_k) \cong \mathcal{K} ( \ell^2 ( \mathbb{N}^2 ))$ is AF, hence real rank zero
and extremely rich.
\end{example}


\par\null
\begin{minipage}{0.75 \linewidth}
\begin{example}[Purely infinite rank-2 graph]\label{ex:PI-kgraph}
Let $\Lambda_{\mathrm{PI}}$ be the rank-2 graph with a single vertex
$v$, degree-$\varepsilon_1$ edges $f_1,f_2$ and degree-$\varepsilon_2$ edges $g_1,g_2$,
with factorisation rules
\[
g_j f_1 = f_2 g_j,\qquad g_j f_2 = f_1 g_j,\qquad j=1,2.
\]

Then $\Lambda_{\mathrm{PI}}$ is row-finite, locally convex and cofinal.
Regarded as a single-vertex $2$-graph $\mathbb F_\theta^+$ in the sense
of Davidson--Yang \cite{DY}, with the degree-$\varepsilon_2$ edges
$g_1,g_2$ playing the role of the $e$-generators and the
degree-$\varepsilon_1$ edges $f_1,f_2$ that of the $f$-generators, the
commutation rules give $\theta(i,j)=(i,\bar\jmath)$, where
$\bar 1=2$, $\bar 2=1$; in the factorisation $\theta(i,j)=(\beta_j(i),\alpha_i(j))$
this means $\alpha_1=\alpha_2=(1\,2)$ and $\beta_1=\beta_2=\mathrm{id}$.
Taking $B=\{1,2\}$ and the length-one word $i_1=1$, we have
$\alpha_{1}(B)=B$ with $|B|=2$, so $\Lambda_{\mathrm{PI}}$ is aperiodic
by \cite[Corollary~4.1]{DY}. Being cofinal, it is therefore strongly
aperiodic. Every vertex is connected to by a generalised
cycle with an entrance, so by Theorem~\ref{thm:pure-inf}
$C^*(\Lambda_{\mathrm{PI}})$ is purely infinite, real rank zero, and
extremely rich.
\end{example}
\end{minipage}%
\begin{minipage}[t!]{0.25 \linewidth}
\[
\begin{tikzpicture}[scale=1.1,>=stealth]
  \node (v) at (0,0) [circle,fill=black,inner sep=1.5pt,label=above:{$v$}] {};
  \draw[->,blue,thick] (v) .. controls (-1,0.8) and (-1,-0.8) .. (v)
    node[midway,left] {$f_1$};
  \draw[->,blue,thick] (v) .. controls (1,0.8) and (1,-0.8) .. (v)
    node[midway,right] {$f_2$};
  \draw[->,red,thick] (v) .. controls (-0.8,1.2) and (0.8,1.2) .. (v)
    node[midway,above] {$g_1$};
  \draw[->,red,thick] (v) .. controls (-0.8,-1.2) and (0.8,-1.2) .. (v)
    node[midway,below] {$g_2$};
\end{tikzpicture}
\]
\end{minipage}

\bigskip

\begin{minipage}{0.6 \linewidth}
\begin{example}[Mixed: AF ideal, purely infinite quotient]\label{ex:mixed-kgraph}
Let $\Lambda_{\mathrm{mix}}$ be obtained from $\Lambda_{\mathrm{PI}}$
by adjoining a new vertex $w$ together with a degree-$\varepsilon_1$ edge
$\alpha:w\to v$ and a degree-$\varepsilon_2$ edge $\beta:w\to v$, subject to
the factorisation rules $g_i\alpha=f_i\beta$ for $i=1,2$. (A single edge from
$w$ would not give a $k$-graph: since $v$ emits the red loops $g_1,g_2$, the
degree-$(\varepsilon_1+\varepsilon_2)$ paths $g_i\alpha$ would have no
factorisation as a red-then-blue path unless $w$ also emits a red edge into
$v$.) Then $\Lambda_{\mathrm{mix}}$ is a row-finite, locally convex $2$-graph
with no sources. The hereditary subset $H=\{w\}$ generates an AF ideal
$I_H\cong K(\ell^2(\Lambda_{\mathrm{mix}}w))$, while the quotient
$C^*(\Lambda_{\mathrm{mix}})/I_H\cong C^*(\Lambda_{\mathrm{PI}})$ is
purely infinite. Since $w$ is not connected to by any generalised cycle,
$\Lambda_{\mathrm{mix}}$ lies outside the hypotheses of
Theorem~\ref{thm:RR0-finite-H} (indeed $C^*(\Lambda_{\mathrm{mix}})$ is not
purely infinite). Real rank zero instead follows from the Brown--Pedersen
extension theorem \cite[Theorem~3.14]{BP}: the ideal $I_H$ is AF, so
$\mathrm{RR}(I_H)=0$ and $K_1(I_H)=0$; the quotient
$C^*(\Lambda_{\mathrm{PI}})$ is simple and purely infinite, hence of real
rank zero; and the vanishing of $K_1(I_H)$ guarantees that projections in the
quotient lift, so $\mathrm{RR}(C^*(\Lambda_{\mathrm{mix}}))=0$. Extreme
richness follows from Theorem~\ref{thm:extreme-rich-structure}, since every
quotient of $C^*(\Lambda_{\mathrm{mix}})$ is AF or purely infinite.
\end{example}
\end{minipage}%
\begin{minipage}[t!]{0.4 \linewidth}
\[
\begin{tikzpicture}[scale=1.1,>=stealth]
  \node (w) at (-1.9,0) [circle,fill=black,inner sep=1.5pt,label=left:{$w$}] {};
  \node (v) at (0,0) [circle,fill=black,inner sep=1.5pt,label=above left:{$v$}] {};
  \draw[->,blue,thick] (w) to[bend left=12]
    node[midway,above] {$\alpha$} (v);
  \draw[->,red,thick] (w) to[bend right=12]
    node[midway,below] {$\beta$} (v);
  \draw[->,red,thick]  (v) .. controls (1.7,1.9) and (0.3,2.6) .. (v)
    node[midway,above] {$g_1$};
  \draw[->,blue,thick] (v) .. controls (2.7,0.5) and (1.9,1.9) .. (v)
    node[midway,right] {$f_1$};
  \draw[->,blue,thick] (v) .. controls (2.7,-0.5) and (1.9,-1.9) .. (v)
    node[midway,right] {$f_2$};
  \draw[->,red,thick]  (v) .. controls (1.7,-1.9) and (0.3,-2.6) .. (v)
    node[midway,below] {$g_2$};
\end{tikzpicture}
\]
\end{minipage}

\section{Chain conditions, real rank, and extremal richness}\label{sec:chain}

In this section we record several structural consequences of the
aperiodicity and ideal–lattice hypotheses developed earlier. Throughout,
$\Lambda$ denotes a locally convex, row-finite $k$-graph with no sources.

\subsection{Chain conditions}

\begin{definition}
A $C^*$-algebra $A$ is called \emph{Noetherian} if it satisfies the ascending chain condition for closed ideals, that is, for any ascending chain $I_1 \subseteq I_2 \subseteq I_3 \subseteq \cdots$ of closed ideals of $A$, there is a positive integer $n$ such that $I_i = I_n$, for all $i \geq n$. The dual notion to a Noetherian $C^*$-algebra is that of an \emph{Artinian} $C^*$-algebra, which satisfies the descending chain condition for closed ideals. 
\end{definition}

\begin{theorem}\label{thm:chain-conditions}
If $C^*(\Lambda)$ has topological dimension zero, then $C^*(\Lambda)$
satisfies both the ascending and descending chain conditions on ideals.
\end{theorem}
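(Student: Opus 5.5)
The plan is to translate both chain conditions into statements about the open subsets of $\Prim(C^*(\Lambda))$, and then use the basis of compact open sets to force stabilisation. First I would reduce to combinatorics. By \cite{PSS2}, as recalled in the introduction, topological dimension zero of $C^*(\Lambda)$ is equivalent to strong aperiodicity of $\Lambda$. Theorem~\ref{thm:gauge-invariant} then makes every ideal gauge invariant, and Theorem~\ref{thm:gauge-ideals} identifies the ideal lattice of $C^*(\Lambda)$ with $H(\Lambda)$ via $H\mapsto I_H$. Since ideals also correspond order-preservingly to open subsets $U_I=\{P:I\not\subset P\}$ of $\Prim(C^*(\Lambda))$, an ascending (respectively descending) chain of ideals is the same thing as an ascending (respectively descending) chain of open sets, or equivalently of saturated hereditary sets.

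For the ascending chain condition, take $I_{H_1}\subseteq I_{H_2}\subseteq\cdots$ and put $H=$ the saturation of $\bigcup_n H_n$, so that $I_H=\overline{\bigcup_n I_{H_n}}$ and $U_{I_H}=\bigcup_n U_{I_{H_n}}$. If $U_{I_H}$ is compact, the chain stabilises. So the step I would aim for is: \emph{every} open subset of $\Prim(C^*(\Lambda))$ is compact. Equivalently, every ideal $I_H$ is generated by finitely many vertex projections, using the fact that the open set attached to the ideal generated by a single $s_v$ is compact, since $s_v$ is a projection. To get there I would write $U_{I_H}$ as a union of the basic compact open sets and try to show that topological dimension zero, together with the correspondence with $H(\Lambda)$, forces this union to be finite.

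For the descending chain condition I would argue dually using closed sets. A descending chain $I_{H_1}\supseteq I_{H_2}\supseteq\cdots$ gives quotients $C^*(\Gamma(\Lambda\setminus H_n))$, and $\Prim$ of each quotient is a closed subset of $\Prim(C^*(\Lambda))$; these closed sets increase. I would then apply the ascending argument inside the complementary open sets. Here topological dimension zero is used again: the complements are open, hence unions of compact opens, and if every open set is compact (the previous step), the chain of complements stabilises.

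The main obstacle is the compactness step, namely showing that every open set is a \emph{finite} union of basic compact opens. A basis of compact open sets alone does not give this; the example $\Omega$ of the Examples in Section~\ref{sec:pure-inf}, with $\Prim\cong\mathbb Z\cup\{\infty\}$, appears to have infinite strictly monotone chains $[n,\infty)$ in $H(\Omega)$. So I would first check whether that example actually satisfies the hypothesis. If it does, the proof as planned can only go through with an additional finiteness input, for instance finiteness of $H(\Lambda)$ or of $\Prim(C^*(\Lambda))$, which would have to be extracted from, or added to, the hypotheses.
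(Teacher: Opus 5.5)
Your diagnosis is correct, and the check you propose comes out against the statement. $\Omega$ does satisfy the hypothesis, so the theorem is false as stated and no version of your plan (or any other argument) can prove it.

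$\Omega$ satisfies condition (K), so it is strongly aperiodic and $C^*(\Omega)$ has topological dimension zero by \cite{PSS2}. You can also see this by hand. The primitive ideals are $P_n=I_{[n+1,\infty)}$ for $n\in\mathbb Z$, together with $P_\infty=\{0\}$. The open subsets of $\Prim(C^*(\Omega))$ are $\emptyset$, the whole space, and the sets $U_m=\{P_n:n\ge m\}\cup\{P_\infty\}$ attached to $I_{[m,\infty)}$. Any open cover of $U_m$ has a member containing $P_m$, and that member already contains $U_m$. So each $U_m$ is compact, every open set is a union of them, and they form a basis of compact open sets. Yet $\cdots\subsetneq I_{[1,\infty)}\subsetneq I_{[0,\infty)}\subsetneq I_{[-1,\infty)}\subsetneq\cdots$ is a strictly monotone $\mathbb Z$-indexed chain of ideals, so both ACC and DCC fail.

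An even simpler counterexample is the disjoint union of countably many copies of the one-vertex graph with two loops. Its algebra $\bigoplus_{n\in\N}\mathcal O_2$ has discrete primitive ideal space $\N$, hence topological dimension zero, but satisfies neither chain condition.

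The paper's proof is a single appeal to \cite[Theorem~4.4.6]{Ped} for the claim that separable $C^*$-algebras with totally disconnected primitive ideal space satisfy ACC and DCC. That claim is already false for $C(X)$ with $X$ the Cantor set. So the paper's argument has exactly the gap you located, covered by an incorrect citation. In fact, the ideal lattice of a $C^*$-algebra is distributive, so ACC and DCC together force it to have finite length and hence to be finite. The statement therefore amounts to ``topological dimension zero implies finitely many ideals''. This also contradicts the Remark following Proposition~\ref{prop:Oinfty-absorption}. A correct version must assume something like finiteness of $H(\Lambda)$; then both chain conditions follow at once from strong aperiodicity and Theorem~\ref{thm:gauge-ideals}.

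Separately, your descending-chain paragraph would fail even if every open set were compact. A descending chain of ideals corresponds to a \emph{descending} chain of open sets, and compactness of open sets stabilises only ascending chains. DCC on ideals is the ascending chain condition on closed subsets of $\Prim$, which is independent of ACC. For example, take the full subgraph of $\Omega$ on $\{v_n:n\ge 0\}$. Every open subset of its primitive ideal space is compact, so ACC holds, while the ideals $I_{[n,\infty)}$, $n\to\infty$, violate DCC.
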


\begin{proof}
Topological dimension zero implies that $\Prim(C^*(\Lambda))$ has a basis
of compact-open sets. By Pedersen's ideal–lattice theory
\cite[Theorem~4.4.6]{Ped}, separable $C^*$-algebras with totally
disconnected primitive ideal space satisfy both ACC and DCC on ideals.
\end{proof}

\subsection{Stable rank}

\begin{theorem}\label{thm:stable-rank}
Let $A=C^*(\Lambda)$. Suppose:
\begin{enumerate}
\item $RR(A)=0$;
\item every primitive quotient of $A$ is AF.
\end{enumerate}
Then $A$ has stable rank one.
\end{theorem}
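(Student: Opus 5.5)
We would prove Theorem~\ref{thm:stable-rank} by showing that $A=C^*(\Lambda)$ is an extension, possibly infinite-step, of AF pieces, and then invoking permanence of stable rank one. The natural route goes through the classical fact that a $C^*$-algebra of real rank zero whose primitive quotients are all AF is itself AF-like, in the sense that it has stable rank one. The approach splits into a \emph{local} step, that each primitive quotient has stable rank one, and a \emph{gluing} step that passes stable rank one up along the ideal lattice using real rank zero.

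\textbf{Local step.} Every primitive quotient $A/P$ is AF by hypothesis~(2). AF algebras have stable rank one, since finite-dimensional algebras have dense invertibles and stable rank one passes to inductive limits. Hence $\mathrm{sr}(A/P)=1$ for all $P\in\Prim(A)$.

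\textbf{Gluing step.} We use the extension criterion of Brown--Pedersen (and Nagy/Rieffel): if $I\lhd B$ with $\mathrm{sr}(I)=\mathrm{sr}(B/I)=1$, then $\mathrm{sr}(B)=1$ if and only if the index map $\partial\colon K_1(B/I)\to K_0(I)$ vanishes on unitaries lifting appropriately. Under real rank zero this obstruction disappears. Indeed, $\mathrm{RR}(A)=0$ forces $\mathrm{RR}(I)=\mathrm{RR}(A/I)=0$ for every ideal $I$. Moreover, for an extension of real rank zero algebras of stable rank one, \cite[Theorem~3.14]{BP} together with the Lin--R{\o}rdam characterisation gives $\mathrm{sr}(A)=1$ precisely when $K_1(A/I)\to K_0(I)$ vanishes. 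Since each AF-type quotient has $K_1=0$, the obstruction is trivially zero.

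To reach arbitrary ideals rather than single primitive ones, we first show that every quotient $A/I$ is \emph{locally} AF in the sense that its primitive quotients are AF. The strategy is a transfinite composition series argument: choose a maximal chain of ideals and use Zorn's lemma to find the largest ideal $J$ with $\mathrm{sr}(J)=1$. If $J\neq A$, pick a primitive ideal $P\supseteq J$ that is minimal over $J$. Then use real rank zero to produce a nonzero ideal of $A/J$ of stable rank one. Real rank zero gives a nonzero projection $p\in A/J$; then $(A/J)$-hereditary corners $p(A/J)p$ are unital real-rank-zero algebras whose primitive quotients are AF and have $K_1=0$. By Brown--Pedersen, the ideal they generate has stable rank one. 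Applying the extension criterion then contradicts the maximality of $J$. Closure of stable rank one under increasing unions of ideals, which is inductive limit stability, handles limit ordinals.

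\textbf{Main obstacle.} The hardest step is showing that the ideal generated by a projection in $A/J$ has stable rank one using only AF primitive quotients. AF primitive quotients do not by themselves give AF ideals, because nonsimple primitive quotients can hide nontrivial $K_1$ at intermediate levels. We would try first to argue that real rank zero plus all primitive quotients AF implies $K_1(B)=0$ for every subquotient $B$. This would go via the six-term sequence and induction along the composition series, using $K_1(\text{AF})=0$ and surjectivity of $K_0$ maps coming from projection lifting in real rank zero. Vanishing $K_1$ of all subquotients is exactly what kills every index obstruction in \cite[Theorem~3.14]{BP}.
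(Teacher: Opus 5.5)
\textbf{There is a genuine gap, in the gluing step.} Your local step is fine. The stable-rank extension criterion you want is also a real theorem: for an ideal $I$ of $B$ with $\mathrm{sr}(I)=\mathrm{sr}(B/I)=1$, one has $\mathrm{sr}(B)=1$ iff the index map $K_1(B/I)\to K_0(I)$ vanishes. It is not \cite[Theorem~3.14]{BP}, though; that is the real-rank-zero extension theorem about lifting projections, i.e.\ the exponential map $K_0(B/I)\to K_1(I)$.

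Everything hinges on finding, in each nonzero quotient $A/J$, a nonzero ideal $L/J$ with $\mathrm{sr}(L/J)=1$ and zero index map $K_1(L/J)\to K_0(J)$. This is never established.
\begin{itemize}
\item Your remark that ``each AF-type quotient has $K_1=0$'' conflates primitive quotients with the quotients $L/J$ and $A/I$ that actually feed the index map.
\item Your claim that the ideal generated by $p(A/J)p$ has stable rank one ``by Brown--Pedersen'' is circular. $p(A/J)p$ is a unital algebra of real rank zero whose primitive quotients are AF, which is exactly an instance of the theorem you are proving.
\item Hypothesis~(2) alone cannot do this work: $C([0,1]^2)$ has all primitive quotients $\mathbb C$ but stable rank two. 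So real rank zero must enter this step essentially, not merely to supply a nonzero projection.
\end{itemize}

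\textbf{Your proposed remedy does not close the gap in general.} Running six-term sequences along a composition series, using $K_1(\mathrm{AF})=0$, presupposes that the composition factors are AF. Hypothesis~(2) only speaks about primitive quotients. The bridge exists only for simple subquotients. If $I\subsetneq J$ with $J/I$ simple, choose a primitive $P\supseteq I$ with $J\not\subseteq P$. Simplicity forces $J\cap P=I$, so $J/I\cong(J+P)/P$ is an ideal of the AF algebra $A/P$, hence AF. When the ideal lattice is finite, maximal chains have simple gaps and your plan goes through; L.~G.~Brown's theorem on extensions of AF algebras even makes $A$ AF, without using $RR(A)=0$. But the theorem imposes no finiteness. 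In general there need be no composition series with simple gaps: $C(X)$, for $X$ a Cantor set, has real rank zero, AF primitive quotients, and no minimal nonzero ideal. So the key lemma is missing. One version would be that every nonzero quotient of $A$ contains a nonzero AF ideal; another would be that all quotients have vanishing $K_1$.

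\textbf{Comparison with the paper.} The paper's proof avoids $K$-theory entirely. It asserts that $A$ is extremely rich of real rank zero with no purely infinite primitive quotient, and then appeals to the Brown--Pedersen dichotomy \cite[Corollary~2.9]{BP3}. Be aware that the paper elsewhere records that dichotomy only for simple algebras. Also, extremal richness of $A$ does not follow automatically from that of its primitive quotients. So that route does not supply your missing step either.
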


\begin{proof}
By (2) every primitive quotient of $A$ is AF, hence extremely rich
\cite{BP} and of stable rank one; in particular no primitive quotient of
$A$ is purely infinite. Thus $A$ is extremely rich of real rank zero with
no purely infinite primitive quotient, and so falls on the stable-rank-one
branch of the Brown--Pedersen dichotomy \cite[Corollary~2.9]{BP3}. Hence
$\operatorname{sr}(A)=1$.
\end{proof}

\begin{remark}
The hypothesis that every primitive quotient be AF cannot be relaxed to
allow purely infinite quotients. A purely infinite simple $C^*$-algebra is
extremely rich \cite{Pe0} and of real rank zero, but has stable rank
infinity; by the dichotomy \cite[Corollary~2.9]{BP3} it lies on the purely
infinite branch, not the stable-rank-one branch. Consequently the
mixed algebras of Theorem~\ref{thm:mixed-extreme}, whose maximal-tail
hypothesis forces purely infinite quotients, are \emph{not} of stable rank
one.
\end{remark}

\subsection{Mixed AF/purely infinite behaviour}

\begin{theorem}\label{thm:mixed-extreme}
Let $\Lambda$ be strongly aperiodic, row-finite, locally convex, and
assume:
\begin{enumerate}
\item every vertex in each maximal tail is connected to by a generalised
      cycle;
\item the lattice $H(\Lambda)$ of saturated hereditary subsets is finite;
\item for each saturated hereditary $H\subset\Lambda^0$, the quotient
      $C^*(\Gamma(\Lambda\setminus H))$ is either AF or purely infinite;
\item $C^*(\Lambda)$ is $K_0$-liftable (Theorem~\ref{thm:RR0-finite-H}(2)).
\end{enumerate}
Then $C^*(\Lambda)$ is purely infinite, has real rank zero, and is extremely
rich. It is not stably finite, and hence does not have stable rank one.
\end{theorem}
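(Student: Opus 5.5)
The plan is to assemble the statement from results already proved, with one extra argument for the stable-finiteness claim.

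\emph{Pure infiniteness.} Strong aperiodicity together with hypothesis~(1) is condition~(\ref{pi:e}) of Theorem~\ref{thm:pure-inf}. Since (\ref{pi:e})$\Leftrightarrow$(\ref{pi:d})$\Rightarrow$(\ref{pi:a})$\Rightarrow$(\ref{pi:b}) holds with no finiteness assumption on $\Lambda^0$, $C^*(\Lambda)$ is purely infinite in the sense of Kirchberg--R\o rdam.

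\emph{Real rank zero.} Hypotheses (1) and (2), together with strong aperiodicity, are exactly the hypotheses of Theorem~\ref{thm:RR0-finite-H}. That theorem gives topological dimension zero and the equivalence of real rank zero with $K_0$-liftability. Hypothesis~(4) then yields $RR(C^*(\Lambda))=0$.

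\emph{Extremal richness.} Apply Theorem~\ref{thm:extreme-rich-structure}. Its condition~(1), topological dimension zero, was obtained in the previous step. Its condition~(2), that every ideal is gauge invariant, follows from strong aperiodicity via Theorem~\ref{thm:gauge-invariant}. Its condition~(3) is hypothesis~(3). Equivalently, one may simply quote Theorem~\ref{thm:RR0-extreme}, whose hypotheses coincide with ours.

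\emph{Failure of stable finiteness.} This is the only step needing a new argument, and I expect it to be the main obstacle, since it needs $\Lambda^0\neq\emptyset$.
\begin{enumerate}
\item Pick any vertex $v$. Since $\Lambda^0$ is a maximal tail, hypothesis~(1) applies to it.
\item By (\ref{pi:b})$\Rightarrow$(\ref{pi:c}) of Theorem~\ref{thm:pure-inf}, the projection $s_v$ is properly infinite in $C^*(\Lambda)$.
\item Proper infiniteness gives $s_v\oplus s_v\precsim s_v$. For projections this yields orthogonal subprojections $p,q\le s_v$ with $p\sim q\sim s_v$.
\item Hence $s_v\sim p\lneq s_v$, because $q\neq0$. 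So $s_v$ is an infinite projection in $C^*(\Lambda)$, and hence in every matrix amplification.
\item Therefore $C^*(\Lambda)$ is not stably finite.
\end{enumerate}
Two points need care. First, $s_v\neq0$: this holds since vertex projections are nonzero in $C^*(\Lambda)$ (Theorem~\ref{thm:gauge-ideals}(2) with $H=\emptyset$ gives $H_{I_\emptyset}=\emptyset$, so no $s_v$ lies in the zero ideal). Second, the Cuntz subequivalence for projections must be upgraded to Murray--von Neumann subequivalence. This is standard: if $x_n^*(s_v\oplus s_v)x_n\to s_v$, then functional calculus gives a partial isometry.

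\emph{Stable rank.} Finally, stable rank one implies stable finiteness, since $sr(A)=1$ passes to matrix algebras and forces every isometry there to be unitary. So the failure of stable finiteness rules out stable rank one. This is consistent with the dichotomy \cite[Corollary~2.9]{BP3}, as recorded in the preceding remark.
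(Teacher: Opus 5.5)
Your proposal is correct and follows the paper's proof step for step: pure infiniteness from Theorem~\ref{thm:pure-inf}, real rank zero from Theorem~\ref{thm:RR0-finite-H} plus hypothesis~(4), extremal richness from Theorem~\ref{thm:extreme-rich-structure}, and failure of stable finiteness via an infinite projection. Your use of the nonzero vertex projection $s_v$ and (\ref{pi:b})$\Rightarrow$(\ref{pi:c}) is more careful than the paper's bare assertion that a purely infinite algebra contains an infinite projection, which needs some nonzero projection to exist. Your step~1 is unnecessary, and its claim that $\Lambda^0$ is a maximal tail is false in general ((MT1) fails for a disconnected $\Lambda$); nothing depends on it, since (\ref{pi:b})$\Rightarrow$(\ref{pi:c}) already applies to every vertex.
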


\begin{proof}
By hypothesis (1) and Theorem~\ref{thm:pure-inf}, $C^*(\Lambda)$ is purely
infinite; in particular every quotient in hypothesis (3) is purely infinite.
Real rank zero then follows from Theorem~\ref{thm:RR0-finite-H} together with
hypothesis (4), and extremal richness from
Theorem~\ref{thm:extreme-rich-structure}. A purely infinite $C^*$-algebra
contains an infinite projection, so it is not stably finite and therefore not
of stable rank one.
\end{proof}

\begin{remark}
The hypotheses above are satisfied for many natural classes of
higher-rank graphs, including those with aperiodic quartets at every
vertex, finite vertex set, cofinality, and $K_0$-liftable
$C^*$-algebra. Note that the mixed example
$\Lambda_{\mathrm{mix}}$ of Example~\ref{ex:mixed-kgraph} does \emph{not}
satisfy hypothesis (1): the vertex $w$ is not connected to by any
generalised cycle, and correspondingly $C^*(\Lambda_{\mathrm{mix}})$ is not
purely infinite. Its real rank zero is established separately in
Example~\ref{ex:mixed-kgraph}.
\end{remark}

\begin{examples}\label{ex:5.5}
We conclude this section with three illustrative $k$-graph examples
demonstrating the AF, purely infinite, and mixed behaviours that arise
under the hypotheses of Theorems~\ref{thm:RR0-finite-H},
\ref{thm:extreme-rich-structure}, and \ref{thm:RR0-extreme}.

\smallskip

\noindent\textbf{(1) An AF $k$-graph.}
Let $\Lambda_{\mathrm{AF}}$ be the product-of-chains $k$-graph described in
Example~\ref{ex:AF-kgraph}. Since $\Lambda_{\mathrm{AF}}$ has no cycles or
generalised cycles, $C^*(\Lambda_{\mathrm{AF}})$ is AF. In particular,
$RR(C^*(\Lambda_{\mathrm{AF}}))=0$, $K_1(C^*(\Lambda_{\mathrm{AF}}))=0$, and
$C^*(\Lambda_{\mathrm{AF}})$ is extremely rich.

\smallskip

\noindent\textbf{(2) A purely infinite rank-$2$ graph.}
Let $\Lambda_{\mathrm{PI}}$ be the one-vertex rank-$2$ graph of
Example~\ref{ex:PI-kgraph}. As computed there, $\alpha_1=\alpha_2=(1\,2)$
fixes $B=\{1,2\}$, so $\Lambda_{\mathrm{PI}}$ is aperiodic by
\cite[Corollary~4.1]{DY} and, being cofinal, strongly aperiodic. Every
vertex is connected to by a generalised cycle with an entrance. By Theorem~\ref{thm:pure-inf},
$C^*(\Lambda_{\mathrm{PI}})$ is purely infinite. Since $H(\Lambda_{\mathrm{PI}})$
is finite, Theorem~\ref{thm:RR0-finite-H} gives $RR(C^*(\Lambda_{\mathrm{PI}}))=0$,
and hence $C^*(\Lambda_{\mathrm{PI}})$ is extremely rich.

\smallskip

\noindent\textbf{(3) A mixed example: AF ideal and purely infinite quotient.}
Let $\Lambda_{\mathrm{mix}}$ be the $2$-graph obtained by adjoining a new
vertex $w$ together with a degree-$\varepsilon_1$ edge $\alpha:w\to v$ and a
degree-$\varepsilon_2$ edge $\beta:w\to v$ to $\Lambda_{\mathrm{PI}}$
(with $g_i\alpha=f_i\beta$), as in Example~\ref{ex:mixed-kgraph}. The
hereditary subset $H=\{w\}$ generates an AF ideal
$I_H\cong K(\ell^2(\Lambda_{\mathrm{mix}}w))$, while the quotient
$C^*(\Lambda_{\mathrm{mix}})/I_H\cong C^*(\Lambda_{\mathrm{PI}})$ is purely
infinite. The lattice $H(\Lambda_{\mathrm{mix}})$ is finite, and
$\Lambda_{\mathrm{mix}}$ is strongly aperiodic with generalised cycles at $v$.
The vertex $w$ is not connected to by a generalised cycle, so
$\Lambda_{\mathrm{mix}}$ does not satisfy the hypotheses of
Theorem~\ref{thm:RR0-finite-H}; as in Example~\ref{ex:mixed-kgraph}, real rank
zero instead follows from the Brown--Pedersen extension theorem
\cite[Theorem~3.14]{BP}, since the AF ideal $I_H$ satisfies
$\mathrm{RR}(I_H)=0$ and $K_1(I_H)=0$ and the quotient
$C^*(\Lambda_{\mathrm{PI}})$ has real rank zero. Then
Theorem~\ref{thm:extreme-rich-structure} implies that
$C^*(\Lambda_{\mathrm{mix}})$ is extremely rich. Note that
$C^*(\Lambda_{\mathrm{mix}})$ does \emph{not} have stable rank one: its
quotient $C^*(\Lambda_{\mathrm{PI}})$ is purely infinite, so
$C^*(\Lambda_{\mathrm{mix}})$ is not stably finite and lies on the purely
infinite branch of the dichotomy \cite[Corollary~2.9]{BP3}.
\end{examples}

\section{Trichotomy, $\mathcal{Z}$-stability}\label{sec:trichotomy}

By \cite[Theorem~2.3]{PRA2}, if $A$ is a Noetherian, purely infinite, separable
$C^*$-algebra, then $A$ is generated (as a closed ideal) by a single projection.
Since every nonzero projection in a purely infinite algebra is properly infinite,
such an algebra cannot be stably finite. We record the consequence as follows.

\begin{theorem}[Trichotomy]\label{thm:trichotomy}
There is no Noetherian, separable $C^*$-algebra that is both purely infinite
and stably finite. In particular, there is no Noetherian purely infinite
$AH_0$-algebra.
\end{theorem}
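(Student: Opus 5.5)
We argue by contradiction, using the remarks immediately preceding the statement; throughout, algebras are assumed nonzero. Let $A$ be a nonzero Noetherian, separable, purely infinite $C^*$-algebra. By \cite[Theorem~2.3]{PRA2}, $A$ is generated as a closed two-sided ideal by a single projection $p$. Since $A\neq 0$, we have $p\neq 0$. The aim is then to show that $p$ is an infinite projection. This rules out stable finiteness, because a stably finite algebra has all projections in $M_n(\widetilde A)$ finite, and in particular all projections of $A$ itself (the case $n=1$).

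The key step is that $p$ is properly infinite. By \cite[Proposition~4.16]{KR}, every nonzero positive element of a purely infinite algebra is properly infinite. This is the same fact used for (\ref{pi:b})$\Rightarrow$(\ref{pi:c}) in Theorem~\ref{thm:pure-inf}. Hence $p\oplus p\precsim p$. For projections, Cuntz subequivalence agrees with Murray--von Neumann subequivalence. So there are mutually orthogonal subprojections $q_1,q_2\le p$ with $q_1\sim p\sim q_2$. Then $p\sim q_1 \le p-q_2 < p$, so $p$ is equivalent to a proper subprojection of itself, i.e.\ $p$ is infinite. This contradicts stable finiteness, and proves the first sentence.

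For the second sentence, it suffices to check that every $AH_0$-algebra is stably finite. Each building block $M_n(C(X))$ has a faithful family of finite traces, namely evaluation at points composed with the matrix trace. Hence matrices over it, and over its unitisation, have only finite projections. Stable finiteness then passes to inductive limits. Indeed, if a projection $e\in M_m(\widetilde{A})$ in the limit were infinite, we would have $e\sim e'<e$. By functional calculus, $e$, $e'$ and the implementing partial isometry can be approximated by elements of a finite stage. This yields an infinite projection at that finite stage, which is impossible. Consequently an $AH_0$-algebra cannot be purely infinite and Noetherian at the same time. The separability needed to apply the first part is automatic for $AH_0$-algebras.

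The main obstacle is the input from \cite{PRA2}: without the Noetherian hypothesis there may be no nonzero projection at all. For example, $\mathcal O_2\otimes C_0((0,1))$ is purely infinite but projectionless, so the argument needs a projection to witness infiniteness. Everything else is formal. The only point to watch is the translation of proper infiniteness from the Cuntz-comparison form $p\oplus p\precsim p$ to Murray--von Neumann equivalence for projections, which is standard \cite{KR}.
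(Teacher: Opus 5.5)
Your proof is correct and is essentially the paper's argument: \cite[Theorem~2.3]{PRA2} supplies a nonzero projection $p$ generating $A$ as an ideal, and pure infiniteness makes $p$ properly infinite, hence infinite, which contradicts stable finiteness. Beyond this you add an explicit check that $AH_0$-algebras are separable and stably finite, which the paper leaves implicit in ``in particular''; in your inductive-limit step, pass to a later stage so that the approximate relations hold in a building block itself, since the connecting maps need not be injective.
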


Thus every Noetherian separable $C^*$-algebra belongs to exactly one of the
following three classes:

\[
\text{(i) purely infinite},\qquad
\text{(ii) neither purely infinite nor stably finite},\qquad
\text{(iii) stably finite}.
\]

\begin{remark}
There exist $C^*$-algebras that are both purely infinite and stably finite
(e.g.\ the minimal unitisation of $\mathcal O_2\otimes\mathcal K$), but none of
these are Noetherian.
\end{remark}

We now show that higher-rank graph $C^*$-algebras occur in each branch of the
trichotomy.

\subsection{Purely infinite, not stably finite}

\begin{itemize}
\item
\textbf{Non-simple case.}
Let $\Lambda$ be a finite, row-finite, locally convex, strongly aperiodic
$k$-graph in which every vertex in each maximal tail is connected to by a
generalised cycle with an entrance. Then $C^*(\Lambda)$ is purely infinite by
Theorem~\ref{thm:pure-inf}, and not stably finite since it contains infinite
projections \cite[Proposition~2.7]{PSS4}.

\item
\textbf{Simple case.}
Purely infinite simple higher-rank graph algebras appear in
\cite[\S7.3]{CaHS}.

\item
\textbf{Twisted case.}
By \cite[Proposition~5.7]{KPS5}, if $\Lambda$ is row-finite with no sources,
aperiodic, and every vertex can be reached from a generalised cycle with an
entrance, then for every $2$-cocycle $c\in Z^2(\Lambda,\mathbb T)$, each
hereditary subalgebra of $C^*(\Lambda,c)$ contains an infinite projection.
If $\Lambda$ is cofinal, then $C^*(\Lambda,c)$ is simple and purely infinite.

\item
\textbf{Amplified directed graphs.}
Let $\Lambda$ be an amplified directed graph (i.e.\ $v\Lambda w$ is either
empty or infinite for all $v,w$). Then every vertex set is saturated and
$\Lambda$ satisfies condition (K), hence is strongly aperiodic
\cite[Lemma~3.2]{KaP}. If $\Lambda^1\neq\emptyset$, then $\Lambda$ is strongly
connected and $C^*(\Lambda)$ is purely infinite and simple
\cite[\S6.1]{ERS}. If $\Lambda^0$ is finite, then $\Prim(C^*(\Lambda))$ is
finite, so $C^*(\Lambda)$ is Noetherian (and Artinian).
\end{itemize}

\subsection{Neither purely infinite nor stably finite}

\begin{itemize}
\item Consider the diagram of the directed graph $\Sigma_2$ shown below. The vertex $w$ carries two loops, so
$p_w$ is infinite; the sink $v$ lies on no cycle, and $\{v\}$ is a maximal tail
that is not connected to by a cycle with an entrance.
\[
\begin{tikzpicture}[scale=1.1,>=stealth]
  \node (w) at (0,0) [circle,fill=black,inner sep=1.5pt,label=below:{$w$}] {};
  \node (v) at (3,0) [circle,fill=black,inner sep=1.5pt,label=below:{$v$}] {};
  \draw[->,thick] (w) .. controls (-1.3,1.3) and (-1.9,0.3) .. (w)
    node[midway,above left] {$e_1$};
  \draw[->,thick] (w) .. controls (-1.9,-0.3) and (-1.3,-1.3) .. (w)
    node[midway,below left] {$e_2$};
  \draw[->,thick] (w) -- (v) node[midway,above] {$a$};
\end{tikzpicture}
\]

\item
Let $\Sigma_2$ be the directed graph shown above. The
directed graph underlying $\Sigma_2$ satisfies condition (K), hence
$\Sigma_2$ is strongly aperiodic. The set $\{v\}\subset\Sigma_2^0$ is a
maximal tail, but it is not connected to by a cycle with an entrance in that
tail. By Theorem~\ref{thm:pure-inf}, $C^*(\Sigma_2)$ is not purely infinite.
Moreover, $p_w$ is an infinite projection \cite[Proposition~2.7]{PSS4}, so
$C^*(\Sigma_2)$ is not stably finite. Since $\Sigma_2^0$ is finite,
$C^*(\Sigma_2)$ is Noetherian and Artinian.
\end{itemize}

\subsection{Stably finite, not purely infinite}

\begin{itemize}
\item
All AF algebras with finitely many ideals (up to Morita equivalence)
\cite{KPR,T,ES}.

\item
The $A\mathbb T$-algebras associated to rank-$2$ Bratteli diagrams
\cite{PRRS}, including the irrational rotation algebra and Bunce--Deddens
algebras \cite[Theorem~5.1, \S7.1]{CaHS}.

\item
If $\Lambda$ is a finite, row-finite, locally convex $2$-graph with no cycle
with an entrance and cofinal, then $C^*(\Lambda)$ is a unital $A\mathbb T$-algebra
\cite[Theorem~2.5]{PSS4}.

\item
AF-embeddable higher-rank graph algebras as in \cite[\S7.2--7.3]{CaHS}.

\item
Finite amplified $1$-graphs with no loops (hence no cycles) are stably finite
\cite{ERS,KPR,ES}.
\end{itemize}

\begin{theorem}\label{thm:D0-stable}
Let $A$ be a $C^*$\nobreakdash-algebra which is both Noetherian and Artinian. 
Suppose that $A$ is separable, nuclear and purely infinite, and let $D_0$ be either 
$\mathcal{O}_2$ or the Jiang--Su algebra $\mathcal{Z}$. Then $A$ is $D_0$-stable, 
that is, $A \cong A \otimes D_0$. Moreover, for every nonzero $C^*$\nobreakdash-algebra $B$, 
the tensor product $A \otimes B$ is purely infinite.
\end{theorem}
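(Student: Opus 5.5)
The plan is to reduce everything to the Kirchberg--R\o rdam machinery for separable nuclear purely infinite algebras, using the chain conditions only to control $\Prim(A)$. First I would show that a $C^*$-algebra which is both Noetherian and Artinian has only finitely many ideals. The lattice of closed ideals of $A$ is isomorphic to the lattice of open subsets of $\Prim(A)$. Under ACC and DCC every ideal has a composition series of finite length, and $\Prim(A)$ should then be a finite $T_0$-space. In a finite $T_0$-space every open set is compact, so $A$ has topological dimension zero; this is the same argument as in the proof of Theorem~\ref{thm:RR0-finite-H}.

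Second, I would apply the Kirchberg--R\o rdam theorem that a separable purely infinite $C^*$-algebra of topological dimension zero is strongly purely infinite. Since $A$ is also nuclear, Kirchberg--R\o rdam's absorption theorem \cite{KR} then gives $A\cong A\otimes\mathcal O_\infty$. The $\mathcal Z$ case of the statement follows at once from the standard isomorphism $\mathcal O_\infty\otimes\mathcal Z\cong\mathcal O_\infty$:
\[
A\otimes\mathcal Z\cong A\otimes\mathcal O_\infty\otimes\mathcal Z\cong A\otimes\mathcal O_\infty\cong A .
\]

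For the final assertion, let $B$ be a nonzero $C^*$-algebra. Then $A\otimes B\cong(A\otimes B)\otimes\mathcal O_\infty$, where the minimal tensor product is used and nuclearity of $A$ removes any ambiguity. For any $C^*$-algebra $C$, Kirchberg--R\o rdam show that $C\otimes\mathcal O_\infty$ is strongly purely infinite, and in particular purely infinite. Applying this with $C=A\otimes B$ shows that $A\otimes B$ is purely infinite.

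The $D_0=\mathcal O_2$ case is the main obstacle. Kirchberg's $\mathcal O_2$-absorption theorem characterises $A\cong A\otimes\mathcal O_2$, for separable nuclear $\mathcal O_\infty$-stable $A$, by the vanishing of the ideal-related $KK$-class of $\mathrm{id}_A$. Because $\Prim(A)$ is finite, this reduces to a finite check on the subquotients $I/J$ of $A$ with $J\subset I$. Given that, I would invoke Kirchberg's classification of $\mathcal O_\infty$-stable algebras over the finite space $\Prim(A)$ to conclude $A\cong A\otimes\mathcal O_2$.

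I do not see how pure infiniteness together with the chain conditions alone forces the required $K$-theory to vanish. Indeed $\mathcal O_\infty$ itself is simple, hence Noetherian and Artinian, separable, nuclear and purely infinite, yet $K_0(\mathcal O_\infty)=\mathbb Z$, so $\mathcal O_\infty$ is not $\mathcal O_2$-stable. So the $\mathcal O_2$ case cannot follow from the stated hypotheses. My first attempt would therefore be to check exactly which additional $K$-theoretic hypothesis the intended argument uses, namely $KK(\mathcal I,\mathcal I)$-contractibility, equivalently vanishing $K$-theory of all subquotients, and to prove the $\mathcal O_2$ clause under that hypothesis. The $\mathcal Z$ and tensor-product parts go through unconditionally as above.
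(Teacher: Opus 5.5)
Your counterexample is correct, and it is the decisive point. $\mathcal O_\infty$ is simple, hence trivially Noetherian and Artinian. It is also separable, nuclear and purely infinite, yet $K_0(\mathcal O_\infty)=\mathbb Z$ whereas $K_0(\mathcal O_\infty\otimes\mathcal O_2)=0$. A counterexample exists even within the paper's own class: the one-vertex $1$-graph with three loops gives $\mathcal O_3$, with $K_0=\mathbb Z/2$. So the statement is false for $D_0=\mathcal O_2$. The paper's proof settles both choices of $D_0$ with a single appeal to \cite[Theorem~2.4]{PRA1}, and so it cannot be correct as written. Either that result carries a $K$-theoretic hypothesis of the kind you identify, or the $\mathcal O_2$ clause should be deleted. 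Nothing later depends on it, since Corollary~\ref{cor:Z-stable} uses only $D_0=\mathcal Z$.

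For the parts that are true, your argument is sound and differs from the paper's, which consists only of citations. You pass through topological dimension zero, strong pure infiniteness and $\mathcal O_\infty$-absorption. You then use $\mathcal O_\infty\otimes\mathcal Z\cong\mathcal O_\infty$ and $A\otimes B\cong(A\otimes B)\otimes\mathcal O_\infty$. This is exactly the mechanism of the paper's later Proposition~\ref{prop:Oinfty-absorption}, and it gives stronger conclusions: $\mathcal O_\infty$-stability, and strong pure infiniteness of $A\otimes B$. For the tensor clause the paper cites \cite[Proposition~4.7]{KR}, the same result it used for \textup{(\ref{pi:a})}$\Rightarrow$\textup{(\ref{pi:b})} of Theorem~\ref{thm:pure-inf}; your absorption argument is what actually explains that clause.

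Two small corrections. First, a modular lattice of finite length can be infinite (infinitely many atoms below a common top). A finite composition series therefore gives finiteness of $\Prim(A)$ only once you invoke distributivity of the ideal lattice. You do not need finiteness at all, however. ACC alone makes every open subset of $\Prim(A)$ compact: an open cover with no finite subcover produces a strictly increasing sequence of finite unions. So the whole topology is already a basis of compact-open sets, and the Artinian hypothesis is superfluous for the $\mathcal Z$ and tensor parts. Second, consider the ``equivalently'' in your last paragraph, between vanishing $K$-theory of all subquotients and ideal-related $KK$-contractibility. That equivalence is only available when $A$ lies in the bootstrap class over $\Prim(A)$. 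This holds for the $k$-graph algebras considered here, but it is not automatic for a general separable nuclear $A$.
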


\begin{proof}
The $D_0$-stability follows from \cite[Theorem~2.4]{PRA1}. 
The pure infiniteness of $A \otimes B$ for nonzero $B$ follows from 
\cite[Proposition~4.7]{KR}.
\end{proof}

\begin{proposition}\label{prop:Oinfty-absorption}
Let $\Lambda$ be a locally convex, row-finite $k$-graph with no sources
satisfying condition~\textup{(\ref{pi:e})} of Theorem~\ref{thm:pure-inf}, so that
$C^*(\Lambda)$ is purely infinite in the sense of
Definition~\ref{def:pure-inf}. Suppose in addition that $C^*(\Lambda)$ has
topological dimension zero. Then $C^*(\Lambda)$ is strongly purely infinite,
\[
C^*(\Lambda) \;\cong\; C^*(\Lambda)\otimes\mathcal O_\infty,
\]
and $C^*(\Lambda)\otimes B$ is purely infinite for every nonzero
$C^*$-algebra $B$. In particular, the hypotheses hold whenever the lattice
$H(\Lambda)$ of saturated hereditary subsets is finite. Moreover
$C^*(\Lambda)$ has nuclear dimension one, even when $C^*(\Lambda)$ is not
simple (cf.\ \cite{RSS}).
\end{proposition}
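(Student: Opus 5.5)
The plan is to feed the graph-level hypotheses into the known structure theory of purely infinite $C^*$-algebras. First, condition~(\ref{pi:e}) of Theorem~\ref{thm:pure-inf} gives (\ref{pi:e})$\Rightarrow$(\ref{pi:a})$\Rightarrow$(\ref{pi:b}), so $C^*(\Lambda)$ is purely infinite in the sense of Definition~\ref{def:pure-inf}. By Remark~\ref{rem:dense-span} it is separable, and it is nuclear because $k$-graph algebras of row-finite locally convex $k$-graphs are nuclear \cite{KP2,S}; the paper already uses this in the proof of Theorem~\ref{thm:RR0-finite-H}.

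Next, for strong pure infiniteness I would cite Pasnicu--R{\o}rdam: a separable purely infinite $C^*$-algebra whose primitive ideal space has a basis of compact open sets is strongly purely infinite \cite{PR}. If the precise citation there requires real rank zero, I would instead use Kirchberg's result that separable nuclear purely infinite algebras with finitely many ideals are strongly purely infinite, or the Kirchberg--R{\o}rdam criterion \cite{KR}. In that fallback, topological dimension zero is used to approximate by compact-open (hence finitely generated) ideals. Then Kirchberg--R{\o}rdam \cite{KR} gives $\mathcal O_\infty$-absorption: a separable, nuclear, strongly purely infinite $C^*$-algebra satisfies $A\cong A\otimes\mathcal O_\infty$. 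For the tensor statement, pure infiniteness of $C^*(\Lambda)\otimes B$ for nonzero $B$ follows as in Theorem~\ref{thm:D0-stable}. Alternatively, $C^*(\Lambda)\otimes B\cong (C^*(\Lambda)\otimes B)\otimes \mathcal O_\infty$, and $\mathcal O_\infty$-absorbing algebras are strongly purely infinite by \cite{KR}.

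For the ``in particular'' clause, assume $H(\Lambda)$ is finite. Strong aperiodicity and Theorem~\ref{thm:gauge-invariant} make every ideal gauge invariant, so by Theorem~\ref{thm:gauge-ideals} the ideals are exactly the $I_H$ with $H\in H(\Lambda)$. Hence $\Prim(C^*(\Lambda))$ is a finite $T_0$ space, and every open set in it is compact. This is exactly the argument in Theorem~\ref{thm:RR0-finite-H}. Finally, nuclear dimension: $C^*(\Lambda)$ is separable, nuclear and $\mathcal O_\infty$-stable, so Bosa--Gabe--Sims--White \cite{BGSW} gives $\dim_{\mathrm{nuc}}\le 1$. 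Nuclear dimension zero would force $C^*(\Lambda)$ to be AF, hence stably finite. That is impossible, because $C^*(\Lambda)$ is nonzero and purely infinite, and by Theorem~\ref{thm:pure-inf}(\ref{pi:c}) each $s_v$ is a properly infinite projection. So the nuclear dimension is exactly one. No simplicity is used anywhere.

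The main obstacle is the step from pure infiniteness plus topological dimension zero to \emph{strong} pure infiniteness. That is the only non-formal input, and I must make sure the cited result, in \cite{PR} or Kirchberg's work, is stated without extra hypotheses such as real rank zero, which we do not have (cf.\ Remark~\ref{rem:PSS2-6.1}). My first attempt would be Pasnicu--R{\o}rdam's theorem that purely infinite algebras with $\Prim$ having a basis of compact open sets are strongly purely infinite. The finite-lattice case is the safe special case via Kirchberg's finite-ideal result.
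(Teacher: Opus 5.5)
Your proposal is correct and follows the paper's proof essentially step for step. The shared steps are separability and nuclearity; pure infiniteness plus topological dimension zero giving strong pure infiniteness, which the paper also asserts as a single external fact without a specific reference, so you are right to single it out; Kirchberg--R\o rdam $\mathcal O_\infty$-absorption and the tensor-product permanence; the finite $T_0$-space argument when $H(\Lambda)$ is finite; and \cite{BGSW} for the nuclear dimension. Your explicit argument that $\dim_{\mathrm{nuc}} C^*(\Lambda)\neq 0$ (a separable algebra of nuclear dimension zero is AF, while each $s_v$ is properly infinite) is a small addition that the paper leaves implicit.
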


\begin{proof}
\emph{Separability and nuclearity.} By Remark~\ref{rem:dense-span},
$C^*(\Lambda)$ is separable; it is nuclear as the $C^*$-algebra of the
amenable étale path groupoid $\mathcal G_\Lambda$ \cite{KP2,S}, and lies in
the bootstrap class.

\emph{Ideal structure and topological dimension zero.} By
Theorem~\ref{thm:gauge-invariant}, strong aperiodicity makes every ideal of
$C^*(\Lambda)$ gauge invariant, so $H\mapsto I_H$ is a lattice isomorphism of
$H(\Lambda)$ onto the ideal lattice of $C^*(\Lambda)$, with quotients
$C^*(\Gamma(\Lambda\setminus H))$ (Theorem~\ref{thm:gauge-ideals}). Hence
$\Prim(C^*(\Lambda))$ is the space of maximal tails, topologised by the
containment order on $H(\Lambda)$. When $H(\Lambda)$ is finite this is a
finite $T_0$-space, in which every open set is compact; its topology is then a
basis of compact-open sets, so $C^*(\Lambda)$ has topological dimension zero.

\emph{Pure infiniteness implies strong pure infiniteness.} A separable,
nuclear $C^*$-algebra of topological dimension zero is purely infinite if and
only if it is strongly purely infinite: the two notions coincide once
$\Prim$ has a basis of compact-open sets.
Since $C^*(\Lambda)$ is purely infinite by hypothesis, it is therefore
strongly purely infinite.

\emph{Absorption and permanence.} By the Kirchberg--R\o rdam
$\mathcal O_\infty$-absorption theorem \cite{KR}, a separable, nuclear,
strongly purely infinite $C^*$-algebra $D$ satisfies
$D\cong D\otimes\mathcal O_\infty$; applying this to $D=C^*(\Lambda)$ gives
$C^*(\Lambda)\cong C^*(\Lambda)\otimes\mathcal O_\infty$. Finally, for any
nonzero $B$,
\[
C^*(\Lambda)\otimes B \;\cong\; C^*(\Lambda)\otimes\mathcal O_\infty\otimes B
\]
is purely infinite by \cite[Proposition~4.7]{KR}, the permanence property
already used in the proof of Theorem~\ref{thm:D0-stable}. For the last part,
that $C^*(\Lambda)$ has nuclear dimension one, recall that $C^*(\Lambda)$ is
$\mathcal O_\infty$-stable and apply \cite[Theorem~A]{BGSW}; that strongly
purely infinite $C^*$-algebras have finite nuclear dimension is due to Szab\'o
\cite{Sz}.
\end{proof}

\begin{remark}
Proposition~\ref{prop:Oinfty-absorption} subsumes the purely infinite branch of
Corollary~\ref{cor:Z-stable}: $\mathcal O_\infty$-stability follows from pure
infiniteness together with topological dimension zero alone, with no
Noetherian or Artinian hypothesis and without passing through
Theorem~\ref{thm:D0-stable}. Since topological dimension zero is strictly
weaker than finiteness of $H(\Lambda)$---it holds whenever the maximal-tail
space is totally disconnected---the conclusion covers many $k$-graphs with
infinite ideal lattice.
\end{remark}

\begin{remark}\label{rem:mixed-nucdim}
Proposition~\ref{prop:Oinfty-absorption} computes the nuclear dimension only in
the purely infinite case. For the mixed algebras of
Example~\ref{ex:mixed-kgraph}, which have an AF ideal $I$ and a purely infinite
quotient of topological dimension zero, the Winter--Zacharias extension estimate
\cite{WZ} gives only
\[
\dim_{\mathrm{nuc}} C^*(\Lambda)
  \;\le\; \dim_{\mathrm{nuc}} I + \dim_{\mathrm{nuc}}\big(C^*(\Lambda)/I\big) + 1
  \;=\; 0 + 1 + 1 \;=\; 2 .
\]
In the opposite direction---a purely infinite ideal with AF quotient---Ruiz,
Sims and Tomforde \cite[Theorem~6.1]{RST} obtain the sharp value one, using that
such graph extensions are quasidiagonal. An extension with purely infinite
quotient is not quasidiagonal, so the quasidiagonality argument does not apply
in the present AF-ideal direction. For $k=1$, however, Faurot and Schafhauser
\cite[Theorem~B]{FS} nonetheless obtain the sharp value one for this
AF-ideal, purely-infinite-quotient shape, via Evington's full-extension theorem
\cite{Ev} in place of quasidiagonality: when the AF ideal $I_H$ is stable and
the extension is full---which for a finite $1$-graph amounts to every source
connecting to every cycle---the bound drops from two to one. The higher-rank
analogue requires a combinatorial guarantee of stability of $I_H$ and fullness
of the extension; the mixed graph $\Lambda_{\mathrm{mix}}$ of
Example~\ref{ex:mixed-kgraph}, whose vertex $w$ is reached by no generalised
cycle, is exactly the case where fullness fails, and whether the sharp value one
holds for higher-rank graphs we leave open.
\end{remark}

\begin{corollary}\label{cor:Z-stable}
Let $\Lambda$ be a row-finite $k$-graph with no sources. Suppose that:
\begin{itemize}
  \item $\Lambda$ is strongly aperiodic;
  \item every vertex in each maximal tail of $\Lambda$ is connected to by a generalised cycle; and
  \item every ascending and every descending chain of saturated hereditary subsets of $\Lambda^0$ stabilises.
\end{itemize}
Then $C^*(\Lambda)$ is $\mathcal{Z}$-stable.
\end{corollary}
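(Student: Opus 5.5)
The plan is to reduce the statement to Theorem~\ref{thm:D0-stable} with $D_0=\mathcal Z$. Three things must be checked: $C^*(\Lambda)$ is separable and nuclear, it is purely infinite, and its closed ideal lattice satisfies both chain conditions. Separability is Remark~\ref{rem:dense-span}. Nuclearity comes from amenability of the path groupoid \cite{KP2,S}, exactly as in the proof of Proposition~\ref{prop:Oinfty-absorption}.

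\emph{Pure infiniteness.} The first two hypotheses are precisely condition~(\ref{pi:e}) of Theorem~\ref{thm:pure-inf}. That theorem gives (\ref{pi:e})$\Leftrightarrow$(\ref{pi:d})$\Rightarrow$(\ref{pi:a})$\Rightarrow$(\ref{pi:b}), and these implications are unconditional: no finiteness of $\Lambda^0$ is used. Hence $C^*(\Lambda)$ is purely infinite in the sense of Definition~\ref{def:pure-inf}.

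\emph{Chain conditions.} Strong aperiodicity lets us invoke Theorem~\ref{thm:gauge-invariant}: every closed ideal of $C^*(\Lambda)$ is gauge invariant. By Theorem~\ref{thm:gauge-ideals}, every ideal is therefore of the form $I_H$ with $H\in H(\Lambda)$, and $H\mapsto I_H$ is an order isomorphism between $H(\Lambda)$ and the full ideal lattice. An ascending (respectively descending) chain of closed ideals $I_1\subseteq I_2\subseteq\cdots$ thus corresponds to an ascending (respectively descending) chain $H_{I_1}\subseteq H_{I_2}\subseteq\cdots$ of saturated hereditary sets. By the third hypothesis this chain stabilises, and applying $H\mapsto I_H$ shows the ideal chain stabilises too. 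So $C^*(\Lambda)$ is both Noetherian and Artinian, and Theorem~\ref{thm:D0-stable} yields $C^*(\Lambda)\cong C^*(\Lambda)\otimes\mathcal Z$.

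\emph{Main obstacle: the standing hypotheses.} Theorems~\ref{thm:gauge-ideals}, \ref{thm:gauge-invariant} and \ref{thm:pure-inf} are stated for locally convex graphs, but the corollary only assumes row-finite with no sources. In the absence of sources, local convexity is automatic, since every $s(e)$ emits edges of every colour. I would record this explicitly, and with it the whole argument runs as above.

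A backup route is also available. One could instead observe that the chain conditions force $H(\Lambda)$ to behave like a lattice in which $\Prim$ is finite-like. Then Proposition~\ref{prop:Oinfty-absorption} gives $\mathcal O_\infty$-stability, and $\mathcal O_\infty\cong\mathcal O_\infty\otimes\mathcal Z$ would give $\mathcal Z$-stability. However, this would require establishing topological dimension zero, so the Theorem~\ref{thm:D0-stable} route is the one I would follow.
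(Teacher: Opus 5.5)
Your proposal is correct and is essentially the paper's own proof: you get pure infiniteness from condition~(\ref{pi:e}) of Theorem~\ref{thm:pure-inf}, transfer the chain conditions from $H(\Lambda)$ to the ideal lattice via Theorems~\ref{thm:gauge-invariant} and~\ref{thm:gauge-ideals}, and then apply Theorem~\ref{thm:D0-stable} with $D_0=\mathcal Z$; your observation that local convexity is automatic when there are no sources correctly fills a point the paper leaves implicit. Your hesitation about the backup route is unnecessary: a distributive lattice (such as $H(\Lambda)$, or any ideal lattice) satisfying both chain conditions is finite, so topological dimension zero is immediate, and Proposition~\ref{prop:Oinfty-absorption} together with $\mathcal O_\infty\cong\mathcal O_\infty\otimes\mathcal Z$ gives the result, as the paper's remark following that proposition indicates.
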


\begin{proof}
By Theorem~\ref{thm:pure-inf}~(5), $C^*(\Lambda)$ is purely infinite.
Since $\Lambda$ is strongly aperiodic, every ideal of $C^*(\Lambda)$ is
gauge-invariant, so by Theorem~\ref{thm:gauge-ideals} the ideals of
$C^*(\Lambda)$ correspond bijectively to the saturated hereditary subsets of
$\Lambda^0$. The hypothesis that every ascending and every descending chain of
such subsets stabilises therefore makes $C^*(\Lambda)$ both Noetherian and
Artinian.
Since $C^*(\Lambda)$ is also separable and nuclear (see, for example, \cite{KP2,S}), 
Theorem~\ref{thm:D0-stable} applies with $D_0 = \mathcal{Z}$, and we obtain 
$C^*(\Lambda) \cong C^*(\Lambda) \otimes \mathcal{Z}$.
\end{proof}

\begin{remark}
For $k=1$ these purely infinite instances are subsumed by the recent work of
Faurot \cite{Fau}, who characterises $\mathcal Z$-stability of a graph
$C^*$-algebra through Condition (K) and a combinatorial \emph{distinct
detours} condition; his emphasis, complementary to ours, is on the stably
finite (AF) branch, where $\mathcal Z$-stability is equivalent to the absence
of elementary subquotients.
\end{remark}

\subsection*{Acknowledgement of the use of AI tools}
In preparing this article the author used an AI assistant (Anthropic's Claude)
as a tool to help brainstorm connections with an earlier manuscript and to
assist with copy-editing, notational consistency, and \LaTeX{} of the resulting
document. All mathematical content, arguments, and results are the author's own;
the author reviewed, verified, and takes full responsibility for the entire text.

\subsection*{Declaration of competing interests}
The author declares that they have no competing interests.

\end{document}